 \def\@textbottom{\vskip \z@ \@plus 12pt}
 \let\@texttop\relax
\newtheorem{thm}{Theorem}
\newtheorem{lem}[thm]{Lemma}
\newtheorem{ex}[thm]{Example}
\newtheorem{defi}[thm]{Definition}
\newtheorem{algo}[thm]{Algorithm}
\newtheorem{prop}[thm]{Proposition}
\newtheorem{rk}[thm]{Remark}
\newtheorem{nota}[thm]{Notation}
\newtheorem{sett}[thm]{Setting}
\newtheorem{ass}[thm]{Assumption}
\newcommand{\rr}{{\mathbb{R}}}
\newcommand{\nn}{{\mathbb{N}}}
\newcommand{\TT}{{\mathbb{T}}}
\newcommand{\CC}{{\mathbb{C}}}
\newcommand{\HH}{{\mathbb{H}}}
\newcommand{\KK}{{\mathbb{K}}}
\newcommand{\DD}{{\mathbb{D}}}
\newcommand{\E}{\mathbb{E}}
\newcommand{\indiq}{{{\mathbf 1}}}
\newcommand{\bx}{{{\mathbf x}}}
\newcommand{\by}{{{\mathbf y}}}
\newcommand{\btx}{{\tilde {\mathbf x}}}
\newcommand{\tx}{{\tilde x}}
\newcommand{\cT}{{\mathcal T}}
\newcommand{\cC}{{\mathcal C}}
\newcommand{\cH}{{\mathcal H}}
\newcommand{\cm}{{\mathfrak m}}
\newcommand{\cs}{{\mathfrak s}}
\newcommand{\cU}{{\mathcal U}}
\newcommand{\cF}{{\mathcal F}}
\newcommand{\cG}{{\mathcal G}}
\newcommand{\cL}{{\mathcal L}}
\newcommand{\cS}{{\mathcal S}}
\newcommand{\cD}{{\mathcal D}}
\newcommand{\cK}{{\mathcal K}}
\newcommand{\cP}{{\mathcal P}}
\newcommand{\tN}{{\tilde N}}
\newcommand{\tDelta}{{\tilde \Delta}}
\newcommand{\tZ}{{\tilde Z}}
\newcommand{\bN}{{\bar N}}
\newcommand{\tR}{{\tilde R}}
\newcommand{\ts}{{\tilde s}}
\newcommand{\ctF}{\tilde {\mathcal F}}
\newcolumntype{C}[1]{>{\centering\let\newline\\\arraybackslash\hspace{0pt}}m{#1}}
\newcommand{\btp}{\begin{center}\begin{tabular}{|C{5.5cm}|C{5.5cm}|}}
\newcommand{\etp}{\end{tabular}\end{center}}
\begin{document}

\title[Monte-Carlo tree search]{On Monte-Carlo tree search for deterministic
games with alternate moves and complete information}

\author{Sylvain Delattre}

\author{Nicolas Fournier}

\address{Sylvain Delattre, Laboratoire de Probabilit\'es et Mod\`eles Al\'eatoires, UMR 7599, 
Universit\'e Paris Diderot, Case courrier 7012, 75205 Paris Cedex 13, France.}

\email{sylvain.delattre@univ-paris-diderot.fr}

\address{Nicolas Fournier, Laboratoire de Probabilit\'es et Mod\`eles Al\'eatoires, UMR 7599,
Universit\'e Pierre-et-Marie Curie, Case 188, 4 place Jussieu, F-75252 Paris Cedex 5, France.}

\email{nicolas.fournier@upmc.fr}

\begin{abstract}
We consider a deterministic game with alternate moves and complete information, of which the
issue is always the victory of one of the two opponents.
We assume that this game is the realization of a random model enjoying some independence properties.
We consider algorithms in the spirit 
of Monte-Carlo Tree Search, 
to estimate at best the minimax value of a given position: it consists in simulating,
successively, $n$ well-chosen matches, starting from this position.
We build an algorithm, which is optimal, 
step by step, in some sense: once the $n$ first matches are simulated, the algorithm decides
from the statistics furnished by the $n$ first matches (and the {\it a priori} we have on the game)
how to simulate the $(n+1)$-th match
in such a way that the increase of information concerning the minimax value of the position under study
is maximal. This algorithm is remarkably quick.
We prove that our step by step optimal algorithm is not globally optimal
and that it always converges in a finite number of steps, 
even if the {\it a priori} we have on the game is completely irrelevant.
We finally test our algorithm, against MCTS, on Pearl's game \cite{p} and, 
with a very simple and universal {\it a priori}, 
on the game Connect Four and some variants. 
The numerical results are rather disappointing. We however
exhibit some situations in which our algorithm seems efficient.
\end{abstract}

\subjclass[2010]{91A05, 68T20, 60J80}

\keywords{Monte Carlo tree search, deterministic games with alternate moves and 
complete information, minimax values,
finite random trees, Branching property}

\thanks{We warmly thank Bruno Scherrer for his invaluable help. 
We also thank the anonymous referee for his numerous fruitful comments.}

\maketitle

\section{Introduction}\label{intro}

\subsection{Monte-Carlo Tree Search algorithms} 
Monte-Carlo Tree Search (MCTS) are popular algorithms for heuristic search in two-player games.
Let us mention the  
book of Munos \cite{mlivre} and the survey paper of Browne {\it et al.} \cite{mctsurvey},
which we tried to briefly summarize here and to which we refer for a much more
complete introduction on the topic.

We consider a deterministic game with complete information and alternate moves involving two players,
that we call $J_1$ and $J_0$. We think of Go, Hex, Connect Four, etc. 
Such a game can always be represented as a discrete tree, of which the nodes
are the positions of the game. Indeed, even if a single position can be thought as the child of two different
positions, we can always reduce to this case by including the whole history of the game
in the position. Also, we assume that the only possible outcomes of the game, which are
represented by values on the leaves of the tree, are either
$1$ (if $J_1$ wins) or $0$ (if $J_0$ wins). If draw is a possible outcome, we e.g. identify it
to a victory of $J_0$.

Let $r$ be a configuration in which $J_1$ has to choose between several moves.
The problem we deal with is: how to select at best one of these moves with a computer and a given amount of time.

If having a huge amount of time, such a question can classically be completely solved by computing recursively,
starting from the leaves, the minimax 
values $(R(x))_{x \in\cT}$, see Remark \ref{mm}. Here $\cT$ is the tree (with root $r$ and set of leaves $\cL$) 
representing the game when starting from $r$, and for each $x\in\cT$, $R(x)$ is the value of the game starting 
from $x$.
In other words, $R(x)=1$ if $J_1$
has a winning strategy when starting from $x$ and $0$ else. So we compute $R(x)$ for all the children
$x$ of $r$ and choose a move leading to some $x$ such that $R(x)=1$, if such a child exists.

In practice, this is not feasible, except if the game (starting from $r$) is very small.
One possibility is to cut the tree at some reasonable depth $K$, to assign some estimated values
to all positions of depth $K$, and to compute the resulting (approximate) minimax values 
on the subtree above depth $K$.
For example if playing Connect Four, one can assign to a position the value 
\emph{remaining number of possible alignments for $J_1$ minus remaining number of possible alignments for $J_0$.}
Of course, the choice of such a value is highly debatable, 
and heavily depends on the game.

A more universal possibility, introduced by Abramson \cite{a},
is to use some Monte-Carlo simulations:
from each position with depth $K$, we handle a certain number $N$ of uniformly random matches
(or matches with a simple \emph{default policy}),
and we evaluate this position by the number of these matches that led to a victory of $J_1$ divided by $N$.
Such a procedure is now called {\it Flat MCTS}, see Coquelin and Munos \cite{cm}, Browne {\it et al.} 
\cite{mctsurvey},
see also Ginsberg \cite{g} and Sheppard \cite{s}.

Coulom \cite{c} introduced the class of MCTS algorithms.  Here are the main ideas: we have a {\it default policy}
and a {\it selection procedure}. We make $J_1$ play against $J_0$ a certain number of times
and make grow a subtree of the game. Initially, the subtree $\cT_0$ consists of the root and its children. 
After $n$ steps, we have the subtree $\cT_n$ and some statistics $(C(x),W(x))_{x \in\cT_n}$
provided by the previous matches: $C(x)$ is the number of times the node $x$ has been crossed and $W(x)$ 
is the number of times this has led to a victory of $J_1$. Then we select a leave $y$ of $\cT_n$ 
using the selection procedure (which relies on the statistics $(C(x),W(x))_{x \in\cT_n}$) and we end
the match (from $y$) by using the default policy. We then build $\cT_{n+1}$ by adding to $\cT_n$
the children of $y$, and we increment the values of $(C(x),W(x))_{x \in\cT_{n+1}}$ according to the
issue of the match (actually, it suffices
to compute these values for $x$ in the branch from $r$ to $y$ and for the children of $y$).
Once the given amount of time is elapsed, we choose the move leading to the 
child $x$ of $r$ with the highest $W(x)/C(x)$.

Actually, this procedure throws away a lot of data: $C(x)$ is not exactly the number of times $x$
has been crossed, it is rather the number of times it has been crossed since $x \in \cT_n$,
and a similar fact holds for $W(x)$. In practice, this limits the memory used by the algorithm.

The most simple and universal default policy is to choose each move at uniform random and this is the case we will
study in the present paper. It is of course more efficient to use a simplified
strategy, depending on the game under study, but this is another topic.

Another important problem is to decide how to select the leave $y$ of $\cT_n$. 
Kocsis and Szepesv\'ari \cite{ks} proposed to use some bandit ideas, developed
(and shown to be optimal, in a very weak sense, for bandit problems) by Auer {\it et al.} \cite{acbf},
see Bubeck and Cesa-Bianchi \cite{bcb} for a survey paper. 
They introduced a version of MCTS
called UCT (for UCB for trees, UCB meaning Upper Confidence Bounds), in which the selection procedure 
is as follows.
We start from the root $r$ and we go down in $\cT_n$: when in position $x$ where $J_1$ (resp. $J_0$) 
has to play, we choose the child $z$ of $x$ maximizing 
$W(z)/C(z)+\sqrt{c (\log n) / C(z)}$ (resp. $(C(z)-W(z))/C(z)+\sqrt{c (\log n) / C(z)}$).
At some time we arrive at some leave $y$ of $\cT_n$, this is the selected leave.
Here $c>0$ is a constant to be chosen empirically.
Kocsis and Szepesv\'ari \cite{ks} proved the convergence of UCT.

Chaslot {\it et al.} \cite{cwhetc} have broaden the framework of MCTS. Also, they proposed different
ways to select the best child (after all the computations): either the one with the highest $W/C$,
the one with the highest $C$, or something intermediate.

Gelly {\it et al.} \cite{mogo} experimented MCTS (UCT) on Go. They built the program MoGo, 
which also uses some pruning procedures, and obtained impressive results on a $9\times 9$ board.

The early paper of Coquelin and Munos \cite{cm} contains many results.
They showed that UCT can be inefficient on some particular trees and proposed a modification
taking into account some possible smoothness of the tree and outcomes (in some sense).
They also studied Flat MCTS.

Lee {\it et al.} \cite{teytetal} studied the problem of fitting precisely the parameters of the selection
process. Of course, $W/C$ means nothing if $C=0$, so they empirically  investigated what happens when
using $(W+a)/(C+b)+\sqrt{c (\log n) / (C+1)}$, for some constants $a,b,c>0$. 
They conclude that $c=0$ is often the best choice.
This is not so surprising, since the logarithmic term is here
to prevent from large deviation events, which do asymptotically not exist for deterministic games.
MoGo uses $c=0$ and \emph{ad hoc} constants $a$ and $b$.
This version of MCTS is the one presented in Section \ref{smcts} and the one we used to test our algorithm.

Let us mention the more recent theoretical work by Bu\c soniu, Munos and P\'all \cite{bmp},
as well as the paper
of Garivier, Kaufmann and Koolen \cite{gkk} who study in details a bandit model for a two-round
two-player random game.

The survey paper of Browne {\it et al.} \cite{mctsurvey} discusses many tricks to improve the numerical
results and, of course, all this has been adapted with very special and accurate procedures to
particular games. As everybody knows, AlphaGo \cite{alphago}
became the first Go program to beat a human professional Go player on a full-sized board.
Of course, AlphaGo is far from using only MCTS, it also relies on deep-learning and many other things.

\subsection{Our goal} We would like to study MCTS when using a probabilistic model for the game.
To simplify the problem as much as possible,
we only consider the case where the default policy is \emph{play at uniform random}, 
and we consider the modified version of MCTS described in Section \ref{smcts}, where we keep all the information. 
This may cause memory problems in practice, but we do not discuss such difficulties. 
So the modified version is as follows, for some constants $a,b>0$ to be fitted empirically. 
We make $J_1$ play against $J_0$ a certain number of times
and make a subtree of the game grow. Initially, the subtree $\cT_0$ consists in the root $r$ and its children. 
After $n$ steps, we have the subtree $\cT_n$ and some statistics $(C(x),W(x))_{x \in\cT_n}$ 
provided by the previous matches: $C(x)$ is the number of times the node $x$ has been crossed and $W(x)$ 
the number of times this has led to a victory of $J_1$. The $(n+1)$-th step is as follows: start from $r$
and go down in $\cT_n$ by following the highest values of 
$(W+a)/(C+b)$ (resp. $(C-W+a)/(C+b)$) if it is $J_1$'s turn to play (resp. $J_0$'s turn to play),
until we arrive at some leave $z$ of $\cT_n$. From there, complete the match at uniform random until
we reach a leave $y$ of $\cT$. We then build $\cT_{n+1}$ by adding to $\cT_n$ the whole branch from $z$ to $y$
together with all the brothers of the elements of this branch, and we compute the values of 
$(C(x),W(x))_{x \in\cT_{n+1}}$ (actually, it suffices
to compute these values for $x$ in the branch from $r$ to $y$).
Once the given amount of time is elapsed, we choose the move leading to the 
child $x$ of $r$ with the highest $C(x)/W(x)$.

As shown by Coquelin and Munos \cite{cm}, one can build games for which MCTS is not very efficient.
So it would be interesting to know for which class of games it is. This seems to be a very difficult problem.
One possibility is to study if MCTS works well {\it in mean}, i.e. when the game is chosen at random.
In other words, we assume that the tree and the outcomes are the realization of a random model.
We use a simple toy model enjoying some independance properties,
which is far from convincing if modeling true games but for which we can handle a complete 
theoretical study.

We consider a class of algorithms resembling the above mentioned 
modified version of MCTS. After $n$ simulated matches, we have some information $\cF_n$ on the game:
we have visited $n$ leaves, we know the outcomes of the game at these $n$ leaves, and we have the explored
tree $\cT_n=B_n\cup D_n$, where $B_n$ is the set of all crossed positions, and $D_n$ is the boundary
of the explored tree (roughly, $D_n$ consists of uncrossed positions of which the father belongs to $B_n$).

So we can approximate $R(r)$, which is the quantity of interest, by $\E[R(r)|\cF_n]$ (if the latter
can be computed).
Using only this information $\cF_n$ (and possibly some {\it a priori} on the game furnished by the model), 
how to select $z \in D_n$ so that, simulating a uniformly random match starting from $z$
and updating the information, $\E[R(r)|\cF_{n+1}]$ is as close as possible (in $L^2$) to $R(r)$?

We need a few assumptions. In words, (a) the tree and outcomes
enjoy some independence properties, (b) we can compute, at least numerically, for 
$x \in D_n$, $m(x)=$ {\it mean value of $R(x)$} and
$s(x)=$ {\it mean quantity of information that a uniformly random match starting from $x$ will provide.}
See Subsection \ref{trq} for precise definitions.

Under such conditions, we show that $\E[R(r)|\cF_n]$ can indeed be computed (numerically),
and $z$ can be selected as desired. The procedure resembles in spirit MCTS, but is of course more complicated
and requires more computations. 

This is extremely surprising: the computational cost to find the best $z \in D_n$
does not increase with $n$, because this choice requires to compute some values of which the update
does not concern the whole tree $\cT_n$, but only the last visited branch, as MCTS. (Actually, we also
need to update all the brothers of the last visited branch, but this remains rather reasonable).
It seems miraculous that this {\it theoretical} algorithm behaves so well.
Any modification, such as taking draws into account or changing the notion
of optimality, seems to lead to drastically more expensive algorithms, that require to update some values
on the whole visited tree $\cT_n$.
We believe that this is the most interesting fact of the paper.

The resulting algorithm is explained in details in the next section.
We will prove that this algorithm is convergent 
(in a finite number of steps) even if
the model is completely irrelevant. This is not very surprising, since all the leaves
of the game are visited after a finite number of steps. 
We will also prove on an example that our algorithm is {\it myopic}: it is not {\it globally} optimal.
There is a theory showing that for a class of problems, a step by step optimal
algorithm is {\it almost} globally optimal, i.e. up to some reasonable factor, see
Golovin and Kraus \cite{gk}. However, it is unclear whether this class of problems includes ours.

\subsection{Choice of the parameters}
We will show, on different classes of models, how to compute the functions $m$ and $s$ required 
to implement our algorithm. We studied essentially two possibilities.

In the first one, we assume that the tree $\cT$ is the realization of an inhomogeneous Galton-Watson tree,
with known reproduction laws, and that the outcomes of the game are the realizations of 
i.i.d. Bernoulli random variables of which the parameters depend only on the depths of the involved 
(terminal) positions. Then $m(x)$ and $s(x)$ depend only on the depth of the node $x$.
We can compute them numerically once for all, using rough statistics we have on the {\it true game} we want
to play (e.g. Connect Four), by handling a high number of uniformly random matches.

The second possibility is much more universal and adaptive and works better in practice. 
At the beginning, we prescribe that $m(r)=a$,
for some fixed $a \in (0,1)$.
Then each time a new litter $\{y_1,\dots,y_d\}$ (with father $x$) is created by the algorithm, 
we set $m(y_1)=\dots=m(y_d)=m(x)^{1/d}$ if $x$ is a position where it is $J_0$'s turn to play, and
$m(y_1)=\dots=m(y_d)=1-(1-m(x))^{1/d}$ else. Observe here that $d$ is discovered by the algorithm 
at the same time as the new litter.
Concerning $s$, we have different possibilities
(see Subsection \ref{pchoice} and Section \ref{comput}),
more or less justified from a theoretical point of view, among which 
$s(x)=1$ for all $x$ does not seem to be too bad.

The first possibility seems more realistic but works less well than the second one in practice
and requires some preliminary fitting.
The second possibility relies on a symmetry modeling consideration: assuming that all the individuals of the
new litter behave similarly necessarily leads to such a function $m$.  
This is much more universal in that once the value of $a=m(r)$ is fixed
(actually, $a=0.5$ does not seem to be worse than another value), everything can
be computed in a way not depending on the true game. Of course, the choice of $a=m(r)$
is debatable, but does not seem to be very important in practice.

\subsection{Comments and a few more references}

Our class of models generalize a lot the Pearl model \cite{p}, which simply consists of a regular
tree with degree $d$, depth $K$, with i.i.d. Bernoulli$(p)$ outcomes on the leaves.
However, we still assume a lot of independence. This is not 
fully realistic and is probably the main reason why our numerical experiments are rather disappointing.

The model proposed by Devroye-Kamoun \cite{dk} seems much more relevant,
as they introduce correlations between the outcomes as follows. 
They consider that each edge of the tree has a value
(e.g. Gaussian). The value of the leave is then $1$ if the sum of
the values along the corresponding branch is positive, and $0$ else. This more or less models that 
a player builds, little by little, his game. But from a theoretical point of view,
it is very difficult to study, because we only observe the values of the leaves, not of the edges.
So this unfortunately falls completely out of our scope. 
See also Coquelin and Munos \cite{cm} for a notion of smoothness of the tree, which seems rather
relevant from a modeling point of view.

Finally, our approach is often called {\it Bayesian}, because we have an {\it a priori} law for
the game. This has already been studied in the artificial intelligence literature.
See Baum and Smith \cite{bs} and Tesauro, Rajan and Segal \cite{trs}. Both 
introduce some conditional expectations of the minimax values
and formulas resembling \eqref{rnrec} already appear.

\subsection{Pruning}

Our algorithm automatically proceeds to some pruning, as AlphaBeta, which is a clever version to 
compute exactly the minimax values of a (small) game. 
This can be understood if reading the proof of Proposition \ref{toujconv}.
The basic idea is as follows: if we know from the information we have that
$R(x)=0$ for some internal node $x \in \cT$ and if the father $v$ of $x$ is a position where $J_0$ plays,
then there is no need to study the brothers of $x$, because $R(v)=0$.
And indeed, our algorithm will never visit these brothers. 

Some versions of MCTS with some additional pruning procedures have already been
introduced empirically. See  Gelly {\it et al.} \cite{mogo} for MoGo, as well
as many other references in \cite{mctsurvey}. It seems rather nice that our algorithm
automatically prunes and this holds even if the {\it a priori} we have on the game is completely irrelevant.
Of course, if playing a large game, this pruning will occur only near the leaves.

\subsection{Numerical experiments}

We have tested our {\it general} algorithm against some {\it general} versions
of MCTS. We would not have the least chance if using some versions of MCTS modified in
such a way that it takes into account some symmetries of a particular game, with a more
clever default policy, etc. But we hope our algorithm might also be adapted to
particular games.

Next, let us mention that our algorithm is subjected to some numerical problems, in that we have
to compute many products, that often lead numerically to $0$ or $1$. We overcome such problems
by using some logarithms, which complicates and slows down the program.

Let us now briefly summarize the results of our experiments, see Section \ref{numer}.

We empirically observed on various games 
that, very roughly, each iteration of our algorithm requires between $2$ and $4$ times more computational time 
than MCTS.

When playing Pearl's games, our algorithm seems rather competitive against MCTS 
(with a given amount of time per move), 
which is not very surprising, since our algorithm is precisely designed for such games.

We also tried to play various versions of Connect Four. Globally, we are clearly
beaten by MCTS. However, there are two situations where we win.

The first one is when 
the game is so large, or the amount of time so small, that very few iterations can be performed
by the challengers. This is quite natural, because (a) our algorithm is only optimal 
{\it step by step}, (b) it relies on some independence properties that are less and less
true when performing more and more iterations.

The second one is when the game is so small that we can hope to find the winning strategy
at the first move, and where our algorithm finds it before MCTS. We believe this is due 
to the automatic pruning.

\subsection{Comparison with AlphaBeta}
Assume that $\cT$ is a finite balanced tree, i.e. that all the nodes with the 
same depth have the same degree, and that we have some i.i.d. outcomes on the leaves. This slightly 
generalizes Pearl's game.
Then Tarsi \cite{tarsi} showed that AlphaBeta is optimal in the sense of the 
expected number of leaves necessary to perfectly find $R(r)$.
For such a game, Bruno Scherrer told us that our algorithm visits the leaves precisely 
in the same order as AlphaBeta, up to some random permutation (see also Subsection \ref{rat} 
for a rather convincing numerical indication in this direction).
The advantage is that we provide an estimated value during the whole process, while AlphaBeta produces 
nothing before it really finds $R(r)$.

This strict similarity with AlphaBeta does not hold generally, because
our algorithm takes into account the degrees of the nodes. On the one hand, a player
is happy to find a node with more possible moves than expected.
On the other hand, the value of such a node may be difficult to determine. 
So the way our algorithm takes degrees into account is complicated and not very transparent.

\subsection{Organization of the paper}

In the next section, we precisely state our main results and describe our algorithm.
Section \ref{ggcc} is devoted to the convergence proof.
In Sections \ref{prelim} and \ref{pprroo}, we establish our main result. 
Section \ref{comput} is devoted to the computation of the functions $m$ and $s$ for particular models.
In Section \ref{gof}, we show on an example that global optimality fails.
We present numerical experiments in Section \ref{numer}. In Section \ref{smcts}, we precisely describe the 
versions of MCTS and its variant we used to test our algorithms.

\section{Notation and main results}\label{ggggg}

\subsection{Notation}\label{ssn}
We first introduce once for all the whole notation we need concerning trees.

Let $\TT$ be the complete discrete ordered tree with root $r$ and infinite degree.
An element of $\TT$ is a finite word composed of letters in $\nn_*$.
The root $r$ is the empty word. If e.g. $x=n_1n_2n_3$, this means that $x$ is the
$n_3$-th child of the $n_2$-th child of the $n_1$-th child of the root.
We consider \emph{ordered trees} to simplify the presentation, but the order
will not play any role.

The depth (or generation) $|x|$ of $x\in\TT$ is the number of letters of $x$. In particular, $|r|=0$.

We say that $y \in \TT$ is the father of $x\in \TT\setminus\{r\}$ (or that $x$ is a child of $y$)
if there is $n\in\nn_*$ such that $x=yn$. We denote by $f(x)$ the father of $x$.

For $x \in \TT$, we call $\CC_x=\{y\in\TT: f(y)=x\}$ the set of all the children of $x$ and 
$\TT_x$ the whole progeny of $x$: $\TT_x$ is the subtree of $\TT$
composed of $x$, its children, its grandchildren, etc.

We say that $x,y\in\TT$ are brothers if they are different and have the same father.
For $x\in \TT\setminus\{r\}$, we denote by $\HH_x=\CC_{f(x)}\setminus\{x\}$ the set of all the brothers of $x$.
Of course, $\HH_r=\emptyset$.

For $x\in \TT$ and $y \in \TT_x$, we denote by $B_{xy}$ is the branch from $x$ to $y$.
In other words, $z\in B_{xy}$ if and only if $z\in \TT_x$ and $y \in \TT_z$.

For $x\in\TT$, we introduce $\KK_x=\{r\}\cup \bigcup_{y\in B_{rx}\setminus \{x\}} \CC_y=
\cup_{y\in B_{rx}} (\{y\}\cup \HH_y)$, which consists of $x$, its brothers, its father and uncles, its grandfather 
and granduncles, etc.

For $\bx \subset \TT$, we introduce $B_\bx=\cup_{x\in\bx} B_{rx}$, the finite subtree
of $\TT$ with root $r$ and set of leaves $\bx$. 

For $\bx \subset \TT$, we also set $\DD_\bx=(\cup_{x\in B_\bx} \HH_x)\setminus B_\bx$, 
the set of all the brothers of the elements of $B_\bx$
that do not belong to $B_\bx$. Observe that $B_\bx \cup \DD_\bx=\cup_{x\in\bx}\KK_x$.

Let $\cS_f$ be the set of all finite subtrees of $\TT$ with root $r$. For $T$ a finite 
subset of $\TT$,
it holds that $T\in\cS_f$ if and only if for all $x\in T$, $B_{rx}\subset T$. 

For $T\in\cS_f$ and  $x\in T$, we introduce $C_x^T=T\cap\CC_x$ the set of the children of $x$ in $T$,
$H^T_x=T\cap \HH_x$ the set of the brothers of $x$ in $T$,
$T_x=T\cap\TT_x$ the whole progeny of $x$ in $T$, and $K^T_x=T\cap \KK_x$ which contains $x$, its brothers (in $T$),
its father and uncles (in $T$), its grandfather and granduncles (in $T$), etc. See Figure 1.

We denote by $L_T=\{x\in T :T_x=\{x\} \}$ the set of the leaves of $T\in\cS_f$. We have $B_{L_T}=T$.

Finally, for $T\in\cS_f$ and  $\bx\subset T$, we introduce $D_\bx^T=T\cap \DD_\bx$, the set of all the brothers
(in $T$) of the elements of $B_\bx$ not belonging to $B_\bx$ (observe that $B_\bx\subset T$),
see Figure 2.
It holds that $B_\bx \cup D^T_\bx= \cup_{x \in \bx} K^T_x$.

\begin{figure}[b]
\setlength{\unitlength}{0.7mm}
\begin{center}\framebox{
\begin{picture}(100,80)
\put(49.5,67){$r$}
\put(50,65){\line(-32,-15){32}}\put(50,65){\line(0,-1){15}}\put(50,65){\line(32,-15){32}}
\put(18,50){\line(-10,-15){10}}\put(18,50){\line(10,-15){10}}   
\put(50,50){\line(-10,-15){10}}\put(50,50){\line(0,-15){15}}\put(50,50){\line(10,-15){10}}   
\put(82,50){\line(-10,-15){10}}\put(82,50){\line(10,-15){10}}
\put(8,35){\line(-4,-15){4}}\put(8,35){\line(4,-15){4}}
\put(28,35){\line(-4,-15){4}}\put(28,35){\line(4,-15){4}}
\put(40,35){\line(-4,-15){4}}\put(40,35){\line(0,-15){15}}\put(40,35){\line(4,-15){4}}
\put(60,35){\line(-4,-15){4}}\put(60,35){\line(-1,-15){1}}\put(60,35){\line(1,-15){1}}\put(60,35){\line(4,-15){4}}
\put(92,35){\line(-4,-15){4}}\put(92,35){\line(0,-15){15}}\put(92,35){\line(4,-15){4}}
\put(32,20){\line(-2,-15){2}}\put(32,20){\line(2,-15){2}}
\put(44,20){\line(-2,-15){2}}\put(44,20){\line(2,-15){2}}
\put(92,20){\line(-2,-15){2}}\put(92,20){\line(2,-15){2}}
\put(46,19){$x$}
\linethickness{2pt}
\put(50,65){\line(-32,-15){32}}\put(50,65){\line(0,-1){15}}\put(50,65){\line(32,-15){32}}
\put(50,50){\line(-10,-15){10}}\put(50,50){\line(0,-15){15}}\put(50,50){\line(10,-15){10}}  
\put(40,35){\line(-4,-15){4}}\put(40,35){\line(0,-15){15}}\put(40,35){\line(4,-15){4}}
\put(0,75){\small {\bf Figure 1.} The subtree $K_x^T$ of $T$ is thick.}
\end{picture}}
\framebox{
\begin{picture}(100,80)
\put(49.5,67){$r$}
\put(50,65){\line(-32,-15){32}}\put(50,65){\line(0,-1){15}}\put(50,65){\line(32,-15){32}}
\put(18,50){\line(-10,-15){10}}\put(18,50){\line(10,-15){10}}   
\put(50,50){\line(-10,-15){10}}\put(50,50){\line(0,-15){15}}\put(50,50){\line(10,-15){10}}   
\put(82,50){\line(-10,-15){10}}\put(82,50){\line(10,-15){10}}
\put(8,35){\line(-4,-15){4}}\put(8,35){\line(4,-15){4}}
\put(28,35){\line(-4,-15){4}}\put(28,35){\line(4,-15){4}}
\put(40,35){\line(-4,-15){4}}\put(40,35){\line(0,-15){15}}\put(40,35){\line(4,-15){4}}
\put(60,35){\line(-4,-15){4}}\put(60,35){\line(-1,-15){1}}\put(60,35){\line(1,-15){1}}\put(60,35){\line(4,-15){4}}
\put(92,35){\line(-4,-15){4}}\put(92,35){\line(0,-15){15}}\put(92,35){\line(4,-15){4}}
\put(32,20){\line(-2,-15){2}}\put(32,20){\line(2,-15){2}}
\put(44,20){\line(-2,-15){2}}\put(44,20){\line(2,-15){2}}
\put(92,20){\line(-2,-15){2}}\put(92,20){\line(2,-15){2}}
\put(46,19){$x_1$}\put(95,4){$x_2$}
\put(36,20){\circle*{2}}\put(40,20){\circle*{2}}\put(50,35){\circle*{2}}\put(60,35){\circle*{2}}
\put(72,35){\circle*{2}}\put(88,20){\circle*{2}}\put(96,20){\circle*{2}}
\put(90,5){\circle*{2}}\put(18,50){\circle*{2}}
\put(0,75){\small {\bf Figure 2.} $D_{\{x_1,x_2\}}^T$ consists of the bullets.}
\end{picture}}\end{center}
\end{figure}

\subsection{The general model}
We have two players $J_0$ and $J_1$. The game is modeled by a finite tree
$\cT \in \cS_f$ with root $r$ and leaves $\cL=L_\cT$. An element $x\in \cT$ represents a 
configuration of the game.
On each node $x\in \cT\setminus \cL$, we set $t(x)=1$ if it is $J_1$'s turn to play when in 
the configuration
$x$ and $t(x)=0$ else.
The move is alternate and the player $J_1$ starts, so that
we have $t(x)=\indiq_{\{|x| \hbox{ \tiny is even}\}}$, where $|x|$ 
is the depth of $x$.
We have some outcomes $(R(x))_{x\in \cL}$ in $\{0,1\}$.
We say that $x\in \cL$ is a winning outcome for $J_1$ (resp. $J_0$) if $R(x)=1$ (resp. $R(x)=0$).

So $J_1$ starts, he chooses a node $x_1$ among the children of $r$, then $J_0$ 
chooses a node $x_2$
among the children of $x_1$, and so on, until we reach a leave $y\in \cL$, and $J_1$ is the winner
if $R(y)=1$, while $J_0$ is the winner if $R(y)=0$.

\subsection{Notation} For $x\in\cT$, we set $\cC_x=C_x^{\cT}$, $\cH_x=H_x^\cT$ and $\cK_x=K_x^\cT$
and, for $\bx\subset \cT$, $\cD_\bx=D_\bx^\cT$.

\subsection{Minimax values}

Given the whole tree $\cT$ and the outcomes $(R(x))_{x\in \cL}$, we can theoretically completely 
solve the game.
We classically define the minimax values $(R(x))_{x\in \cT}$ as follows. For any $x\in\cT$,
$R(x)=1$ if $J_1$ has a winning strategy when starting from $x$ and $R(x)=0$ else (in which case
$J_0$ necessarily has a winning strategy when starting from $x$).

\begin{rk}\label{mm}
For $x\in \cL$, the value of $R(x)$ is prescribed. For $x\in \cT\setminus\cL$,
\begin{equation}\label{rr}
R(x)= \indiq_{\{t(x)=0\}} \min\{R(y):y \in \cC_x\}+
\indiq_{\{t(x)=1\}} \max\{R(y):y  \in \cC_x\}.
\end{equation}
It is thus possible to compute $R(x)$ for all $x\in\cT$ by backward induction, starting from the leaves.
\end{rk}

This is easily checked: for
$x\in \cT$ with $t(x)=0$, we have $R(x)=0$ if $x$ has at least one child $y\in\cT$ such that $R(y)=0$
(because $J_0$ can choose $y$ from which $J_1$ has no winning strategy)
and $R(x)=1$ else (because any choice of $J_0$ leads to a position $y$ from which $J_1$
has a winning strategy). This rewrites $R(x)= \min\{R(y):y \in \cC_x\}$.

If now $t(x)=1$, then $R(x)=1$ if $x$ has at least one child $y\in\cT$ such that $R(y)=1$ (because $J_1$
can choose $y$, from where it has a winning strategy) and  $R(x)=0$ else (because any choice
of $J_1$ leads to a position from which he has no winning strategy). This can be rewritten as 
$R(x)=\max\{R(y):y \in\cC_x\}$.

\subsection{The goal}

Our goal is to estimate at best $R(r)$
with a computer and a given amount of time. 

In practice, we (say, $J_1$) are playing at some true game 
such as \emph{Connect Four} or any deterministic game with alternate moves, 
against a true opponent (say, $J_0$). As already mentioned in the introduction, 
we may always consider that such a 
game is represented by a tree, and we may remove draws by identifying them to victories of $J_0$ (or of
$J_1$).

We are in some given configuration (after a certain number of 
true moves of both players). We call this configuration $r$.
We have to decide between several possibilities.
We thus want to estimate at best from which of these possibilities there is a winning 
strategy for $J_1$.
In other words, given a position $r$ (which will be the root of our tree), we want 
to know at best $R(r)=\max\{R(y):y\in \cC_r\}$: if our estimate suggests that $R(r)=0$,
then any move is similarly desperate. If our estimate suggests that $R(r)=1$, this necessarily relies
on the fact that we think that some (identified)
child $y_0$ of $r$ satisfies $R(y_0)=1$. Then in the true game, we will play $y_0$.

Of course, except for very \emph{small} games, it is not possible in practice to compute 
$R(r)$ as in Remark \ref{mm},
because the tree is too large. 

The computer knows nothing about the true game
except the rules: when it sees a position (node) $x \in\cT$,
it is able to decide if $x$ is terminal position (i.e. $x\in\cL$) or not;
if $x$ is a terminal position, it knows the outcome (i.e. $R(x)$);
if $x$ is not a terminal position, it knows who's turn it is to play (i.e. $t(x)$) 
and the possible moves (i.e. $\cC_x$).

The true game is deterministic and our study does not apply \emph{at all} to games of chance such as 
\emph{Backgammon} (because games of chance are more difficult to represent as trees, their minimax values 
or not clearly well-defined, etc.).
However, it is a very large game and, in some sense, unknown, 
so one might hope it resembles
the realization of a random model. We will thus assume that $\cT$, as well as the outcomes
$(R(x))_{x\in\cL}$, are given by the realization of some random model satisfying some independence properties.
It is not clear whether such an assumption is reasonable.
In any case, our theoretical results completely break down without such a condition.

\subsection{A class of algorithms}

We consider a large class of algorithms resembling the Monte Carlo Tree Search algorithm,
of which a version is recalled in details in the Appendix.
The idea is to make $J_1$ play against $J_0$ a large number of times: the first match is completely random,
but then we use the statistics of the preceding matches. MCTS makes $J_1$
and $J_0$ rather play some moves that often led them to victories. At the end, this provides
some ratings for the children of $r$. In the true game, against the true opponent, we then play
the move leading to the child of $r$ with the highest rating.

\begin{defi} 
We call a \emph{uniformly random match} starting from $x\in \cT$, with $y\in\cL$ 
as a \emph{resulting leave}, the following procedure.
Put $y_0=x$. If $y_0\in\cL$, set $y=y_0$.
Else, choose $y_1$ uniformly among $\cC_{y_0}$. If $y_1 \in \cL$, set $y=y_1$.
Else, choose $y_2$ uniformly among $\cC_{y_1}$. If $y_2 \in \cL$, set $y=y_2$. 
Etc.
Since $\cT$ is finite, this always ends up.
\end{defi}

The class of algorithms we consider is the following. 

\begin{defi}\label{proc}
An \emph{admissible algorithm} is a procedure of the following form.

{\bf Step 1.} 
Simulate a uniformly random match from $r$, call $x_1$ the resulting leave
and set $\bx_1=\{x_1\}$. Keep track of $R(x_1)$, of $B_{\bx_1}=B_{rx_1}$ and of 
$\cD_{\bx_1}=\cup_{y\in B_{rx_1}}\cH_y$.

{\bf Step n+1.} Using only the knowledge of $B_{\bx_n}$,
$\cD_{\bx_n}$ and $(R(x))_{x\in \bx_n}$,
choose some (possibly randomized) $z_n\in \cD_{\bx_n}\cup \bx_n$.

If $z_n\in\bx_n$, set $x_{n+1}=z_n$, $\bx_{n+1}=\bx_n$, $B_{\bx_{n+1}}=B_{\bx_n}$ and $\cD_{\bx_{n+1}}=\cD_{\bx_n}$.

If $z_n \in \cD_{\bx_n}$, simulate a uniformly 
random match starting from $z_n$ and call $x_{n+1}$ the resulting leave. Set $\bx_{n+1}=\bx_n\cup\{x_{n+1}\}$
and keep track of $R(x_{n+1})$, of 
$B_{\bx_{n+1}}=B_{\bx_n}\cup B_{rx_{n+1}}$ and of $\cD_{\bx_{n+1}}=(\cD_{\bx_n}\setminus \{z_n\}) \cup \bigcup_{y\in B_{z_nx_{n+1}}
\setminus \{z_n\}}\cH_y$.

{\bf Conclusion.} Stop after a given number of iterations $n_0$ 
(or after a given amount of time). Choose some (possibly randomized) 
\emph{best child} $x_*$ of $r$, using 
only the knowledge of
$B_{\bx_{n_0}}$, $\cD_{\bx_{n_0}}$ and $(R(x))_{x\in \bx_{n_0}}$.
\end{defi}

After $n$ matches, $B_{\bx_n}$ represents the explored part of $\cT$, while 
$\cD_{\bx_n}$ represents its boundary.

Note that we assume we that know $\cD_{\bx_n}$ (the set of all the brothers, in $\cT$, of the elements of $B_{\bx_n}$
that are not in $B_{\bx_n}$) after the simulation some matches 
leading to the set of leaves
$\bx_n$. This is motivated by the following reason.
Any $y\in\cD_{\bx_n}$ has its father in $B_{\bx_n}$. Thus at 
some point of the simulation,
we visited $f(y)$ for the first time and we had to decide (at random)
between all its children: we are aware of the fact that $y \in\cT$.

Also note that we assume that the best thing to do, when visiting a position for the first time
(i.e. when arriving at some element of $\cD_{\bx_n}$), is to simulate from there 
a uniformly random match.
This models that fact that we know nothing of the game, except the rules.

The randomization will allow us, in practice, 
to make some \emph{uniform} choice in case of equality.

Finally, it seems stupid to allow $x_{n+1}$ to belong to $\bx_n$,
because this means we will simulate \emph{precisely} 
a match we have already simulated: this will not give us some new information.
But this avoids many useless discussions. Anyway, a \emph{good} algorithm will always, 
or almost always,
exclude such a possibility.

\begin{rk}\label{tasoeur}
(i) In \emph{Step n+1}, by \emph{``using only the knowledge of $B_{\bx_n}$, 
$\cD_{\bx_n}$ and $(R(x))_{x\in \bx_n}$
choose some (possibly randomized) $z_n\in \cD_{\bx_n}\cup \bx_n$''}, we mean that 
$z_n=F(B_{\bx_n},\cD_{\bx_n}, (R(x))_{x\in \bx_n}, X_{n})$,
where $X_{n}\sim \cU([0,1])$ is independent of everything else and where $F$ is a
deterministic measurable application from $A$ to $\TT$, 
where $A$ is the set of all 
$w=(B,D,(\rho(x))_{x\in \bx},u)$, with $B\in \cS_f$, with $\bx=L_B$, with $D\subset \DD_{\bx}$
finite, with $(\rho(x))_{x\in \bx}\in \{0,1\}^{\bx}$, and with $u\in [0,1]$, such that $F(w) \in \bx\cup D$.

(ii) In \emph{Conclusion}, by \emph{``choose some (possibly randomized) 
\emph{best child} $x_*$ of $r$, using
only the knowledge of
$B_{\bx_{n_0}}, \cD_{\bx_{n_0}}, (R(x))_{x\in \bx_{n_0}}$''}, we  mean that 
$x_*=G(B_{\bx_{n_0}},\cD_{\bx_{n_0}}, (R(x))_{x\in \bx_{n_0}},X_{n_0})$,
where $X_{n_0}\sim \cU([0,1])$ is independent of everything else and where $G$ is a
deterministic application from $A$ to $\TT$ such that, for $w\in A$ as above, 
$G(w) \in C_r^{B\cup D}$.

(iii) The two applications $F,G$ completely characterize an admissible
algorithm.
\end{rk}

\subsection{Assumption}

Except for the convergence of our class of algorithms, the proof of which being purely 
deterministic, we will suppose at least the following condition.

\begin{ass}\label{as}
The tree $\cT$ is a random element of $\cS_f$. We denote by $\cL=L_\cT$ and, as already mentioned,
we set $\cC_x=C_x^{\cT}$, $\cH_x=H_x^\cT$ and $\cK_x=K_x^\cT$ for $x\in\cT$ and
$\cD_\bx=D_\bx^\cT$ for $\bx\subset \cT$.
Conditionally on $\cT$, we have some random outcomes $(R(x))_{x\in\cL}$ in $\{0,1\}$. 
We assume that for any 
$T\in\cS_f$ with leaves $L_T$, the family
$$
( (\cT_x,(R(y))_{y \in \cL \cap \cT_x}    ) , x\in L_T)
$$
is independent conditionally on $A_T=\{T\subset \cT$ and $\cD_{L_T} =\emptyset\}$ as soon as $\Pr(A_T)>0$.
\end{ass}

Observe that $A_T=\{T\subset \cT$ and $x\in T$ implies $\cH_x\subset T\}= 
\{T\subset \cT$ and $\forall\,x\in T$, $\cK_x=K^T_x\}$.

This condition is a branching property: knowing $A_T$, i.e. knowing that $T\subset \cT$ and that all the brothers
(in $\cT$) of $x\in T$ belong to $T$, we can write $\cT=T\cup \bigcup_{x\in L_T} \cT_x$, 
and the family $( (\cT_x,(R(y))_{y \in \cL \cap \cT_x}    ) , x\in L_T)$ is independent.
A first consequence is as follows.

\begin{rk}\label{ts}
Suppose Assumption \ref{as}. For $T\in\cS_f$ such that $\Pr(A_T)>0$ and $z \in L_T$,
we denote by $G_{T,z}$ the law of $(\cT_z,(R(y))_{y \in \cL \cap \cT_z})$ conditionally on $A_T$.
We have $G_{T,z}=G_{K^T_z,z}$.
\end{rk}

Indeed, put $S=K^T_z \subset T$ and observe that $A_T=A_S \cap \bigcap_{x\in L_S\setminus\{z\}} A'_x$,
where we have set $A'_x=\{T_x \subset \cT_x,\cD_{L_T}\cap \TT_x=\emptyset\}$.
But for each $x\in  L_S\setminus\{z\}$, $A'_x \in \sigma(\cT_x)$.
It thus follows from Assumption \ref{as} that $(\cT_z,(R(y))_{y \in \cL \cap \cT_z})$
is independent of $\bigcap_{x\in L_S\setminus\{z\}} A'_x$ knowing $A_S$. 
Hence its law knowing $A_T$ is the same as knowing $A_S$.

Assumption \ref{as} is of course satisfied if $\cT$ is deterministic and if the family $(R(x))_{x\in \cL}$ 
is independent.
It also holds true if $\cT$ is an inhomogeneous Galton-Watson tree and if, conditionally 
on $\cT$, the family 
$(R(x))_{x\in \cL}$ is independent and (for example) 
$R(x)$ is Bernoulli with some parameter depending only on the depth $|x|$.
But there are many other possibilities, see Section \ref{comput} for precise examples of models 
satisfying Assumption \ref{as}.

\subsection{Two relevant quantities}\label{trq}

Here we introduce two mean quantities necessary to our study.

\begin{defi}\label{dfms}
Suppose Assumption \ref{as}. Let $T\in\cS_f$ such that $\Pr(A_T)>0$, and $z \in L_T$.
Observe that on $A_T$, $z\in \cT$.

(i) We set $m(T,z)=\Pr(R(z)=1|A_T)$. By Remark \ref{ts},  $m(T,z)=m(K^T_z,z)$, because
$R(z)$ is of course a deterministic function of $(\cT_z,(R(y))_{y \in \cL \cap \cT_z})$,
see Remark \ref{mm}.

(ii) Simulate a uniformly random match starting from $z$, 
denote by $y$ the resulting leave. We put
$\cK_{zy}=\cK_y\cap\TT_z$ and introduce $\cG=\sigma(y,\cK_{zy},R(y))$.
We set
$$
s(T,z)=\E\Big[\Big(\Pr(R(z)=1\vert\cG \lor \sigma(A_T))
-m(T,z)\Big)^2 \Big\vert A_T \Big].
$$
By Remark \ref{ts},  $s(T,z)=s(K^T_z,z)$.
\end{defi}

Since $\E[\Pr(R(z)=1\vert\cG \lor \sigma(A_T))|A_T]=m(T,z)$, $s(T,z)$ is a conditional variance.

We will see in Section \ref{comput} that for some particular classes of 
models, $m$ and $s$ can be computed.

Recall that our goal is to produce some admissible algorithm.
Assume we have explored $n$ leaves $x_1,\dots,x_n$ and set $\bx_n=\{x_1,\dots,x_n\}$.
Recall that $B_{\bx_n}$ is the explored tree and that $\cD_{\bx_n}$ is, in some sense, its boundary.
For $z\in \bx_n$, we perfectly know $R(z)$. But for $z \in \cD_{\bx_n}$, we only know that
$z \in \cT$ and we precisely know $\cK_z$, since $\cK_z =K^\cT_z=K^{B_{\bx_n}\cup\cD_{\bx_n}}_z$.
Thus the best thing we can say is that
$R(z)$ equals $1$ with (conditional) probability 
$m(\cK_z,z)$. Also, $s(\cK_z,z)$ 
quantifies some mean amount of information we will get if
handling a uniformly random match starting from $z$.

\subsection{The conditional minimax values}

From now on, we work with the following setting.

\begin{sett}\label{n1}
Fix $n\geq 1$. Using an \emph{admissible} algorithm,
we have \emph{simulated} $n$ matches, leading to the leaves
$\bx_n=\{x_1,\dots,x_n\}\subset \cL$. Hence the $\sigma$-field
$\cF_n=\sigma(\bx_n, \cD_{\bx_n}, (R(x))_{x\in\bx_n})$ represents our knowledge
of the game. Observe that $B_{\bx_n}$ is of course $\cF_n$-measurable. Also, for any $x \in B_{\bx_n}\cup\cD_{\bx_n}$, 
$\cK_x$ and $\cH_x$ are $\cF_n$-measurable, as well as $\cC_x$ if $x\in B_{\bx_n}\setminus\{\bx_n\}$.
\end{sett}

This last assertion easily follows from the fact that for any $\bx\subset \cT$,
any element of $B_{\bx}\cup\cD_{\bx}$ has all its brothers (in $\cT$) in $B_{\bx}\cup\cD_{\bx}$.

We first want to compute $R_n(r)=\E[R(r)\vert \cF_n]=\Pr(R(r)=1 \vert \cF_n)$, 
which is, in some obvious sense,
the best approximation of $R(r)$ knowing $\cF_n$. Of course, we will have to compute
$R_n(x)$ on the whole explored subtree of $\cT$. We will check the following
result in Section \ref{pprroo}.

\begin{prop}\label{rn}
Grant Assumption \ref{as} and Setting \ref{n1}.
For all $x\in B_{\bx_n}\cup\cD_{\bx_n}$, define $R_n(x)=\Pr(R(x)=1 \vert \cF_n)$. 
They can be computed by backward induction,
starting from ${\bx_n}\cup\cD_{\bx_n}$, as follows.
For all $x\in {\bx_n}$, $R_n(x)=R(x)$. For all $x\in \cD_{\bx_n}$, 
$R_n(x)=m(\cK_x,x)$. For all
$x \in B_{\bx_n}\setminus{\bx_n}$, 
\begin{equation}\label{rnrec}
R_n(x)=\indiq_{\{t(x)=0\}} \prod_{y\in \cC_x} R_n(y) 
+ \indiq_{\{t(x)=1\}} \Big(1- \prod_{y\in \cC_x} (1-R_n(y))\Big).
\end{equation}
\end{prop}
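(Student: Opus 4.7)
Set $T_n := B_{\bx_n}\cup\cD_{\bx_n}$, which is $\cF_n$-measurable with $L_{T_n}=\bx_n\cup\cD_{\bx_n}$. The plan is to verify the three cases of the backward induction ($x\in\bx_n$, $x\in\cD_{\bx_n}$, and $x\in B_{\bx_n}\setminus\bx_n$) by combining Assumption \ref{as} and Remark \ref{ts} with the deterministic minimax formula of Remark \ref{mm}.

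The case $x\in\bx_n$ is immediate since $R(x)$ is then $\cF_n$-measurable. For $x\in\cD_{\bx_n}$, the goal is to show that the conditional law of $(\cT_x,(R(y))_{y\in\cL\cap\cT_x})$ given $\cF_n$ equals $G_{\cK_x,x}$, which by Definition \ref{dfms}(i) yields $R_n(x)=m(\cK_x,x)$. The key points are: (a) $A_{T_n}$ is $\cF_n$-measurable (since $T_n$ is) and, on $A_{T_n}$, Assumption \ref{as} makes the family $\bigl((\cT_z,(R(y))_{y\in\cL\cap\cT_z})\bigr)_{z\in L_{T_n}}$ independent; (b) the additional information in $\cF_n$ beyond $A_{T_n}$ consists of the values $(R(z))_{z\in\bx_n}$ (together with the trivial data $\cT_z=\{z\}$ for $z\in\bx_n$) and of the auxiliary algorithmic randomness, each piece of which either concerns only the subtree rooted at some $z\in\bx_n\subset L_{T_n}$ or is independent of $(\cT,R)$. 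By (a), this additional information is conditionally independent, given $A_{T_n}$, of $(\cT_x,(R(y))_{y\in\cL\cap\cT_x})$ for $x\in\cD_{\bx_n}$, so the conditional law of the latter given $\cF_n$ agrees with that given $A_{T_n}$, which by Remark \ref{ts} is $G_{T_n,x}=G_{\cK_x,x}$ (using $\cK_x\subset T_n$ for $x\in\cD_{\bx_n}$).

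For $x\in B_{\bx_n}\setminus\bx_n$, since $R(y)\in\{0,1\}$ the minimax relation \eqref{rr} rewrites as
\[
R(x)=\indiq_{\{t(x)=0\}}\prod_{y\in\cC_x}R(y)+\indiq_{\{t(x)=1\}}\Bigl(1-\prod_{y\in\cC_x}\bigl(1-R(y)\bigr)\Bigr),
\]
and $\cC_x$ and $t(x)$ are $\cF_n$-measurable. Taking $\E[\,\cdot\,|\cF_n]$ then yields \eqref{rnrec} as soon as $(R(y))_{y\in\cC_x}$ are conditionally independent given $\cF_n$; this again follows from (a)--(b), because each $R(y)$ is a function of $(\cT_z,(R(w))_{w\in\cL\cap\cT_z})_{z\in L_{T_n}\cap\cT_y}$ and these index sets are pairwise disjoint as $y$ ranges over $\cC_x$.

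The main obstacle is the precise content of (b): rigorously showing that the algorithm has never probed any $\cT_y$ with $y\in\cD_{\bx_n}$, so that $\cF_n$ indeed splits, modulo $A_{T_n}$, as a function of independent auxiliary randomness and of the trivial subtrees $\bigl((\{z\},R(z))\bigr)_{z\in\bx_n}$. This reduces to a combinatorial verification on the update rule of Definition \ref{proc}: one checks inductively in $k$ that a node $y\in\cD_{\bx_k}$ either is chosen as $z_k$ (and then moves to $B_{\bx_{k+1}}\setminus\bx_{k+1}$) or stays in $\cD_{\bx_{k+1}}$. Hence $y\in\cD_{\bx_n}$ forces $y$ never to have been selected, and moreover no past match path can have visited a strict descendant of $y$ (such a path stays within $\cT_{z_k}$ for its starting point $z_k\in\cD_{\bx_k}$, and $\cD_{\bx_k}\cap B_{\bx_k}=\emptyset$ prevents visiting below $y$ without first selecting or crossing it).
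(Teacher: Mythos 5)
Your overall architecture matches the paper's: the first case is trivial, and the other two reduce to showing (i) that the conditional law of $(\cT_x,(R(y))_{y\in\cL\cap\cT_x})$ given $\cF_n$ is $G_{\cK_x,x}$ for $x\in\cD_{\bx_n}$, and (ii) that the family indexed by $\cC_x$ is conditionally independent given $\cF_n$. These are exactly the two parts of the paper's Lemma \ref{cru}, which in turn rests on Lemma \ref{cruzero} (the statement that conditioning on the algorithmic event $\{\bx_n=\by_n,\cD_{\by_n}=D_n,R|_{\by_n}=a\}$ is the same as conditioning on the purely structural event $\{\by_n\subset\cL,\cD_{\by_n}=D_n,R|_{\by_n}=a\}$). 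You correctly identify this identification as ``the main obstacle,'' but the argument you offer for it is incomplete.

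The gap is in your point (b). The event $\{\bx_n=\by_n\}$ is \emph{not} a function of pieces that are each either independent of $(\cT,R)$ or measurable with respect to a subtree rooted at some $z\in\bx_n$. The event that the $k$-th uniformly random match, started from $z_k$, reaches the leaf $y_{k+1}$ depends jointly on the auxiliary uniforms \emph{and} on the degrees $|\cC_u|$ of the internal nodes $u\in B_{z_k y_{k+1}}\setminus\{y_{k+1}\}$ of the tree; these are internal nodes of $B_{\by_n}$, not subtrees hanging off $\bx_n$. Your combinatorial verification (that no match path ever enters $\TT_y\setminus\{y\}$ for $y\in\cD_{\bx_n}$) is a necessary ingredient, but it does not by itself yield the conditional-law identification. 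What is additionally needed — and what constitutes the real content of the paper's Lemma \ref{cruzero} — is that the likelihood of the observed trajectory given $(\cT,R)$ equals $\prod_u |C^\cT_u|^{-1}$ over the visited branches, and that every degree appearing in this product is \emph{revealed} by $\cD_{\bx_n}$ (i.e.\ $|C^\cT_u|=|C^{B_{\by_n}\cup D_n}_u|$ on the structural event), so the likelihood is constant on that event and the two conditionings coincide. The paper explicitly flags this as ``intuitively obvious, but we found no short proof'' and devotes a two-step induction to it; your proposal should carry out (or at least state precisely) this likelihood-factorization step rather than reduce the problem to path locality alone.
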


\subsection{Main result}

We still work under Setting \ref{n1}.
We want to simulate a $(n+1)$-th match. We thus want to choose
some $z \in\cD_{\bx_n}$
and then simulate a uniformly random match starting from $z$,
in such a way that the increase of information
concerning $R(r)$ is as large as possible.
We unfortunately need a few more notation.

\begin{nota}\label{n2}
Adopt Setting \ref{n1}. 

(i) For $x\in (B_{\bx_n}\cup\cD_{\bx_n})\setminus\{r\}$, we set 
\begin{equation}\label{dfu}
U_n(x)=\indiq_{\{t(f(x))=0\}}\prod_{y\in \cH_x} R_n(y) 
+ \indiq_{\{t(f(x))=1\}} \prod_{y\in \cH_x} (1-R_n(y)).
\end{equation}

(ii) Define $Z_n(x)$, for all $x\in B_{\bx_n}\cup\cD_{\bx_n}$, by backward induction, 
starting from $\bx_n\cup \cD_{\bx_n}$,
as follows. If $x\in {\bx_n}$, set $Z_n(x)=0$. If $x\in \cD_{\bx_n}$, set 
$Z_n(x)=s(\cK_x,x)$. If $x \in B_{\bx_n}\setminus\bx_n$, set
\begin{equation}\label{dfz}
Z_n(x)=\max \{(U_n(y))^2Z_n(y) : y\in\cC_x \}.
\end{equation}

(iii) Fix $z\in \cD_{\bx_n}$, 
handle a uniformly random match starting from $z$, with resulting leave $y_z$,
set $\bx_{n+1}^z= {\bx_n}\cup\{y_z\}$
and denote by $\cF_{n+1}^z=\sigma(\bx_{n+1}^z,\cD_{\bx_{n+1}^z},(R(x))_{\bx_{n+1}^z})$
the resulting knowledge. Set $R_{n+1}^z(x)=\Pr(R(x)=1 \vert \cF_{n+1}^z)$
for all $x\in B_{\bx_{n+1}^z}\cup\cD_{\bx_{n+1}^z}$.
\end{nota}

Our main result reads as follows.

\begin{thm}\label{mr} Suppose Assumption \ref{as} and adopt Setting \ref{n1} 
and Notation \ref{n2}.
Define $z_* \in \cD_{\bx_n}\cup{\bx_n}$ as follows. Put $y_0=r$ and set
$y_1=\arg\!\max\{(U_n(y))^2Z_n(y):y \in \cC_{y_0}\}$. 
If $y_1 \in \cD_{\bx_n}\cup{\bx_n}$, set $z_*=y_1$.
Else, put $y_2=\arg\!\max\{(U_n(y))^2Z_n(y):y\in\cC_{y_1}\}$. 
If $y_2 \in \cD_{\bx_n}\cup{\bx_n}$, 
set $z_*=y_2$.
Else, put $y_3=\arg\!\max\{(U_n(y))^2Z_n(y):y \in \cC_{y_2}\}$, etc. This procedure
necessarily stops since $\cT$ is finite. Each time we use $\arg\!\max$, 
we choose e.g. at uniform
random in case of equality.

On the event $\{R_n(r)\notin\{0,1\}\}$, we have $z_*\in\cD_{\bx_n}$ and
\begin{equation}\label{super}
z_*=\arg\!\min \Big\{ \E\Big[(R_{n+1}^z(r)-R(r))^2\Big\vert \cF_n\Big] : z\in \cD_{\bx_n} \Big\}.
\end{equation}
\end{thm}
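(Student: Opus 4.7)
The plan is to reduce the minimization in~\eqref{super} to the tree-structured maximization encoded by $Z_n$, via the standard \emph{explained variance} identity for nested $\sigma$-fields and a telescoping of the value update up the branch from $z$ to $r$.

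First, since $\cF_n\subset\cF_{n+1}^z$ and $\E[R(r)-R_{n+1}^z(r)\mid\cF_{n+1}^z]=0$, the tower property gives
\begin{equation*}
\E\bigl[(R(r)-R_{n+1}^z(r))^2\bigm\vert\cF_n\bigr]=\E\bigl[(R(r)-R_n(r))^2\bigm\vert\cF_n\bigr]-\E\bigl[(R_{n+1}^z(r)-R_n(r))^2\bigm\vert\cF_n\bigr].
\end{equation*}
The first term on the right does not depend on $z$, so~\eqref{super} amounts to maximizing $J(z):=\E[(R_{n+1}^z(r)-R_n(r))^2\mid\cF_n]$ over $z\in\cD_{\bx_n}$.

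Next, simulating match $n+1$ only modifies observations along the branch $B_{rz}$, so Proposition~\ref{rn} (applied at time $n+1$) gives $R_{n+1}^z(y)=R_n(y)$ for every $y\in B_{\bx_n}\cup\cD_{\bx_n}$ lying off $B_{rz}$. For an internal ancestor $x\in B_{rz}$ with its unique child $x'\in B_{rz}$, formula~\eqref{rnrec} combined with~\eqref{dfu} (noting $U_n(x')$ depends only on brothers of $x'$ and hence is unchanged by the update) rewrites as $R_n(x)=\indiq_{\{t(x)=0\}}U_n(x')R_n(x')+\indiq_{\{t(x)=1\}}(1-U_n(x')(1-R_n(x')))$; subtracting the analogous identity for $R_{n+1}^z$ yields, in both cases, $R_{n+1}^z(x)-R_n(x)=U_n(x')(R_{n+1}^z(x')-R_n(x'))$. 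Iterating along $B_{rz}$ produces the telescoping identity
\begin{equation*}
R_{n+1}^z(r)-R_n(r)=\Bigl(\prod_{w\in B_{rz}\setminus\{r\}}U_n(w)\Bigr)\bigl(R_{n+1}^z(z)-R_n(z)\bigr),
\end{equation*}
so $J(z)=\bigl(\prod_{w\in B_{rz}\setminus\{r\}}U_n(w)^2\bigr)\E[(R_{n+1}^z(z)-R_n(z))^2\mid\cF_n]$. Applying Remark~\ref{ts} with $T=B_{\bx_n}\cup\cD_{\bx_n}$ identifies the conditional law of $(\cT_z,(R(y))_{y\in\cL\cap\cT_z})$ given $\cF_n$ as $G_{\cK_z,z}$, which combined with Definition~\ref{dfms}(ii) yields $\E[(R_{n+1}^z(z)-R_n(z))^2\mid\cF_n]=s(\cK_z,z)=Z_n(z)$.

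Finally, a backward induction on depth using~\eqref{dfz} shows that for every $x\in B_{\bx_n}$,
\begin{equation*}
Z_n(x)=\max\Bigl\{\prod_{w\in B_{xz}\setminus\{x\}}U_n(w)^2\cdot s(\cK_z,z)\;:\;z\in\cD_{\bx_n}\cap\cT_x\Bigr\}
\end{equation*}
(with $\max\emptyset=0$), and the greedy argmax descent from $r$ realizes this maximum at its terminal node, giving $Z_n(r)=\max_{z\in\cD_{\bx_n}}J(z)$. On $\{R_n(r)\notin\{0,1\}\}$, a short combinatorial check based on~\eqref{rnrec} shows that the descent from $r$ can always be continued to a child $y$ with $R_n(y)\in(0,1)$ and $U_n(y)>0$, eventually reaching a $z\in\cD_{\bx_n}$ with $\prod_{w\in B_{rz}\setminus\{r\}}U_n(w)>0$ and $s(\cK_z,z)>0$; hence $Z_n(r)>0$ and, since $Z_n$ vanishes on $\bx_n$, no descent maximizer lies in $\bx_n$, so $z_*\in\cD_{\bx_n}$. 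The main obstacle will be the distributional identification in the third paragraph: verifying that conditionally on $\cF_n$ the subtree planted at $z$ together with the fresh uniformly random match simulated from $z$ really reproduces the setup of Definition~\ref{dfms}, which via the branching property in Assumption~\ref{as} reduces to decomposing $\cF_n$ in terms of $A_{B_{\bx_n}\cup\cD_{\bx_n}}$ and independent subfamilies attached to the other boundary nodes.
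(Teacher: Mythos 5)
Your strategy coincides with the paper's own: the explained-variance identity reduces \eqref{super} to maximizing $\E[(R_{n+1}^z(r)-R_n(r))^2\mid\cF_n]$; the telescoping along $B_{rz}$ gives $\E[(R_{n+1}^z(r)-R_n(r))^2\mid\cF_n]=\bigl(\prod_{w\in B_{rz}\setminus\{r\}}U_n(w)^2\bigr)\,s(\cK_z,z)$; and unrolling the recursion \eqref{dfz} along the greedy path shows that the terminal node of the descent attains $Z_n(r)=\max_{z\in\cD_{\bx_n}}\bar U_n(z)^2 s(\cK_z,z)$ (your telescoping in fact handles directly the point the paper treats via the observation that $\arg\!\max\{U_n(y)^2Z_n(y)\}=\arg\!\max\{\bar U_n(y)^2Z_n(y)\}$ whenever $\bar U_n$ of the parent is positive). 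These parts are sound.

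The genuine gap is the one you flag yourself and then dispose of too quickly: the identification of the conditional law of $(\cT_z,(R(y))_{y\in\cL\cap\cT_z})$ given $\cF_n$ with $G_{\cK_z,z}$. This underlies everything in your third paragraph --- both $R_n(z)=m(\cK_z,z)$ (hence Proposition \ref{rn} itself, which you invoke at times $n$ and $n+1$) and $\E[(R_{n+1}^z(z)-R_n(z))^2\mid\cF_n]=s(\cK_z,z)$. It does not reduce merely to ``decomposing $\cF_n$ in terms of $A_{B_{\bx_n}\cup\cD_{\bx_n}}$ and independent subfamilies'': $\cF_n$ is generated by the history of an \emph{admissible algorithm}, i.e.\ by data-dependent randomized selections of which boundary node to expand at each step, and one must prove that this selection mechanism does not bias the conditional law --- that conditioning on the algorithmic event $\{\bx_n=\by_n,\,\cD_{\by_n}=D_n,\,(R(y))_{y\in\by_n}=(a(y))_{y\in\by_n}\}$ is the same as conditioning on the static event $\{\by_n\subset\cL,\,\cD_{\by_n}=D_n,\,(R(y))_{y\in\by_n}=(a(y))_{y\in\by_n}\}$. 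This is Lemma \ref{cruzero} of the paper (which the authors call ``intuitively obvious, but we found no short proof''), established by induction on $n$ using that the uniform-random-match transition probabilities $\prod_u|C^T_u|^{-1}$ cancel identically in both conditionings; only then does Assumption \ref{as} yield Lemma \ref{cru} and Remark \ref{ts} in the form you need. A secondary omission: your closing positivity argument uses $m(\cK_z,z)\in(0,1)\Rightarrow s(\cK_z,z)>0$, which is the nontrivial direction of Lemma \ref{msok} and should be cited (or proved via Lemma \ref{hor}) rather than asserted.
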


Observe that if $R_n(r)\in\{0,1\}$, then $R(r)=R_n(r)$,
because conditionally on $\cF_n$, $R(x)$ is Bernoulli with parameter $R_n(r)=\Pr(R(r)=1|\cF_n)$.
Hence on the event $\{R_n(r)\in\{0,1\}\}$, we perfectly know $R(r)$ from $\cF_n$ and 
thus the $(n+1)$-th match is useless.

When $R_n(r)\notin\{0,1\}$, we have the knowledge $\cF_n$, and the theorem tells us
how to choose $z_*\in\cD_{\bx_n}$ such that, after a uniformly random match starting from $z_*$,
we will estimate at best, in some $L^2$-sense, $R(r)$. 
In words, $z_*$ can be found by starting from the root, getting down in the tree
$B_{\bx_n}\cup\cD_{\bx_n}$ by following the maximum values of
$U_n^2Z_n$, until we arrive in $\cD_{\bx_n}$.

As noted by Bruno Scherrer, $z_*$ is also optimal if using a $L^1$-criterion.

\begin{rk}
With the assumptions and notation of Theorem \ref{mr}, it also holds that
\begin{equation}\label{super2}
z_*=\arg\!\min \Big\{ \E\Big[|R_{n+1}^z(r)-R(r)|\Big\vert \cF_n\Big] : z\in \cD_{\bx_n} \Big\}.
\end{equation}
\end{rk}

This is easily deduced from \eqref{super}, noting that conditionally on $\cF_{n+1}^z$ (which
contains $\cF_n$), $R(r)$ is Bernoulli$(R_{n+1}^z(r))$-distributed, and that for 
$X\sim$Bernoulli$(p)$, $\E[|X-p|]=2\E[(X-p)^2]$.

\subsection{The algorithm}

The resulting algorithm is as follows.

\begin{algo}\label{ouralgo} Each time we use $\arg\!\max$, we e.g. choose at uniform
random in case of equality.

{\bf Step 1.} 
Simulate a uniformly random match from $r$, call $x_1$ the resulting leave
and set $\bx_1=\{x_1\}$. 

During this random match, keep track of $R(x_1)$, of $B_{\bx_1}=B_{rx_1}$ and of 
$\cD_{\bx_1}=\cup_{y\in B_{rx_1}}\cH_y$ and set $R_1(x)=m(\cK_x,x)$ 
and $Z_1(x)=s(\cK_x,x)$ for all $x\in\cD_{\bx_1}$.

Set $x=x_1$, $R_1(x)=R(x_1)$ and $Z_1(x)=0$.

Do $\{x\!=\!f(x)$, compute $R_1(x)$ using \eqref{rnrec}, $(U_1(y))_{y\in\cC_x}$ using \eqref{dfu}
and $Z_1(x)$ using \eqref{dfz}$\}$ until $x=r$.

{\bf Step n+1.} Put $z\!=\!r$. Do 
$z=\arg\!\max\{(U_n(y))^2Z_n(y):y \in \cC_z\}$ until $z\in \cD_{\bx_n}$.
Set $z_n=z$.

Simulate a uniformly random match from $z_n$, call $x_{n+1}$ the 
resulting leave, set $\bx_{n+1}=\bx_n\cup\{x_{n+1}\}$.

During this random match, keep track of $R(x_{n+1})$, of 
$B_{\bx_{n+1}}=B_{\bx_n}\cup B_{rx_{n+1}}$ and of $\cD_{\bx_{n+1}}=(\cD_{\bx_n}\setminus \{z_n\}) \cup \bigcup_{y\in B_{z_nx_{n+1}}
\setminus \{z_n\}}\cH_y$ and set $R_{n+1}(x)=m(\cK_x,x)$ and $Z_{n+1}(x)=s(\cK_x,x)$ for all 
$x \in \bigcup_{y\in B_{z_nx_{n+1}}\setminus \{z_n\}}\cH_y$.

For all $x\in (B_{\bx_{n+1}}\cup\cD_{\bx_{n+1}})\setminus B_{rx_{n+1}}$,
put $R_{n+1}(x)=R_n(x)$ and $Z_{n+1}(x)=Z_n(x)$. 

For all $x\in (B_{\bx_{n+1}}\cup\cD_{\bx_{n+1}})\setminus \cK_{x_{n+1}}$,
put $U_{n+1}(x)=U_n(x)$.

Set $x=x_{n+1}$, put $R_{n+1}(x)=R(x_{n+1})$ and $Z_{n+1}(x)=0$.

Do $\{x=f(x)$, compute $R_{n+1}(x)$ using \eqref{rnrec}, compute $(U_{n+1}(y))_{y \in \cC_x}$ using \eqref{dfu}
and $Z_{n+1}(x)$ using \eqref{dfz}$\}$
until $x=r$.

If $R_{n+1}(r)\in\{0,1\}$, go directly to the conclusion.

{\bf Conclusion.} Stop after a given number of iterations $n_0$ 
(or after a given amount of time). As \emph{best child} of $r$, choose
$x_*=\arg\!\max\{R_{n_0}(x): x \in \cC_r\}$.
\end{algo}

\begin{figure}[b]
\setlength{\unitlength}{0.7mm}
\begin{center}\framebox{
\hbox{\hspace{-12mm}
\begin{picture}(110,100)
\put(49.5,67){$r$}
\put(50,65){\line(-32,-15){32}}\put(50,65){\line(0,-1){15}}\put(50,65){\line(32,-15){32}}
\put(50,50){\line(-10,-15){10}}\put(50,50){\line(0,-15){15}}\put(50,50){\line(10,-15){10}}   
\put(82,50){\line(-10,-15){10}}\put(82,50){\line(10,-15){10}}
\put(40,35){\line(-4,-15){4}}\put(40,35){\line(0,-15){15}}\put(40,35){\line(4,-15){4}}
\put(60,35){\line(-4,-15){4}}\put(60,35){\line(0,-15){15}}\put(60,35){\line(4,-15){4}}
\put(92,35){\line(-4,-15){4}}\put(92,35){\line(0,-15){15}}\put(92,35){\line(4,-15){4}}
\put(44,20){\line(-2,-15){2}}\put(44,20){\line(2,-15){2}}
\put(92,20){\line(-2,-15){2}}\put(92,20){\line(2,-15){2}}
\put(44,2){$x_1$}\put(63,17){$x_2$}\put(88,2){$x_3$}\put(66,35){$z_4$}
\linethickness{2pt}
\put(50,65){\line(0,-1){15}}\put(50,65){\line(32,-15){32}}
\put(50,50){\line(-10,-15){10}}\put(82,50){\line(10,-15){10}}\put(50,50){\line(10,-15){10}}  
\put(40,35){\line(4,-15){4}}\put(60,35){\line(4,-15){4}}\put(92,35){\line(0,-15){15}}
\put(44,20){\line(2,-15){2}}\put(92,20){\line(-2,-15){2}}
\put(36,20){\circle*{2}}\put(40,20){\circle*{2}}\put(50,35){\circle*{2}}
\put(72,35){\circle*{2}}\put(88,20){\circle*{2}}\put(96,20){\circle*{2}}
\put(56,20){\circle*{2}}\put(60,20){\circle*{2}}
\put(42,5){\circle*{2}}\put(94,5){\circle*{2}}\put(18,50){\circle*{2}}
\put(15,95){\small {\bf Figure 3.} After Step 3, we have the (thick)}
\put(15,90){\small explored tree $B_{\bx_3}$, its boundary (bullets)}
\put(15,85){\small  $\cD_{\bx_3}$, and the values of $R_3,Z_3$ and $U_3$ on the}
\put(15,80){\small whole picture.}
\put(15,73){\small 1. Select $z_4$ using the $(U_3,Z_3)$'s.}
\end{picture}}
\hbox{\hspace{-12mm}
\begin{picture}(118,100)
\put(49.5,67){$r$}
\put(11,-2){\line(0,1){103}}
\put(50,65){\line(-32,-15){32}}\put(50,65){\line(0,-1){15}}\put(50,65){\line(32,-15){32}}
\put(50,50){\line(-10,-15){10}}\put(50,50){\line(0,-15){15}}\put(50,50){\line(10,-15){10}}   
\put(82,50){\line(-10,-15){10}}\put(82,50){\line(10,-15){10}}
\put(40,35){\line(-4,-15){4}}\put(40,35){\line(0,-15){15}}\put(40,35){\line(4,-15){4}}
\put(60,35){\line(-4,-15){4}}\put(60,35){\line(0,-15){15}}\put(60,35){\line(4,-15){4}}
\put(92,35){\line(-4,-15){4}}\put(92,35){\line(0,-15){15}}\put(92,35){\line(4,-15){4}}
\put(44,20){\line(-2,-15){2}}\put(44,20){\line(2,-15){2}}
\put(92,20){\line(-2,-15){2}}\put(92,20){\line(2,-15){2}}
\put(72,35){\line(-2,-15){2}}\put(72,35){\line(2,-15){2}}
\put(74,20){\line(-2,-15){2}}\put(74,20){\line(2,-15){2}}
\put(44,2){$x_1$}\put(63,17){$x_2$}\put(88,2){$x_3$}\put(75,2){$x_4$}
\linethickness{2pt}
\put(50,65){\line(0,-1){15}}\put(50,65){\line(32,-15){32}}
\put(50,50){\line(-10,-15){10}}\put(82,50){\line(10,-15){10}}\put(50,50){\line(10,-15){10}}  
\put(82,50){\line(-10,-15){10}}  
\put(40,35){\line(4,-15){4}}\put(60,35){\line(4,-15){4}}\put(92,35){\line(0,-15){15}}
\put(44,20){\line(2,-15){2}}\put(92,20){\line(-2,-15){2}}
\put(72,35){\line(2,-15){2}}
\put(74,20){\line(2,-15){2}}
\put(36,20){\circle*{2}}\put(40,20){\circle*{2}}\put(50,35){\circle*{2}}
\put(88,20){\circle*{2}}\put(96,20){\circle*{2}}\put(70,20){\circle*{2}}
\put(56,20){\circle*{2}}\put(60,20){\circle*{2}}\put(72,5){\circle*{2}}
\put(42,5){\circle*{2}}\put(94,5){\circle*{2}}\put(18,50){\circle*{2}}

\put(15,95){\small 2. Simulate a uniformly random match from $z_3$,}
\put(15,90){\small leading to a leave $x_4$. This builds a new thick}
\put(15,85){\small branch and new bullets.}
\put(15,78){\small 3. Observe $R(x_4)$ and compute $R_4,Z_4,U_4$ on $\cK_{x_4}$.} 
\put(15,73){\small Everywhere else, set $(R_4,Z_4,U_4)=(R_3,Z_3,U_3)$.}
\end{picture}}}
\end{center}
\end{figure}

\subsection{The update is rather quick}\label{qu}

For e.g. a (deterministic) regular tree $\cT$ with degree $d$ and depth $K$,
the cost to achieve $n$ steps of the above algorithm is of order $nKd$,
because at each step, we have to update the values of $R_n(x)$ and $Z_n(x)$
for $x \in B_{rx_{n}}$ (which concerns $K$ nodes) and the values of $U_{n}(x)$ for $x \in \cK_{x_{n+1}}$
(which concerns $Kd$ nodes).

Observe that MCTS algorithms (see Sections \ref{intro} and \ref{mcts})
enjoy a cost of order
$Kn$, since the updates are done only on the branch $B_{rx_n}$ (or even less than that,
but in any case we have at least to simulate a random match, 
of which the cost is proportional to $K$, at each step).

The cost in $Kdn$ for Algorithm \ref{ouralgo} 
seems miraculous. We have not found any deep reason, but calculus, explaining why
this \emph{theoretically optimal} (in a loose sense) behaves so well.
It would have been more natural, see Remark \ref{update}, 
that the update would concern the whole explored tree
$B_{\bx_n}\cup\cD_{\bx_n}$, which contains much more than $Kd$ elements. 
Very roughly,
its cardinal is of order $Kdn$, which would lead to a cost of order
$K d n^2$ to achieve $n$ steps.

We did not write it down in the present paper, which is technical enough, 
but we also studied two variations of Theorem \ref{mr}.

$\bullet$ First, we considered the very same model, but we tried minimize
$\Pr(\indiq_{\{R_{n+1}^z(r)\}>1/2}\neq R(r) \vert \cF_n)$
instead of \eqref{super}. This is more natural, since in practice, one would rather estimate
$R(r)$ by $\indiq_{\{R_{n}(r)>1/2\}}$ than by $R_n(r)$ (because $R(r)$ takes values in $\{0,1\}$). 
It is possible to extend our theory, but this leads to an algorithm with a cost of order $K d n^2$
(at least, we found no way to reduce this cost).

$\bullet$ We also studied what happens in the case where the game may lead to draw. 
Then the outcomes $(A(x))_{x\in\cT}$ can take three values,
$0$ (if $J_0$ wins), $1$ (if $J_1$ wins) and $1/2$ (if the issue is a draw). 
For any $x\in\cT$, we can define the minimax rating $A(x)$ as 
$0$ (if $J_0$ has a winning strategy), $1$ (if $J_1$ has a winning strategy) and $1/2$
(else). The family $(A(x))_{x\in\cT}$ satisfies the backward induction relation \eqref{rr}.
A possibility is to identify a draw to a victory of $J_0$ (or of $J_1$). Then, under 
Assumption \ref{as}
with $R(x)=\indiq_{\{A(x)=1\}}$, we can
apply directly Theorem \ref{mr}. However, this leads to an algorithm that tries to find a winning 
move, but gives up if it thinks it cannot win: the algorithm does not make any difference between
a loss and a draw. It is possible to adapt our theory to overcome this default
by trying to estimate both $R(x)=\indiq_{\{A(x)=1\}}$ and $S(x)=\indiq_{\{A(x)=0\}}$, i.e. to
minimize something like $\E[a(R_{n+1}^z(r)-R(r))^2+b(S_{n+1}^z(r)-S(r))^2 \vert \cF_n]$
for some $a,b \geq 0$.
However, this leads, again, to an algorithm of which the cost is of order $K d n^2$, 
unless $b=0$ (or $a=0$), which means that we identify a draw to a victory of $J_0$ (or of $J_1$).
Technically, this comes from the fact that in such a framework, nothing like Observation \eqref{ttaacc} does occur,
see also Remark \ref{update}.

In practice, one can produce an algorithm taking draws into account as follows: at each step, we compute
$(R_n(x),Z_n(x))_{x \in B_{\bx_n}\cup\cD_{\bx_n}}$ identifying draws to victories of $J_0$ and, with obvious notation,
$(R_n'(x),Z_n'(x))_{x \in B_{\bx_n}\cup\cD_{\bx_n}}$ identifying draws to victories of $J_1$. 
We use our algorithm with $(R_n(x),Z_n(x))_{x \in B_{\bx_n}\cup\cD_{\bx_n}}$ while $R_n(r)$ is not too small, 
and we then switch to $(R_n'(x),Z_n'(x))_{x \in B_{\bx_n}\cup\cD_{\bx_n}}$. We have no clear idea of how to choose the 
threshold.

Finally, the situation is even worse for games with a large number (possibly infinite)
of game values (representing the gain of $J_1$). This could for example be modeled by independent Beta priors
on the leaves. As first crippling difficulty, Beta laws are not stable by maximum and minimum.

\subsection{Convergence}

It is not difficult to check that, even with a completely wrong model, Algorithm 
\ref{ouralgo} always converges
in a finite (but likely to be very large) number of steps.

\begin{rk}\label{nvi}
(i) Forgetting everything about theory,
we can use Algorithm \ref{ouralgo} with any pair of functions $m$ and $s$,
both defined on $\{(S,x): S \in \cS_f,x\in L_S\}$, $m$ being valued in $[0,1]$
and $s$ being valued in $[0,\infty)$.

(ii) For any constant $\lambda>0$, the algorithm using the functions $m$ and 
$\lambda s$ is precisely the same than
the one using $m$ and $s$.
\end{rk}

Note that we allow $s$ to be larger than $1$, which is never 
the case from a theoretical point of view. But in view of (ii), it is very natural.
We will prove the following result in Section \ref{ggcc}.

\begin{prop}\label{toujconv}
Consider any fixed tree $\cT\in \cS_f$ with $\cL$ its set of leaves 
and any fixed outcomes $(R(x))_{x\in \cL}$. Denote by $(R(x))_{x \in \cT}$ the corresponding minimax
values. Apply Algorithm \ref{ouralgo} with any given pair of functions $m$ and $s$ on
$\{(S,x): S \in \cS_f, S\subset \cT,x\in L_S\}$ with values in $[0,1]$ 
and $[0,\infty)$ respectively and satisfying the following condition:
for any $S\in\cS_f$ with $S\subset \cT$ and $\cD_{L_S}=\emptyset$, any $x\in L_S$,
$s(S,x)=0$ if and only if $m(S,x)\in\{0,1\}$, and in such a case, $m(S,x)=R(x)$.

(i) For every $n\geq 1$, $R_n(r)\notin \{0,1\}$ implies that $x_{n+1}\notin \bx_n$.

(ii) There is $n_0\geq 1$ finite such that 
$R_{n_0}(r)\in\{0,1\}$ and we then have $R_{n_0}(r)=R(r)$.
\end{prop}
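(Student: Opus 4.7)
The proof is purely deterministic. The plan is to establish a central lemma and deduce both parts from it. The lemma asserts that for every $n\geq 1$ and every $x\in B_{\bx_n}\cup\cD_{\bx_n}$: (a) $Z_n(x)=0$ implies $R_n(x)\in\{0,1\}$, and (b) $R_n(x)\in\{0,1\}$ implies $R_n(x)=R(x)$. I would prove both simultaneously by backward induction on the finite subtree $B_{\bx_n}\cup\cD_{\bx_n}$. The base cases are immediate: for $x\in\bx_n$, $R_n(x)=R(x)\in\{0,1\}$ and $Z_n(x)=0$; for $x\in\cD_{\bx_n}$, one first checks that $\cK_x\subset\cT$ and $\cD_{L_{\cK_x}}=\emptyset$ (every leaf of $\cK_x$ has all its $\cT$-brothers inside $\cK_x$), so the hypothesis on $(m,s)$ applies directly to $S=\cK_x$ and yields both (a) and (b). The inductive step at an internal $x$ with $t(x)=0$ (the case $t(x)=1$ is symmetric) exploits \eqref{rnrec} in the form $R_n(x)=\prod_{y\in\cC_x}R_n(y)$ together with $U_n(y)=\prod_{z\in\cC_x\setminus\{y\}}R_n(z)$. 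For (a): $Z_n(x)=0$ forces each child $y$ to satisfy $U_n(y)=0$ or $Z_n(y)=0$; in the first case some sibling of $y$ has $R_n=0$, hence $R_n(x)=0$, while if no such $y$ exists then all children satisfy $Z_n(y)=0$, so by induction $R_n(y)\in\{0,1\}$ and thus $R_n(x)\in\{0,1\}$. For (b): if $R_n(x)\in\{0,1\}$, the product formula pins the pertinent children to $\{0,1\}$-values; the induction hypothesis then upgrades $R_n(y)=R(y)$ for those children, and the identity $\prod R(y)=\min R(y)=R(x)$ for $\{0,1\}$-valued numbers closes the step.

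From this lemma, Part (i) follows by contrapositive. If $R_n(r)\notin\{0,1\}$, (a) forces $Z_n(r)>0$. The descent in Theorem \ref{mr} then maintains the invariant $Z_n(y_k)>0$, since the chosen child $y_{k+1}$ maximizes $(U_n(\cdot))^2 Z_n(\cdot)$ over $\cC_{y_k}$ and this maximum is positive. In particular $y_{k+1}\notin\bx_n$ (where $Z_n$ vanishes), and finiteness of $\cT$ forces the descent to terminate at some $z_n\in\cD_{\bx_n}$. The subsequent uniformly random match from $z_n$ lands in $\cT_{z_n}\cap\cL$; since $z_n\notin B_{\bx_n}$ entails $\cT_{z_n}\cap B_{\bx_n}=\emptyset$, the resulting $x_{n+1}$ is outside $\bx_n$.

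Part (ii) is then short: Part (i) forces $|\bx_{n+1}|=|\bx_n|+1$ whenever $R_n(r)\notin\{0,1\}$, and since $\bx_n\subset\cL$ with $|\cL|<\infty$, there must be a finite first index $n_0$ with $R_{n_0}(r)\in\{0,1\}$. Applying part (b) at $x=r$ delivers $R_{n_0}(r)=R(r)$. I expect the main obstacle to be the inductive step for (a) in the lemma: one has to juggle the two alternatives $U_n(y)=0$ and $Z_n(y)=0$ without assuming them jointly across the children, and translate the first alternative into a vanishing factor of the product formula for $R_n(x)$ via a sibling of $y$. The rest is essentially bookkeeping, plus the structural observation $\cD_{L_{\cK_x}}=\emptyset$ that makes the hypothesis on $(m,s)$ available at every boundary node.
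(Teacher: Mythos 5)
Your proposal is correct and follows essentially the same route as the paper: the paper's proof likewise rests on a backward-induction lemma (its Step 2 establishes the equivalence of $Z_n(x)=0$ and $R_n(x)\in\{0,1\}$ together with $R_n(x)=R(x)$, of which your one-directional version is a sufficient weakening), then deduces (i) from the descent along positive values of $U_n^2Z_n$ and (ii) from finiteness of $\cL$. Your explicit verification that $\cK_x\subset\cT$ and $\cD_{L_{\cK_x}}=\emptyset$ at boundary nodes is a welcome detail the paper leaves implicit.
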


Let us emphasize this proposition assumes nothing but the fact that $\cT$ is finite.
Assumption \ref{as} is absolutely not necessary here.
The condition on $m$ and $s$ is very general and obviously satisfied if e.g. $m$ is $(0,1)$-valued and if $s$ is 
$(0,\infty)$-valued.

Once a sufficiently large part of the tree is explored (actually, almost all the tree up to some pruning),
the algorithm knows perfectly the minimax value of $r$,
even if $m$ and $s$ are meaningless. Thus, the structure of the algorithm looks rather nice.
In practice, for a large game, the algorithm will never be able to explore 
such a large part of the tree, so that the choice of the functions $m$ and $s$ 
is very important. However,
Proposition \ref{toujconv} is reassuring: we hope that even if the modeling is approximate, 
so that the choices of $m$ and $s$ are not completely convincing,
the algorithm might still behave well.

\begin{lem}\label{msok}
Under Assumption \ref{as}, the functions $m$ and $s$ introduced in Definition \ref{dfms}
satisfy the condition of Proposition \ref{toujconv}: for any $S\in\cS_f$ 
such that $\Pr(A_S)>0$, any $x\in L_S$, 
$s(S,x)=0$ if and only if $m(S,x)\in\{0,1\}$, and in such a case, $R(x)=m(S,x)$ a.s. on $A_S$.
\end{lem}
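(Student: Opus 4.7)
The easy direction is immediate: if $m(S,x)\in\{0,1\}$ then by the definition $m(S,x)=\Pr(R(x)=1|A_S)$ one has $R(x)=m(S,x)$ a.s.\ on $A_S$, hence $M:=\Pr(R(x)=1|\cG\vee\sigma(A_S))=m(S,x)$ a.s.\ on $A_S$ and $s(S,x)=0$; this simultaneously settles the ``in such a case'' clause. For the converse, assume $s(S,x)=0$, which by the tower identity $\E[M|A_S]=m(S,x)$ is equivalent to $M=m(S,x)$ a.s.\ on $A_S$, and aim to deduce $m(S,x)\in\{0,1\}$ by well-founded induction on the (a.s.\ finite) height of $\cT_x$.

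Terminal case: if $\Pr(\cC_x=\emptyset|A_S)>0$, then on $\{\cC_x=\emptyset\}\cap A_S$ the random match from $x$ stops immediately at $y=x$, so $R(x)$ is $\cG$-measurable and $M=R(x)$. The hypothesis then forces $R(x)=m(S,x)$ on this event, and since $R(x)\in\{0,1\}$ we get $m(S,x)\in\{0,1\}$.

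Recursive case: otherwise $x$ has children a.s.\ on $A_S$. Fix any realization $\cC_x=C=\{c_1,\dots,c_d\}$ of positive conditional probability and put $S'=S\cup C$, so $A_{S'}\subset A_S$. Because $\cC_x$ is $\cG$-measurable (it sits inside $\cK_{xy}$), the hypothesis descends to $\Pr(R(x)=1|\cG,A_{S'})=m(S,x)$ a.s.\ on $A_{S'}$, and taking expectations yields $m(S',x)=m(S,x)$. Further condition on $\{y_1=c_i\}$ (conditional probability $1/d$); on this event $\cG$ coincides over $A_{S'}$ with $\sigma(A_{S'},\cG_i)$, where $\cG_i:=\sigma(y,\cK_{c_iy},R(y))$ is the continuation $\sigma$-algebra of the match starting from $c_i$. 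Applying Assumption \ref{as} with $T=S'$, the pairs $((\cT_{c_j},R|_{\cL\cap\cT_{c_j}}))_j$ are independent under $\Pr(\cdot|A_{S'})$; in particular the $R(c_j)$ are independent Bernoulli random variables with parameters $p_j:=m(S',c_j)$, and $\cG_i$ is independent of $(R(c_j))_{j\neq i}$. Setting $M_i:=\Pr(R(c_i)=1|\cG_i,A_{S'})$---the ``$M$'' of the subproblem at $c_i$---the minimax recursion at $x$ combined with this independence yields, on $\{y_1=c_i\}\cap A_{S'}$,
\begin{equation*}
M = \indiq_{\{t(x)=1\}}\Bigl(1 - (1-M_i)\prod_{j\neq i}(1-p_j)\Bigr) + \indiq_{\{t(x)=0\}}M_i\prod_{j\neq i}p_j.
\end{equation*}
Equating with $m(S',x)$ (which equals $1-\prod_j(1-p_j)$ or $\prod_j p_j$ according to $t(x)$) and simplifying, for each $i$ either the off-diagonal product vanishes---in which case some $p_j$ is already extremal and directly forces $m(S',x)\in\{0,1\}$---or $M_i=p_i$ a.s.\ on $A_{S'}$, hence $s(S',c_i)=0$ and the inductive hypothesis at $c_i$ (whose subtree has strictly smaller height) yields $p_i\in\{0,1\}$. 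In every case all $p_j$ lie in $\{0,1\}$, which forces $m(S',x)\in\{0,1\}$, and therefore $m(S,x)=m(S',x)\in\{0,1\}$.

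The main obstacle is the induction setup: the height of $\cT_x$ is random and a priori unbounded, so the argument must be phrased as well-founded induction after conditioning on a realization of $\cT_x$ (equivalently, as strong induction on the essential supremum of the height under $\Pr(\cdot|A_S)$, together with a standard truncation argument when this supremum is $+\infty$). The remaining work is careful bookkeeping of the nested conditionings $\sigma(A_S)\subset\sigma(A_{S'})\subset\sigma(A_{S'})\vee\cG_i$ in order to legitimize both the equality $m(S,x)=m(S',x)$ and the product formula for $M$.
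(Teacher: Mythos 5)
Your computations are sound and essentially reproduce the paper's argument: the easy direction and the ``in such a case'' clause are handled identically, and your recursive step (conditioning on a litter $\cC_x=C$, using the conditional independence of the children under $\Pr(\cdot|A_{S\cup C})$ to factor $M$ as $M_i\prod_{j\neq i}p_j$ or its max-analogue, and concluding that either some $p_j$ is extremal or $s(S\cup C,c_i)=0$) is exactly the vanishing of the quantity $\Gamma(S,x,\by,y)$ from Lemma \ref{hor}, which is what the paper exploits. The gap is the one you flag yourself: the induction is not well-founded as stated. The height of $\cT_x$ under $\Pr(\cdot|A_S)$ is random, and its essential supremum can be $+\infty$ even though $\cT$ is a.s.\ finite (e.g.\ for the subcritical Galton--Watson trees of Example \ref{tm4}, every depth is attained with positive probability). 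In that situation a ``strong induction on the essential supremum'' never gets off the ground, and it is unclear what ``standard truncation argument'' would apply: the quantities $m$ and $s$ are defined through the true law of the game, not a depth-truncated one, and descending one generation at a time into an arbitrary positive-probability litter can produce an infinite chain of subproblems, none of which ever hits your terminal case.

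The correct repair is the one you mention only parenthetically, and it is what the paper does: since $\cT$ is a.s.\ finite there is a finite tree $T$ rooted at $x$ with $\Pr(\cT_x=T|A_S)>0$; follow the litters of $T$ only, i.e.\ set $U_v=S\cup\bigcup_{w\in B_{xv}\setminus\{v\}}C^T_w$ for $v\in T$, and prove the implication ``$s(U_v,v)=0\Rightarrow m(U_v,v)\in\{0,1\}$'' by backward induction on the \emph{finite, deterministic} tree $T$. The base case is a leaf $v$ of $T$, which need not be a leaf of $\cT$ but satisfies $\Pr(v\in\cL|A_{U_v})>0$ (because the realization $\cT_x=T$ is compatible with $A_{U_v}$ and has positive probability), so your terminal-case argument applies there. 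With that bookkeeping your recursive step goes through essentially verbatim. Two cosmetic points: $x$ is not a leaf of $S'=S\cup C$, so $m(S',x)$ must be read as $\Pr(R(x)=1|A_{S'})$ rather than as the $m$ of Definition \ref{dfms}; and in the branch where the off-diagonal product vanishes you conclude $m(S',x)\in\{0,1\}$ directly, not that all the $p_j$ are extremal.
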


Consequently, we can apply Proposition \ref{toujconv} under Assumption \ref{as} with the functions
$m$ and $s$ introduced in Definition \ref{dfms}: for any fixed realization of $\cT$ and $(R(x))_{x \in\cL}$,
if $S\in\cS_f$ with $S\subset \cT$ and $\cD_{L_S}=\emptyset$, then $A_S$ is realized, so that indeed,
for any $x\in L_S$, $s(S,x)=0$ if and only if $m(S,x)\in\{0,1\}$, and in such a case, $m(S,x)=R(x)$.

\subsection{Practical choice of the functions $m$ and $s$}\label{pchoice}

In Section \ref{comput}, we will describe a few models satisfying our conditions and 
for which we can compute the functions $m$ ans $s$. As seen in Definition \ref{dfms} 
(see also Algorithm \ref{ouralgo}),
it suffices to be able to compute $m(\cK_x,x)$ and $s(\cK_x,x)$ for all $x\in \cT$ (actually, for all
$x$ in the boundary $\cD_{\bx_n}$ of the explored tree).
Let us summarize the two main examples.

(i) First, assume that $\cT$ is an inhomogeneous Galton-Watson tree, with maximum depth $K$ and known reproduction
laws, and that conditionally on $\cT$, the outcomes $(R(x))_{x\in \cL}$ are independent Bernoulli random 
variables with parameters depending only on the depths of the involved nodes.
Then we will show that the functions $m(\cK_x,x)$ and $s(\cK_x,x)$ depend only on the depth of $x$ 
and can be computed
numerically, once for all, from the parameters of the model. See Subsection \ref{stm1} for precise statements.
Let us mention that the parameters of the model can be fitted to some real game such as Connect Four 
(even if it is not clear at all that this model is reasonable) by handling a high number of uniformly 
random matches, see Subsection \ref{thealgo} for a few more explanations.

(ii) Second, assume that $\cT$ is some given random tree to be specified later. 
Fix some values $a\in (0,1)$ and $b\in\rr$.
Define $m(\{r\},r)=a$, $s(\{r\},r)=1$ (or any other positive constant, see Remark \ref{nvi}) and, 
recursively, for all $x\in\cT$, define
$m(\cK_x,x)$ and $s(\cK_x,x)$ from $m(\cK_{f(x)},f(x))$, $s(\cK_{f(x)},f(x))$ and  $|\cC_{f(x)}|$
by the formulas
\begin{align*}
m(\cK_x,x)=&\indiq_{\{t(f(x))=0\}}[m(\cK_{f(x)},f(x))]^{1/|\cC_{f(x)}|}+\indiq_{\{t(f(x))=1\}}
(1-[1-m(\cK_{f(x)},f(x))]^{1/|\cC_{f(x)}|}),\\
s(\cK_x,x)=&\Big(\indiq_{\{t(f(x))=0\}}m(\cK_{f(x)},f(x))+\indiq_{\{t(f(x))=1\}}[1-m(\cK_{f(x)},f(x))])\Big)
^{b(|\cC_{f(x)}|-1)}\!\!\!\!s(\cK_{f(x)},f(x)).
\end{align*}
The formula for $m$ is a modeling symmetry assumption rather well-justified and we can treat
the following cases, see in Subsection \ref{stm3} for more details.

(ii)-(a) If we consider Pearl's model \cite{p}, i.e. $\cT$ is the deterministic $d$-regular tree with depth 
$K$ and the outcomes are i.i.d.
Bernoulli random variables with parameter $p$ (explicit as a function of $a,d,K$), then the above formula
for $s$ with $b=2$ is theoretically justified, see Remark \ref{pearl2}.

(ii)-(b) Assume next that $\cT$ is a finite homogeneous Galton-Watson tree with reproduction law 
$(1-p)\delta_0+p\delta_d$ (with $pd\leq 1$)
and that conditionally on $\cT$, the outcomes $(R(x))_{x\in \cL}$ are independent Bernoulli random variables
with parameters $(m(\cK_x,x))_{x\in \cL}$ (that do not need to be computed).
Then if $a=a_0$ is well-chosen (as a function of $p$ and $d$), the above formula for $s$ with $b=0$
(i.e. $s\equiv 1$) is theoretically justified, see Remark \ref{rtm3}.

We also experimented, without theoretical justification, other values of $b$.
But we obtained so few success in this direction that we will not present the corresponding numerical results.

Let us mention that while (i) requires to fit precisely the functions $m$ and $s$ using rough statistics
on the true game, (ii) is rather universal. In particular, it seems that the choice $a=0.5$ and $b=0$
works quite well in practice, and this is very satisfying. Also, the implementation is very simple,
since each time a new node $x$ is visited by the algorithm, we can compute
$m(\cK_x,x)$ from $m(\cK_{f(x)},f(x))$ and the number of children $|\cC_{f(x)}|$ of $f(x)$.

Finally observe that for any tree $\cT$ and any choices of $a\in (0,1)$ and $b\in \rr$, 
$m$ is $(0,1)$-valued and $s$ is $(0,\infty)$-valued, so that Proposition \ref{toujconv} applies:
the algorithm always converges in a finite number of steps.

\subsection{Global optimality fails}

By Theorem \ref{mr}, Algorithm \ref{ouralgo} is optimal, in a loose sense, \emph{step by step}.
That is, if knowing, for some $n\geq 1$, $\cD_{\bx_n}$ and the values of $R(x)$ for $x\in \bx_n \subset \cL$,
it tells us how to choose the next leave $x_{n+1}\in \cL$ so that
$\E[(R_{n+1}(r)-R(r))^2]$ is as small as possible.
However, it is not \emph{globally} optimal.

\begin{rk}\label{not}
Let $\cT$ be the (deterministic) binary tree with depth $3$ and assume that the 
outcomes $(R(x))_{x\in\cL}$
are i.i.d. and {\rm Bernoulli}$(1/2)$-distributed. Then Assumption \ref{as} is satisfied and 
we can compute the functions $m$ and $s$ 
introduced in Definition \ref{dfms}. We thus may 
apply Algorithm \ref{ouralgo}, producing some random leaves 
$x_1,x_2,\dots$. We set $\cF_n=\sigma(\{B_{\bx_n},\cD_{\bx_n},(R(x))_{x\in\bx_n}\})$ and $R_n(r)=\E[R(r)|\cF_n]$.

There is another \emph{admissible} algorithm, producing some random leaves 
$\tx_1,\tx_2,\dots$, such that, for $\ctF_n=\sigma(\{B_{\btx_n},\cD_{\btx_n},(R(x))_{x\in\btx_n}\})$ and 
$\tR_n(r)=\E[R(r)|\ctF_n]$, we have
$$
\E[(\tR_2(r)-R(r))^2] > \E[(R_2(r)-R(r))^2]\quad \hbox{but} \quad
\E[(\tR_3(r)-R(r))^2] < \E[(R_3(r)-R(r))^2].
$$
\end{rk}

It looks very delicate to determine the globally optimal algorithm.
Moreover, it is likely that such an algorithm will be very intricate and will not enjoy the 
\emph{quick update} property discussed in Subsection \ref{qu}.

\section{General convergence}\label{ggcc}

We first show that Algorithm \ref{ouralgo} is convergent with \emph{any} reasonable
parameters $m$ and $s$.

\begin{proof}[Proof of Proposition \ref{toujconv}]
We consider some fixed tree $\cT\in \cS_f$ with $\cL$ its set of leaves,
some fixed outcomes $(R(x))_{x\in \cL}$ and we denote by $(R(x))_{x \in \cT}$ 
the corresponding minimax
values. We also consider any pair of functions $m$ and $s$ 
on $\{(S,x): S \in \cS_f,x$ leave of $S\}$ with values in $[0,1]$
and $[0,\infty)$ respectively and we apply Algorithm \ref{ouralgo}.
We assume that for any $S\in\cS_f$ of which $x$ is a leave, 
$s(S,x)=0$ if and only if $m(S,x)\in\{0,1\}$, and that in such a case, $m(S,x)=R(x)$.

\emph{Step 1.} After the $n$-th step of the algorithm, we have 
some values $R_n(x)\in [0,1]$, $U_n(x)\in[0,1]$ and $Z_n(x)\geq 0$ for all $x\in B_{\bx_n}\cup\cD_{\bx_n}$,
for some $\bx_n=\{x_1,\dots,x_n\}\subset \cL$. These quantities can 
generally not be interpreted in terms 
of conditional expectations, but they always obey, by construction, the following rules.

(a) If $x\in\bx_n$, then $R_n(x)=R(x)$ and $Z_n(x)=0$.

(b) If $x\in\cD_{\bx_n}$, then $R_n(x)=m(\cK_x,x)$ and $Z_n(x)=s(\cK_x,x)$.

(c) If $x\in B_{\bx_n} \!\setminus \bx_n$, $R_n(x)=\indiq_{\{t(x)=0\}} 
\prod_{y\in \cC_x} R_n(y) + \indiq_{\{t(x)=1\}} [1- \prod_{y\in \cC_x} (1-R_n(y))]$.

(d) If $x\in B_{\bx_n}\!\setminus \bx_n$, $Z_n(x)=\max\{U_n^2(y)Z_n(y):y\in\cC_x\}$.

(e) If $x\in (B_{\bx_n}\cup\cD_{\bx_n})\setminus \!\{r\}$, 
$U_n(x)=\indiq_{\{t(v(x))=0\}}\prod_{y\in \cH_x} R_n(y) 
+ \indiq_{\{t(v(x))=1\}} \prod_{y\in\cH_x} (1-R_n(y)).$

We finally recall that, by Proposition \ref{rr},

(f) if $x \in \cT\setminus\cL$, $R(x)=\indiq_{\{t(x)=0\}} \min\{R(y):y\in\cC_x\}+
\indiq_{\{t(x)=1\}} \max\{R(y):y \in \cC_x\}$.

\emph{Step 2.} Here we prove that for all $x\in B_{\bx_n}\cup\cD_{\bx_n}$,
\begin{equation}\label{prop}
\hbox{$R_n(x)\in\{0,1\}$ and $Z_n(x)=0$ are equivalent and imply that $R_n(x)=R(x)$.}
\end{equation}

In the whole step, the notions of \emph{child} (of $x\in B_{\bx_n}\setminus \bx_n$) and 
\emph{brother} (of $x\in B_{\bx_n}\cup \cD_{\bx_n}$) refer to the tree $\cT$ or, equivalently,
to the tree $B_{\bx_n}\cup \cD_{\bx_n}$.

First, \eqref{prop} is obvious if $x\in\bx_n$ by point (a) 
(then $R_n(x)=R(x)\in\{0,1\}$ and $Z_n(x)=0$)
and if $x\in\cD_{\bx_n}$ by point (b) and our assumption on $m$ and $s$.
We next work by backward induction:
we consider  $x \in B_{\bx_n} \setminus \bx_n$, we assume that all its children
satisfy \eqref{prop},
and we prove that $x$ also satisfies \eqref{prop}. We assume for example that $t(x)=0$,
the case where $t(x)=1$ being treated similarly.

If $R_n(x)=0$, then by (c), $x$ has (at least) one child $y$ such that $R_n(y)=0$ 
whence, by induction 
assumption, $R(y)=0$ and thus $R(x)=0$ by (f). Furthermore, $R_n(y)=0$ implies 
that $U_n(z)=0$, whence
$U_n^2(z)Z_n(z)=0$, for all $z$ brother of $y$ by (e).
And by induction assumption, we have $Z_n(y)=0$, whence $U_n^2(y)Z_n(y)=0$.
We conclude, by (d), that $Z_n(x)=0$, and we have seen that $R_n(x)=0=R(x)$.

If $R_n(x)=1$, then by (c), all the children $y$ of $x$ satisfy $R_n(y)=1$, whence, by induction
assumption, $R(y)=1$ and thus $R(x)=1$ by (f). Still by induction assumption,
$Z_n(y)=0$ for all the children $y$ of $x$, whence $Z_n(x)=0$ by (d),
and we have seen that $R_n(x)=1=R(x)$.

Assume now that $Z_n(x)=0$, whence $U_n^2(y)Z_n(y)=0$ for all the children $y$ of $x$ by (d). 
If there is (at least)
one child $y$ of $x$ for which $U_n(y)=0$, this means that there is another child $z$ 
of $x$ for which
$R_n(z)=0$ by (e), whence $R_n(x)=0$ by (c). Else, we have $Z_n(y)=0$ for all 
the children $y$ of $x$, so that
$R_n(y)\in\{0,1\}$ by induction assumption, and thus $R_n(x)\in\{0,1\}$ by (c).

We have shown that $R_n(x)\in\{0,1\}$ implies $Z_n(x)=0$ and $R_n(x)=R(x)$, and we have verified
that  $Z_n(x)=0$ implies $R_n(x)\in\{0,1\}$. Hence $x$ satisfies \eqref{prop}, which completes
the step.

\emph{Step 3.} We now prove that if $x_{n+1}\in \bx_n$, then $R_n(r)\in\{0,1\}$ and this will
prove point (i).
Looking at Algorithm \ref{ouralgo}, we see that $x_{n+1}\in \bx_n$ means that the procedure
$$
\hbox{put $z=r$ and do 
$z=\arg\!\max\{U_n^2(y)Z_n(y):y \in \cC_z\}$ until $z\in \bx_n\cup\cD_{\bx_n}$}
$$
returns some $z \in \bx_n$.
But then, $Z_n(z)=0$ by (a). From (d) and the way $z$ has been built,
one easily gets convinced that this implies that $Z_n(r)=0$, whence $R_n(r)\in\{0,1\}$ by Step 1.

\emph{Step 4.} By Step 3 and since $\cT$ has a finite number of leaves, 
$n_0=\inf\{n\geq 1 : R_n(r)\in\{0,1\} \}$ is well-defined and finite.
Finally, we know from Step 2 that $R_{n_0}(r)=R(r)$.
\end{proof}

\section{Preliminaries}\label{prelim}

We first establish some general formulas concerning the functions $m$ and $s$.
They are not really necessary to understand the proof of our main result, but we need them
to show Lemma \ref{msok}. Also, we will use them to derive more tractable expressions
in some particular cases in Section \ref{comput}.

\begin{lem}\label{hor}
Suppose Assumption \ref{as} and recall Definition \ref{dfms}.
For any $S\in\cS_f$ such that $\Pr(A_S)>0$ and any $x\in L_S$,
\begin{align*}
m(S,x)=&\Pr(x\in\cL,R(x)=1 | A_S) + \!\!\!\sum_{\by \subset \CC_x,|\by|\geq 1}\!\! \Pr(\cC_x=\by |A_S)
\Theta(S,x,\by),\\
s(S,x)=&\sum_{k\in\{0,1\}}\Pr(x\in\cL,R(x)=k | A_S) [k-m(S,x)]^2
+\!\!\! \sum_{\by \subset \CC_x,|\by|\geq 1} \!\! \Pr(\cC_x=\by |A_S) \sum_{y\in\by} 
\frac {\Gamma(S,x,\by,y)}{|\by|},
\end{align*}
where
\begin{align*}
\Theta(S,&\,x,\by)=\indiq_{\{t(x)=0\}} \!\!\prod_{y\in\by} m(S\cup\by,y)+ 
\indiq_{\{t(x)=1\}} \Big(1\!\!-\!\!\prod_{y\in\by} (1-m(S\cup\by,y))\Big),\\
\Gamma(S,&\,x,\by,y)=\indiq_{\{t(x)=0\}} \Big(s(S\cup\by,y)\!\!\prod_{u\in\by\setminus\{y\}}[m(S\cup\by,u)]^2
+ \Big[\!\!\prod_{u\in\by} m(S\cup\by,u) - m(S,x)\Big]^2\Big)\\
&+ \indiq_{\{t(x)=1\}}  \Big(s(S\cup\by,y)\!\!\prod_{u\in\by\setminus\{y\}}[1-m(S\cup\by,u)]^2
+ \Big[\!\!\prod_{u\in\by} (1-m(S\cup\by,u)) - (1-m(S,x))\Big]^2\Big).
\end{align*}
\end{lem}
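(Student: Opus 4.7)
The overall plan is to condition on the children $\cC_x$ of $x$ in $\cT$ (which is well-defined on $A_S$), using the key identity $A_S\cap\{\cC_x=\by\}=A_{S\cup\by}$ valid for any $\by\subset\CC_x$ (with the convention that $\by=\emptyset$ corresponds to $x\in\cL$). This identity follows because on $A_S$, the only leaf of $S$ whose brothers are not yet constrained to lie in $S$ is $x$ itself; imposing $\cC_x=\by$ precisely makes all the brothers (in $\cT$) of the elements of $\by$ lie in $S\cup\by$, which is what $A_{S\cup\by}$ requires. Consequently one may split
\[
\Pr(R(x)=1\vert A_S)=\Pr(x\in\cL,R(x)=1\vert A_S)+\sum_{\by\subset\CC_x,|\by|\geq 1}\Pr(\cC_x=\by\vert A_S)\Pr(R(x)=1\vert A_{S\cup\by})
\]
and similarly decompose $s(S,x)$.

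For the $m$ formula, on $A_{S\cup\by}$ the minimax recursion \eqref{rr} gives $R(x)=\prod_{y\in\by}R(y)$ if $t(x)=0$ and $R(x)=1-\prod_{y\in\by}(1-R(y))$ if $t(x)=1$. By Assumption \ref{as} combined with Remark \ref{ts}, the family $(R(y))_{y\in\by}$ is independent conditionally on $A_{S\cup\by}$, with $\Pr(R(y)=1\vert A_{S\cup\by})=m(S\cup\by,y)$ (since $K^{S\cup\by}_y\subset S\cup\by$, actually $K^{S\cup\by}_y$ contains $\by$ and ancestors, and by Remark \ref{ts} the value $m$ only depends on $K^{S\cup\by}_y$). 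Factoring the product yields $\Theta(S,x,\by)$, hence the first formula.

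For the $s$ formula, I condition additionally on the first step $y'\in\by$ of the uniformly random match starting from $x$, which is drawn uniformly from $\by$ independently of everything else. Given $\cC_x=\by$ and first step $y'$, the sigma-field $\cG$ reduces to $\cG'\lor\sigma(\by,y')$, where $\cG'$ is the analogous field for the match starting from $y'$; crucially $\cG'$ is measurable with respect to $(\cT_{y'},(R(w))_{w\in\cL\cap\cT_{y'}})$ and is therefore independent (given $A_{S\cup\by}$) of the subtrees indexed by $\by\setminus\{y'\}$. Writing $P_{y'}:=\Pr(R(y')=1\vert\cG'\lor\sigma(A_{S\cup\by}))$ and $\mu_u:=m(S\cup\by,u)$, independence plus the minimax recursion gives, for $t(x)=0$,
\[
\Pr(R(x)=1\vert\cG\lor\sigma(A_S),\cC_x=\by,\text{first step }y')=P_{y'}\prod_{u\in\by\setminus\{y'\}}\mu_u,
\]
and the analogous expression $1-(1-P_{y'})\prod_{u\neq y'}(1-\mu_u)$ for $t(x)=1$. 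In either case, subtracting $m(S,x)$, splitting as $(P_{y'}-\mu_{y'})(\text{product})+(\Theta(S,x,\by)-m(S,x))$, squaring, and taking $\E[\cdot\vert A_{S\cup\by}]$ kills the cross term because $\E[P_{y'}\vert A_{S\cup\by}]=\mu_{y'}$; the surviving terms are exactly those defining $\Gamma(S,x,\by,y')$ (using $\E[(P_{y'}-\mu_{y'})^2\vert A_{S\cup\by}]=s(S\cup\by,y')$, which is again legitimate thanks to Remark \ref{ts}). Averaging over $y'$ with weight $1/|\by|$ and summing over $\by$ yields the second formula.

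The main obstacle is the careful bookkeeping for $\cG$: one must verify that, once one conditions on $\cC_x=\by$ and the first step of the match, $\cG'$ is measurable on $\cT_{y'}$ alone, so that it is conditionally independent of the other $R(u)$'s for $u\in\by\setminus\{y'\}$ under $A_{S\cup\by}$. This is where the branching property in Assumption \ref{as} is used in full strength, and it is what allows the awkward conditional variance defining $s(S,x)$ to decouple into the clean $\Gamma$ expression.
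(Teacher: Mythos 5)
Your proposal is correct and follows essentially the same route as the paper's proof: the identity $A_S\cap\{\cC_x=\by\}=A_{S\cup\by}$, the conditional independence of the children's subtrees from Assumption \ref{as}, conditioning on the uniformly chosen first step of the match, and the vanishing of the cross term after squaring are exactly the paper's steps. The only difference is cosmetic bookkeeping (the paper calls your $y'$ the child $w$ of $x$ on the branch to the resulting leave).
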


\begin{proof} We fix $S\in\cS_f$ such that $\Pr(A_S)>0$ and $x\in L_S$. We first observe that
for any $\by \subset \CC_x$ with $|\by|\geq 1$, $A_S\cap \{\cC_x=\by\}=A_{S\cup\by}$
(recall that $A_S$ is the event on which $S\subset \cT$ and all the brothers (in $\cT$)
of all the elements of $S$ also belong to $S$).

We now study $m(S,x)=\Pr(R(x)=1|A_S)$, starting from 
$$
m(S,x)=\Pr(x\in\cL,R(x)=1|A_S)+\sum_{\by \subset \CC_x,|\by|\geq 1}\Pr(\cC_x=\by,R(x)=1 |A_S).
$$
Hence the only difficulty is to verify that $\Pr(\cC_x=\by,R(x)=1 |A_S)= 
\Pr(\cC_x=\by|A_S)\Theta(S,x,\by)$ or, equivalently, that  
\begin{equation}\label{obj1}
\Pr(R(x)=1 |A_S\cap\{\cC_x=\by\})=
\Theta(S,x,\by).
\end{equation}

Since $A_S\cap \{\cC_x=\by\}=A_{S\cup\by}$ and since $\by \subset L_{S\cup\by}$, 
we know from Assumption \ref{as} 
that the family $(\cT_y,(R(u))_{u\in \cL\cap\cT_y})_{y\in\by}$ 
is independent conditionally on $A_S\cap \{\cC_x=\by\}$. Consequently, the family $(R(y))_{y\in\by}$ 
is independent conditionally on $A_S\cap \{\cC_x=\by\}$ (because $R(y)$ depends only on 
$\cT_y$ and $(R(u))_{u\in \cL\cap\cT_y}$, recall Remark \ref{mm}).
We assume e.g. $t(x)=1$. Since 
$R(x)=\max\{R(y):y\in\cC_x\}$, we may write
\begin{align*}
\Pr(R(x)=1 |A_S\cap\{\cC_x=\by\})=1- \prod_{y\in\by} \Pr(R(y)=0|A_S\cap \{\cC_x=\by\}).
\end{align*}
But for $y\in\by$, $\Pr(R(y)=0|A_S\cap \{\cC_x=\by\})=\Pr(R(y)=0|A_{S\cup\by}) =1-m(S\cup\by,y)$,
whence \eqref{obj1}, because $t(x)=1$.

We next study $s$.
Knowing $A_S$, we handle a uniformly random match starting from $x$, with resulting leave $v$
and we set $\cG=\sigma(v,R(v),\cK_{xv})$, where $\cK_{xv}=\cK_v \cap \TT_x$. 
We recall that $s(S,x)=\E[(R_1(x)-m(S,x))^2 |A_S]$,
where $R_1(x)=\Pr(R(x)=1|\cG\lor\sigma(A_S))$. If $x\in\cL$, then $v=x$, whence $R(x)$ is $\cG$-measurable 
and thus $R_1(x)=R(x)$. Consequently,
$$
s(S,x)=\E[(R(x)-m(S,x))^2\indiq_{\{x\in\cL\}}| A_S]
+\!\!\! \sum_{\by \subset \CC_x,|\by|\geq 1} \!\! \E[(R_1(x)-m(S,x))^2\indiq_{\{\cC_x=\by\}} |A_S].
$$
We have $\E[(R(x)-m(S,x))^2\indiq_{\{x\in\cL\}}| A_S]=\sum_{k\in\{0,1\}}\Pr(x\in\cL,R(x)=k | A_S) [k-m(S,x)]^2$.
We thus only have to check that
$\E[(R_1(x)-m(S,x))^2 \indiq_{\{\cC_x=\by\}}|A_S]=\Pr(\cC_x=\by|A_S)\sum_{y\in\by}|y|^{-1}\Gamma(S,x,\by,y)$
or, equivalently, that 
$\E[(R_1(x)-m(S,x))^2|A_S\cap \{\cC_x=\by\}]=\sum_{y\in\by}|y|^{-1}\Gamma(S,x,\by,y)$.

On $A_S\cap \{\cC_x=\by\}$, let $w$ be the child of $x$ belonging to $B_{xv}$.
Since $v$ is obtained by handling a uniformly random match starting from $x$,
$\Pr(w=y|A_S\cap \{\cC_x=\by\})=|\by|^{-1}$ for all $y\in\by$. We thus only have to verify that
\begin{equation}\label{obj2}
\E[(R_1(x)-m(S,x))^2|A_S\cap \{\cC_x=\by\}\cap\{w=y\}]=\Gamma(S,x,\by,y).
\end{equation}

But $A_S\cap \{\cC_x=\by\}=A_{S\cup\by}$, so that, by Assumption \ref{as}
(and since the random match is independent of everything else), the family
$(\cT_u,(R(z))_{z\in \cL\cap\cT_u})_{u\in\by}$ is independent
conditionally on $A_S\cap \{\cC_x=\by\}\cap\{w=y\}$.
Hence the family $(R(u))_{u \in \by\setminus\{y\}}$ is independent and independent of $(\cT_y,(R(z))_{z\in\cL\cap\cT_y})$
conditionally on $A_S\cap \{\cC_x=\by\}\cap\{w=y\}$. In particular, $(R(u))_{u \in \by\setminus\{y\}}$ 
is independent
of $\cG$ conditionally on $A_S\cap \{\cC_x=\by\}\cap\{w=y\}$. 

From now on, we assume e.g. that $t(x)=0$.

We have $R(x)=\min\{R(u):u\in\by\}=\prod_{u\in\by}R(u)$ on $\{\cC_x=\by\}$ and we conclude from the above
independence property that,
conditionally on $A_S\cap \{\cC_x=\by\}\cap\{w=y\}=A_{S\cup\by}\cap\{w=y\}$,
$$
R_1(x)= \Pr\Big(\prod_{u\in\by}R(u)=1\Big|\cG\lor\sigma(A_{S\cup\by}) \Big)=\Pr(R(y)=1|\cG\lor\sigma(A_{S\cup\by})) 
\prod_{u\in\by\setminus \{y\}} \Pr(R(u)=1|A_{S\cup\by}).
$$
But $\Pr(R(u)=1|A_{S\cup\by})=m(S\cup\by,u)$. Adopting the notation 
$R_1(y)= \Pr(R(y)=1|\cG\lor\sigma(A_{S\cup\by}))$, we deduce that
$R_1(x)=R_1(y)\prod_{u\in\by\setminus \{y\}}m(S\cup\by,u)$ on $A_S\cap \{\cC_x=\by\}\cap\{w=y\}$, whence
$$
R_1(x)-m(S,x)=[R_1(y)-m(S\cup \by,y)]\prod_{u\in\by\setminus \{y\}} m(S\cup\by,u) + 
\Big[\prod_{u\in\by} m(S\cup\by,u)-m(S,x)\Big].
$$
Recall that $t(x)=0$.
To conclude that \eqref{obj2} holds true, it only remains to verify that

(a) $\E[(R_1(y)-m(S\cup\by,y))^2|A_S\cap \{\cC_x=\by\}\cap\{w=y\}]=s(S\cup\by,y)$,

(b) $\E[R_1(y) | A_S\cap \{\cC_x=\by\}\cap\{w=y\}]=m(S\cup \by,y)$.

By definition, we have $s(S\cup\by,y)=\E[(R_1(y)-m(S\cup\by,y))^2|A_{S\cup\by}]$
conditionally on $\{w=y\}$, because on $\{w=y\}$, $R_1(y)= \Pr(R(y)=1|\cG\lor\sigma(A_{S\cup\by}))$
is indeed the conditional probability that $R(y)=1$ knowing the information
provided by a uniformly random match starting from $w$ (with resulting leave $v$). Point (a) follows.

For (b), we write 
\begin{align*}
\E[R_1(y) | A_S\cap \{\cC_x=\by\}\cap\{w=y\}]=&\E[\Pr(R(y)=1 |
\cG\lor\sigma(A_{S\cup\by}))|A_{S\cup\by}\cap\{w=y\}]\\
=&\Pr(R(y)=1 |A_{S\cup\by}\cap\{w=y\})\\
=&\Pr(R(y)=1 |A_{S\cup\by})\\
=&m(S\cup\by,y).
\end{align*}
For the second equality, we used that $\{w=y\}\in\cG\lor\sigma(A_{S\cup\by})$.
For the third equality, we used that $w$ is of course independent of $R(y)$
knowing $A_{S\cup\by}$.
\end{proof}

We next give the 

\begin{proof}[Proof of Lemma \ref{msok}]
We fix $S\in\cS_f$ such that $\Pr(A_S)>0$ and $z \in L_S$.

If first $m(S,z)=\Pr(R(z)=1|A_S)=0$, then of course
$R(z)=0$ a.s. on $A_S$, whence also $\Pr(R(z)=1|\cG \lor \sigma(A_S))=0$ a.s. on $A_S$
(recall Definition \ref{dfms}) and thus $s(S,z)=0$.

Similarly, $m(S,z)=1$ implies that $R(z)=1$ a.s. on $A_S$ and that $s(S,z)=0$.

It only remains to prove that $s(S,z)=0$ implies that $m(S,z)\in\{0,1\}$.

If $\Pr(z\in\cL |A_S)>0$, then either $\Pr(z\in\cL,R(z)=0 |A_S)>0$ or 
$\Pr(z\in\cL,R(z)=1 |A_S)>0$. If $s(S,z)=0$, we
deduce from Lemma \ref{hor} that either $[m(S,z)]^2=0$ or $[1-m(S,z)]^2=0$, whence 
$m(S,z)\in\{0,1\}$.

If $\Pr(z\in\cL |A_S)=0$, we consider a finite tree $T$ with root $z$ such 
that $\Pr(\cT_z = T|A_S)>0$.
We set $U_z=S$ and, for all $x\in T\setminus\{z\}$, $U_x=S\cup \bigcup_{y \in B_{zx}\setminus\{x\}} C^T_y\in \cS_f$.
It holds that $x \in L_{U_x}$ for all $x \in T$ and, if $x\in T\setminus L_T$, 
$U_x\cup C^T_x=U_y$ for all $y\in C^T_x$.

We now prove by backward induction that for any $x\in T$, $s(U_x,x)=0$ 
implies that $m(U_x,x)\in\{0,1\}$.
Applied to $x=z$, this will complete the proof. 

If first $x\in L_T$, then  $\Pr(x\in\cL |A_{U_x})>0$, because $A_S \cap \{\cT_z=T\} \subset 
A_{U_x} \cap \{x\in\cL\}$, because $\Pr(\cT_z =T|A_S)>0$ and because $\Pr(A_S)>0$.
We thus have already seen that  $s(U_x,x)=0$ implies that $m(U_x,x)\in\{0,1\}$.

If next $x\in T\setminus L_T$, we introduce $\by=C^T_x$ and we 
see that $\Pr(\cC_x=\by |A_{U_x})>0$, because $A_S \cap \{\cT_z=T\} \subset 
A_{U_x} \cap \{\cC_x=\by\}$, because $\Pr(\cT_z =T|A_S)>0$ and because $\Pr(A_S)>0$. We deduce from  
Lemma \ref{hor} that if $s(U_x,x)=0$, then $\Gamma(U_x,x,\by,y)=0$ for all $y\in\by$. 
If e.g. $t(x)=0$, this implies that for all $y\in\by$ (recall that $U_x \cup \by=U_y$),
$$
\Gamma(U_x,x,\by,y)=s(U_y,y)\!\!\prod_{u\in\by\setminus\{y\}}m(U_u,u)
+ \Big[\!\!\prod_{u\in\by} m(U_u,u) - m(U_x,x)\Big]^2=0.
$$
Thus we always have $m(U_x,x)=\prod_{u\in\by} m(U_u,u)$ and 
either (i) $s(U_u,u)=0$ for all $u\in \by$ or (ii) there is $u\in \by$ such that $m(U_u,u)=0$.
In case (i), we deduce from the induction assumption that $m(U_u,u)\in\{0,1\}$
for all $u\in \by$, whence $m(U_x,x)\in\{0,1\}$. In case (ii), we of course have
$m(U_x,x)=0$.
\end{proof}

We next study the information provided by some admissible algorithm. Here, Assumption \ref{as}
is not necessary. The following result is intuitively obvious, but we found no short proof.

\begin{lem}\label{cruzero}
Recall Setting \ref{n1}: an admissible algorithm provided some leaves $\bx_n=\{x_1,\dots,x_n\}$
together with the objects $\cD_{\bx_n}$ and $(R(x))_{x\in\bx_n}$.
For any (deterministic) $\by_n=\{y_1,\dots,y_n\}\subset \TT$, any $D_n \subset \DD_{\by_n}$ and 
any $(a(y))_{y\in\by_n}\subset\{0,1\}^{\by_n}$, the law of
$(\cT,(R(y))_{y\in \cL})$ knowing
$$
A_n=\{\bx_n=\by_n,\cD_{\by_n} =D_n,(R(y))_{y\in\by_n}=(a(y))_{y\in\by_n}\}
$$ 
is the same as knowing 
$$
A_n'=\{\by_n\subset \cL,\cD_{\by_n}=D_n,(R(y))_{y\in\by_n}=(a(y))_{y\in\by_n}\}
$$
as soon as $\Pr(A'_n)>0$.
\end{lem}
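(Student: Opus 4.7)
The plan is to prove the stronger statement that $\Pr(A_n\mid\cT,(R(y))_{y\in\cL})=q_n\indiq_{A_n'}$, for some deterministic constant $q_n$ depending only on $(\by_n,D_n,(a(y))_{y\in\by_n})$. From this the lemma follows at once: for any bounded measurable $\phi$ of $(\cT,(R(y))_{y\in\cL})$ we get $\E[\phi\indiq_{A_n}]=q_n\E[\phi\indiq_{A_n'}]$, and in particular $\Pr(A_n)=q_n\Pr(A_n')$, so $\E[\phi\mid A_n]=\E[\phi\mid A_n']$ as soon as $\Pr(A_n')>0$ (which, provided $q_n>0$, forces $\Pr(A_n)>0$; otherwise the identity is vacuous). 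The inclusion $A_n\subset A_n'$ is immediate from the description of an admissible algorithm: each $x_k$ is a leaf of $\cT$ and $\cD_{\bx_n}$ is computed in $\cT$, so $\Pr(A_n\mid\cT,R)$ already vanishes off $A_n'$.

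On $A_n'$, I would decompose $A_n$ as the disjoint union of all compatible ``trajectories'' $\tau=(x_1,z_1,x_2,z_2,\ldots,z_{n-1},x_n)$, bearing in mind that some steps may be \emph{redundant} (i.e.\ $z_k\in\bx_k$, so that $\bx_{k+1}=\bx_k$). By the independence from $(\cT,R)$ of the uniform random variables $X_1,\ldots,X_{n-1}$ driving the selection function $F$ (cf.\ Remark \ref{tasoeur}) and of all the uniformly random matches, the conditional probability $\Pr(\tau\mid\cT,R)$ factorizes as a product of two types of factors: selection factors $\Pr(F(B_{\bx_k},\cD_{\bx_k},(R(y))_{y\in\bx_k},X_k)=z_k)$, which by construction see only the observation at step $k$, and random-match factors $\prod_{v\in B_{zy}\setminus\{y\}}|\cC_v|^{-1}$ along each traversed branch (starting with the initial match from $z=r$ to $y=x_1$). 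On $A_n'$ each ingredient is a deterministic function of $(\by_n,D_n,(a(y)))$: the degrees $|\cC_v|$ for $v\in B_{\by_n}\setminus\by_n$ are read off as $|\CC_v\cap(B_{\by_n}\cup D_n)|$, and the observation at any step $k\le n$ is a restriction of the final data. Summing the finitely many admissible trajectories produces the required constant $q_n$.

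The main obstacle I foresee is purely combinatorial bookkeeping: enumerating the admissible trajectories (in particular handling the redundant steps, which can occur any number of times before a new leaf is added) and checking rigorously that the tuple $F$ consumes at step $k$ is really a function of $(\by_n,D_n,(a(y)))$ alone. An induction on $n$ packages this cleanly: conditionally on the $(n-1)$-th state, $X_{n-1}$ and the $n$-th random match are independent of $(\cT,R)$, so one peels off a selection factor (depending only on the step-$(n-1)$ observation, which by the inductive hypothesis belongs to the final data) and, if $z_{n-1}\in\cD_{\bx_{n-1}}$, a random-match factor (a product of reciprocals of degrees along a branch contained in $B_{\by_n}\cup D_n$). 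This reduces to the corresponding claim at rank $n-1$ and closes the induction.
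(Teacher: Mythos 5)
Your proposal is correct and follows essentially the same route as the paper: the paper's proof is also an induction on $n$ that peels off, at each step, the selection factor (a function of the step-$k$ observation — hence of the final data — and independent of $(\cT,R)$ through $X_k$) and the uniform-match factor $\prod_v |\cC_v|^{-1}$, whose degrees are read off from $B_{\by_n}\cup D_n$ on $A_n'$. Your reformulation as the likelihood identity $\Pr(A_n\mid \cT,(R(y))_{y\in\cL})=q_n\indiq_{A_n'}$ is just a cleaner packaging of the ratio computation the paper carries out directly on the conditional laws.
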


\begin{proof} We work by induction on $n$.

\emph{Step 1.} We fix $y_1 \in \TT$, we set $\by_1=\{y_1\}$, we consider
$D_1 \subset \DD_{\by_1}$ and $a(y_1)\in\{0,1\}$. In this step we prove that
the law of $(\cT,(R(y))_{y\in \cL})$ knowing $A_1=\{x_1=y_1,\cD_{\by_1}=D_1,R(y_1)=a(y_1)\}$ is the same as knowing
$A_1'=\{y_1\in\cL,\cD_{\by_1}=D_1,R(y_1)=a(y_1)\}$.

To this aim, we consider $T\in\cS_f$ such that $y_1\in L_T$, $D_{\by_1}^T=D_1$
and $(\alpha(y))_{y \in L_T} \in \{0,1\}^{L_T}$ such that $\alpha(y_1)=a(y_1)$. We have to check that
$$
\frac{\Pr(\cT=T,(R(y))_{y\in L_T}=(\alpha(y))_{y \in L_T},x_1=y_1)}{\Pr(\cD_{\by_1}=D_1,R(y_1)=a(y_1),x_1=y_1)}
=\frac{\Pr(\cT=T,(R(y))_{y\in L_T}=(\alpha(y))_{y \in L_T} )}{\Pr(y_1\in\cL,\cD_{\by_1}=D_1,R(y_1)=a(y_1))}
$$
or, equivalently, that $p=q$, where
\begin{align*}
p=&\Pr(x_1=y_1 |\cT=T,(R(y))_{y\in L_T}=(\alpha(y))_{y \in L_T}),\\
q=& \Pr(x_1=y_1 |y_1\in\cL,\cD_{\by_1}=D_1,R(y_1)=a(y_1)).
\end{align*}
Recalling that $x_1$ is the leave resulting from a uniformly random match 
starting from $r$, one easily
gets convinced that $p=\Pr(x_1=y_1 |\cT=T)=\prod_{z\in B_{ry_1}\setminus \{y_1\}} |C^T_z|^{-1}$.
By the same way,
$q=\Pr(x_1=y_1 |y_1\in\cL,\cD_{\by_1}=D_1)=\prod_{z\in B_{ry_1}\setminus \{y_1\}} 
|C^{B_{y_1}\cup D_1}_z|^{-1}$.
Since $D_{\by_1}^T=D_1$, we have $C^T_z=C^{B_{y_1}\cup D_1}_z$
for all $z\in B_{ry_1}\setminus \{y_1\}$
and the conclusion follows.

\emph{Step 2.} Assume that the statement holds true with some $n\geq 1$.
Consider some deterministic $\by_{n+1}=\{y_1,\dots,y_{n+1}\}\subset \TT$,
$D_{n+1} \subset \DD_{\by_{n+1}}$ and $(a(y))_{y\in\by_{n+1}}\subset\{0,1\}^{\by_{n+1}}$, as well as the 
events
\begin{align*}
A_{n+1}=&\{\bx_{n+1}=\by_{n+1},\cD_{\by_{n+1}}=D_{n+1},(R(y))_{y\in\by_{n+1}}=(a(y))_{y\in\by_{n+1}}\}\\
A'_{n+1}=&\{\by_{n+1}\subset \cL,\cD_{\by_{n+1}}=D_{n+1},(R(y))_{y\in\by_{n+1}}=(a(y))_{y\in\by_{n+1}}\}.
\end{align*}

Recall that $x_{n+1}$ is chosen as follows: for some deterministic function $F$ as in Remark \ref{tasoeur}
and some $X_n\sim\cU([0,1])$ independent of everything else, we set
$z_n=F(\bx_n,\cD_{\bx_n},(R(x))_{x\in\bx_n},X_n)$,
which belongs to $\bx_n\cup\cD_{\bx_n}$. If $z_n\in\bx_n$, we set $x_{n+1}=z_n$, else,
we handle a uniformly random match starting from $z_n$ and denote by $x_{n+1}$ the resulting leave.

If $y_{n+1}\in\by_n$, then we have
$A_{n+1}=A_n \cap \{F(\by_n,D_{n},(a(x))_{x\in\by_n},X_n)=y_{n+1}\}$
and $A_{n+1}'=A_n'$, where $D_n=D_{n+1}$, where 
$A_n=\{\bx_n=\by_n,\cD_{\by_n}=D_{n},(R(y))_{y\in\by_n}=(a(y))_{y\in\by_n}\}$
and where ${A_n'=\{\by_n\subset \cL,\cD_{\by_n}=D_{n},(R(y))_{y\in\by_n}=(a(y))_{y\in\by_n}\}}$. 
By induction assumption,
we know that the law of $(\cT,(R(y))_{y\in \cL})$ knowing $A_{n}$ is the same as knowing $A_n'$.
Since $X_{n}$ is independent of $(\cT,(R(y))_{y\in \cL}),A_n$, 
the law of $(\cT,(R(y))_{y\in \cL})$ knowing $A_{n+1}$ is the same as knowing $A_{n}$ and thus the same as knowing
$A_{n+1}'$ (which equals $A_{n}'$).

If $y_{n+1}\notin\by_n$, let $x$ be the element of $B_{ry_{n+1}}\cap B_{\by_n}$ the closest to $y_{n+1}$ 
and let $z_n$ be the child of $x$ belonging to $B_{ry_{n+1}}$.
We set $D_n=(D_{n+1}\setminus \TT_{z_n})\cup \{z_n\}$.
Then $A_{n+1}=A_n\cap B_1 \cap B_2$ and $A_{n+1}'=A_n'\cap B_2'$, where $A_n$ and $A_{n}'$ are 
as in the statement and
\begin{align*}
B_1=&\{F(\by_n,D_n,(a(x))_{x\in\by_n},X_n)=z_n\},\\
B_2=&\{x_{n+1}=y_{n+1},\cD_{\{y_{n+1}\}}\cap\TT_{z_n}=D_{n+1}\cap \TT_{z_n},R(y_{n+1})=a(y_{n+1})\},\\
B_2'=&\{y_{n+1}\in \cL, \cD_{\{y_{n+1}\}}\cap\TT_{z_n}=D_{n+1}\cap \TT_{z_n},R(y_{n+1})=a(y_{n+1})\}.
\end{align*}
First, since $X_n$ is independent of everything else, the law of $(\cT,(R(y))_{y\in \cL})$ knowing 
$A_{n+1}$ is the same as knowing $A_n\cap B_2$ (from now on, we take the convention that
in $B_2$, $x_{n+1}$ is the leave resulting from a uniformly random match starting from $z_n$).
We thus only have to prove that the law of $(\cT,(R(y))_{y\in \cL})$ knowing 
$A_{n}\cap B_2$ is the same as knowing $A_n'\cap B_2'$.
Consider $T\in\cS_f$ and $(\alpha(y))_{y\in L_T}\in \{0,1\}^{L_T}$, such that 
$\by_{n+1}\subset L_T$, $D^T_{\by_{n+1}}=D_{n+1}$ and $(\alpha(y))_{y\in \by_{n+1}}=(a(y))_{y\in \by_{n+1}}$.
We have to prove that
\begin{equation*}
\Pr(\cT=T,(R(y))_{y\in L_T}=(\alpha(y))_{y \in L_T})|A_n\cap B_2)
=\Pr(\cT=T,(R(y))_{y\in L_T}=(\alpha(y))_{y \in L_T})|A'_n\cap B'_2).
\end{equation*}
We start from 
\begin{align*}
\Pr(\cT\!=\!T,(R(y))_{y\in L_T}\!=\!(\alpha(y))_{y \in L_T})|A_n\cap B_2)=&
\frac{\Pr(\{\cT\!=\!T,(R(y))_{y\in L_T}\!=\!(\alpha(y))_{y \in L_T})\}\cap B_2|A_n)}{\Pr(B_2|A_n)}\\
=&\frac{\Pr(\{\cT\!=\!T,(R(y))_{y\in L_T}\!=\!(\alpha(y))_{y \in L_T})\}\cap B_2|A_n')}{\Pr(B_2|A_n')}
\end{align*}
thanks to our induction assumption. On the one hand, exactly as in Step 1, we have 
\begin{align*}
&\Pr(\{\cT\!=\!T,(R(y))_{y\in L_T}\!=\!(\alpha(y))_{y \in L_T}\}\cap B_2|A_n')\\
=&\Pr(\cT\!=\!T,(R(y))_{y\in L_T}\!=\!(\alpha(y))_{y \in L_T}, x_{n+1}=y_{n+1}|A_n')\\
=&\Pr(\cT\!=\!T,(R(y))_{y\in L_T}\!=\!(\alpha(y))_{y \in L_T}|A_n')\prod_{u\in B_{z_ny_{n+1}}\setminus\{y_{n+1}\}}
|C^T_u|^{-1}.
\end{align*}
On the other hand,
\begin{align*}
\Pr(B_2|A_n')=&\Pr(B_2'\cap\{x_{n+1}=y_{n+1}\}|A_n')=\Pr(B_2'|A_n')
\!\!\!\!\!\!\prod_{u\in B_{z_ny_{n+1}}\setminus\{y_{n+1}\}}\!\!\!\!\!\! \big|C^{B_{z_ny_{n+1}}\cup (D_{n+1}\cap\TT_{z_n})}_u\big|^{-1}.
\end{align*}
Since $C^T_u=C^{B_{z_ny_{n+1}}\cup (D_{n+1}\cap\TT_{z_n})}_u$ for all $u\in\setminus\{y_{n+1}\}$
(because $D_{\by_{n+1}}^T=D_{n+1}$), we conclude that
\begin{align*}
\Pr(\cT\!=\!T,(R(y))_{y\in L_T}\!=\!(\alpha(y))_{y \in L_T})|A_n\cap B_2)
=&\frac{\Pr(\cT\!=\!T,(R(y))_{y\in L_T}\!=\!(\alpha(y))_{y \in L_T}|A_n')}{\Pr(B_2'|A_n')}.
\end{align*}
Since finally $\{\cT\!=\!T,(R(y))_{y\in L_T}\!=\!(\alpha(y))_{y \in L_T}\}\subset B_2' $, we conclude that
$$
\Pr(\cT\!=\!T,(R(y))_{y\in L_T}\!=\!(\alpha(y))_{y \in L_T})|A_n\cap B_2)
=\Pr(\cT\!=\!T,(R(y))_{y\in L_T}\!=\!(\alpha(y))_{y \in L_T}|A_n'\cap B_2'),
$$
which was our goal.
\end{proof}

We deduce the following observation, that is crucial to our study.

\begin{lem}\label{cru}
Suppose Assumption \ref{as} and 
recall Setting \ref{n1}: an admissible algorithm provided some leaves $\bx_n=\{x_1,\dots,x_n\}$
together with the objects $\cD_{\bx_n}$ and $(R(x))_{x\in\bx_n}$ and we define
$\cF_n=\sigma(\bx_n,\cD_{\bx_n},(R(x))_{x\in\bx_n})$. Recall also Remark \ref{ts}.

(i) Knowing $\cF_n$, for all $x \in \cD_{\bx_n}$, the conditional law of 
$(\cT_x,(R(y))_{y \in \cL \cap \cT_x})$ is $G_{\cK_x,x}$.

(ii) Knowing $\cF_n$, for all $x\in B_{\bx_n}\setminus\bx_n$, the family
$((\cT_u,(R(y))_{y \in \cL \cap \cT_u},u \in \cC_x)$ is independent.
\end{lem}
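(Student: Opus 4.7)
The plan is to combine Lemma \ref{cruzero} (which converts conditioning on the algorithm's information $\cF_n$ into conditioning on a simpler, purely event-based, condition) with the branching property of Assumption \ref{as}. Fix a deterministic realization $\bx_n=\by_n$, $\cD_{\bx_n}=D_n$, $(R(y))_{y\in\by_n}=(a(y))_{y\in\by_n}$ of positive probability, and set $T=B_{\by_n}\cup D_n\in\cS_f$, so that $L_T=\by_n\cup D_n$. A routine check identifies $\{\by_n\subset\cL,\,\cD_{\by_n}=D_n\}$ with $A_T$: on $\{\by_n\subset\cL\}$ we have $T\subset\cT$, and $\cD_{\by_n}=D_n$ is precisely the statement that every brother (in $\cT$) of an element of $T$ belongs to $T$, i.e.\ $\cD_{L_T}=\emptyset$. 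Hence the event $A'_n$ appearing in Lemma \ref{cruzero} equals $A_T\cap B$, where $B=\{(R(y))_{y\in\by_n}=(a(y))_{y\in\by_n}\}$. A short induction on $|x|$ also yields $\cK_x=K^T_x$ for every $x\in T$.

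By Lemma \ref{cruzero}, the conditional law of $(\cT,(R(y))_{y\in\cL})$ under $\cF_n$ on the chosen realization equals its law conditionally on $A_T\cap B$. Assumption \ref{as} asserts that, conditionally on $A_T$, the family $Y_z:=(\cT_z,(R(y))_{y\in\cL\cap\cT_z})$, $z\in L_T$, is independent. Since each $R(z)$ with $z\in\by_n$ is a deterministic function of $Y_z$ (recall Remark \ref{mm}), the event $B$ is measurable with respect to $\sigma(Y_z:z\in\by_n)$. A standard property of conditional independence---further conditioning on an event measurable with respect to a subfamily preserves both the mutual independence and the marginal laws of the complementary components---then gives: conditionally on $A_T\cap B$, the family $(Y_z)_{z\in D_n}$ remains independent, and each $Y_z$ has law $G_{T,z}=G_{K^T_z,z}=G_{\cK_z,z}$, thanks to Remark \ref{ts} and the identification $K^T_z=\cK_z$.

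Point (i) is then immediate, since $\cD_{\bx_n}=D_n\subset L_T$. For (ii), fix $x\in B_{\bx_n}\setminus\bx_n$ and note that $\cC_x\subset B_{\bx_n}\cup\cD_{\bx_n}=T$ (every brother in $\cT$ of an element of this set belongs to it). For each $u\in\cC_x$: if $u\in D_n\subset L_T$, then $Y_u$ is itself one of the independent components; if instead $u\in B_{\bx_n}$, then $T_u\subset T$ and, on $A_T$, $\cT_u=T_u\cup\bigcup_{v\in L_T\cap T_u}\cT_v$, so $Y_u$ is a deterministic function (given the fixed $T$) of $(Y_v)_{v\in L_T\cap T_u}$. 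As the subsets $L_T\cap T_u$ for $u\in\cC_x$ are pairwise disjoint subsets of $L_T$, the conditional independence of the $Y_z$'s established above propagates to the family $(Y_u)_{u\in\cC_x}$. The main delicate point is the bookkeeping around the further conditioning on $B$: one must check carefully that it only restricts the $Y_z$'s for $z\in\by_n$ and leaves the independence and marginal laws of the components indexed by $D_n$ intact; once this is granted, everything else reduces to disjoint-subtree arguments and an application of Remark \ref{ts}.
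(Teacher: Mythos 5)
Your proposal is correct and rests on the same two pillars as the paper's proof (Lemma \ref{cruzero} to replace $\cF_n$-conditioning by conditioning on the event $A_n'$, then the branching property of Assumption \ref{as}), but the way you organize the conditional-independence bookkeeping is genuinely different. The paper never conditions on $A_T$ for the full tree $T=B_{\by_n}\cup D_n$: for (i) it writes $A_n'=A_{K^T_x}\cap\bigcap_{u\in L_{K^T_x}\setminus\{x\}}E_u$ with $E_u\in\sigma(Y_u)$ and applies Assumption \ref{as} to the small tree $K^T_x$, and for (ii) it chooses $S=T\setminus\bigcup_{y\in C^T_x}(\TT_y\setminus\{y\})$ precisely so that $\cC_x\subset L_S$, which lets it invoke the branching property directly on the children of $x$ without any grouping. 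You instead apply Assumption \ref{as} once, at $L_T$, and then (a) for (i) use that $B=\{(R(y))_{y\in\by_n}=(a(y))\}$ is measurable with respect to the components indexed by $\by_n$, and (b) for (ii) represent each internal child $u\in\cC_x\cap B_{\bx_n}$ as a deterministic function of the components $(Y_v)_{v\in L_T\cap T_u}$ and group over disjoint index sets. Both routes are valid; the paper's choice of $S$ buys a shorter argument for (ii), while yours is more uniform (a single decomposition $A_n'=A_T\cap B$ serves both parts). One point in your write-up deserves tightening: for (ii) it is not enough that conditioning on $B$ \emph{leaves the components indexed by $D_n$ intact}, because the groups $L_T\cap T_u$ contain elements of $\by_n$; you need that $B$ factors as $\bigcap_{z\in\by_n}\{R(z)=a(z)\}$ with each factor in $\sigma(Y_z)$, so that the \emph{whole} family $(Y_z)_{z\in L_T}$ remains independent conditionally on $A_T\cap B$ (with modified marginals on $\by_n$). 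This is exactly the elementary fact the paper states parenthetically at the end of its proof of (ii), so the gap is one of phrasing rather than substance.
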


Recall that for all $x \in \cD_{\bx_n}$, $\cK_x$ is $\cF_n$-measurable and that 
for all $x\in B_{\bx_n}\setminus\bx_n$, $\cC_x$ is $\cF_n$-measurable. Hence this statement is meaningful.

\begin{proof} 
We observe that $\cF_n$ is generated by the events of the form 
$$A_n=\{\bx_n=\by_n,\cD_{\by_n} =D_n,(R(y))_{y\in\by_n}=(a(y))_{y\in\by_n}\}$$ as in Lemma \ref{cruzero}.
Let $A_n'=\{\by_n\subset \cL,\cD_{\by_n} =D_n,(R(y))_{y\in\by_n}=(a(y))_{y\in\by_n}\}$.
We see that on $A_n'$ (which contains $A_n$), $\cK_x=K_x^T$ for all $x\in D_n$ and
$\cC_x=C_x^T$ for all $x\in B_{\bx_n}\setminus\bx_n$, where
$T=B_{\by_n}\cup D_n$.

To check (i), it thus suffices to prove that knowing $A_n$,
for all $x \in D_n$, the law of $(\cT_x,(\!R(y))_{y \in \cL \cap \cT_x}$
is $G_{K^T_x,x}$. We fix $x \in D_n$. 
By Lemma \ref{cruzero}, it suffices to verify that the law of 
$(\cT_x,(R(y))_{y \in \cL \cap \cT_x}$ 
knowing $A_n'$ is $G_{K^T_x,x}$. Recalling that 
$A_{K^T_x}=\{K^T_x\subset \cT,\cD_{K^T_x}=\emptyset\}$, we write
$A_n'=A_{K^T_x}\cap \bigcap_{u \in L_{K^T_x}\setminus\{x\}} E_u$, 
where 
$$E_u=\{ \by_n\cap \TT_u \subset \cL,\cD_{\by_n}\cap \TT_u=D_n\cap \TT_u,
(R(y))_{y\in \by_n \cap \TT_u}=(a(y))_{y\in \by_n \cap \TT_u}\}.$$
By Assumption \ref{as}, $(\cT_x,(R(y))_{y \in \cL \cap \cT_x}$ is independent 
of $\bigcap_{u \in L_{K^T_x}\setminus\{x\}}E_u$ knowing 
$A_{K^T_x}$. Thus the law of $(\cT_x,(\!R(y))_{y \in \cL \cap \cT_x}$
knowing $A_n'$ equals the law of $(\cT_x,(R(y))_{y \in \cL \cap \cT_x}$ knowing $A_{K^T_x}$,
which is $G_{K^T_x,x}$ by definition.

For (ii), we show that
for any $x\in B_{\by_n}\setminus\by_n$, the family 
$((\cT_u,(R(y))_{y \in \cL \cap \cT_u},u \in C_x^T)$ is independent
conditionally on $A_n$, or equivalently, conditionally on $A_n'$.
To this aim, we introduce $S= T \setminus \cup_{y \in C^T_x}(\TT_{y}\setminus\{y\})$ (i.e. $S$ 
is the tree $T$ from which we have removed all the subtrees strictly below the children of $x$).
We write 
$A_n'=A_S\cap \bigcap_{u \in L_S} E_u$ with $E_u$ as in the proof of (i).
We know from Assumption \ref{as} that the family $((\cT_u,(R(y))_{y \in \cL \cap \TT_u},u \in L_{S}) $
is independent conditionally on $A_S$.
Observing that $E_u \in \sigma(\cT_u,(R(y))_{y \in \cL \cap \cT_u})$ for all $u\in L_S$,
we conclude that the family $((\cT_u,(R(y))_{y \in \cL \cap \cT_u},u \in L_S)$
is independent conditionally on $A_n'$. 
(Here we used that if a family of random variables $(X_i)_{i\in I}$ is independent conditionally
on some event $A$ and if we have some events $E_i\in \sigma(X_i)$, for $i\in I$, then
the family $(X_i)_{i\in I}$ is independent conditionally on $A \cap \bigcap_{i\in I} E_i$).
Since $C^T_x\subset L_S$, 
the conclusion follows.
\end{proof}

\section{Proof of the main result}\label{pprroo}

In the whole section, we take Assumption \ref{as} for granted.
We first compute the conditional minimax values.

\begin{proof}[Proof of Proposition \ref{rn}]
We work under Setting \ref{n1}.
If first $x\in \bx_n$, then $R(x)$ is $\cF_n$-measurable, so that
$R_n(x)=\Pr(R(x)=1|\cF_n)=R(x)$.

Next, for $x \in \cD_{\bx_n}$, we know from Lemma \ref{cru} that 
the law of $(\cT_x,(R(y))_{y \in \cL \cap \cT_x}$ knowing $\cF_n$ is $G_{\cK_x,x}$.
Recalling Definition \ref{dfms} and Remark \ref{ts}, we see that $R_n(x)=\Pr(R(x)=1|\cF_n)=
m(\cK_x,x)$.

Finally, for $x \in B_{\bx_n} \setminus \bx_n$, Lemma \ref{cru}
tells us that the family $((\cT_y,(R(u))_{u \in \cL \cap \cT_y},y \in \cC_x)$ is independent
conditionally on $\cF_n$.
But for $y \in \cC_x$, $R(y)$ is of course $\sigma(\cT_y,(R(u))_{u \in \cL \cap \cT_y})$-measurable
(recall Remark \ref{mm}). Thus the family $(R(y),y \in \cC_x)$ is independent
conditionally on $\cF_n$.
If $t(x)=0$, we may write, by \eqref{rr},
\begin{align*}
R_n(x)=\Pr(R(x)=1 | \cF_n)=
\Pr(\min\{R(y):y \in \cC_x\}=1\vert \cF_n)
=\prod_{y \in \cC_x} \Pr(R(y)=1\vert \cF_n),
\end{align*}
which equals $\prod_{y \in\cC_x} R_n(y)$ as desired. If now $t(x)=1$, we find similarly
\begin{align*}
R_n(x)=\Pr(R(x)=1 | \cF_n)
=\Pr(\max\{R(y):y \in\cC_x\}=1\vert \cF_n)
=1-\prod_{y \in \cC_x} \Pr(R(y)=0\vert \cF_n),
\end{align*}
which is nothing but $1-\prod_{y \in\cC_x}(1-R_n(y))$.
\end{proof}

We now study the different possibilities
for the $(n+1)$-th step. 

\begin{prop}\label{minoumax}
Adopt Setting \ref{n1} and Notation \ref{n2}. 
For $z\in\cD_{\bx_n}$ and $x\in B_{\bx_n}\cup\cD_{\bx_n}$, we set
$$
\Delta_n^z(x)=\E[(R_{n+1}^z(x)-R_n(x))^2\vert\cF_n].
$$

(i) We have 
$\arg\!\min\{ \E[(R_{n+1}^z(r)-R(r))^2\vert \cF_n] : z\in \cD_{\bx_n}\}
=\arg\!\max\{ \Delta_n^z(r) : z\in \cD_{\bx_n}\}$.

(ii) For all $z\in\cD_{\bx_n}$, we have $\Delta_n^z(z)=s(\cK_z,z)$.

(iii) For all $z\in\cD_{\bx_n}$, all $x\in B_{rz}\setminus\{r\}$,
we have $\Delta_n^z(f(x))=(U_n(x))^2 \Delta_n^z(x)$.

(iv) For all $z\in \cD_{\bx_n}$, $\Delta_n^z(r)=s(\cK_z,z) \prod_{y\in B_{rz}\setminus\{r\}} (U_n(y))^2$.
\end{prop}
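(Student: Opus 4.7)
The plan is to establish the four parts in order, noting that (iv) is an immediate consequence of (ii) and (iii) iterated along the branch from $z$ up to $r$.

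For (i), I would use the square-expansion trick. Since $R(r)\in\{0,1\}$, we have $R(r)^2=R(r)$ and $\E[R(r)|\cF_n]=R_n(r)$. Combining the tower property with $R_{n+1}^z(r)=\E[R(r)|\cF_{n+1}^z]$ gives $\E[R_{n+1}^z(r)R(r)|\cF_n]=\E[(R_{n+1}^z(r))^2|\cF_n]$ and $\E[R_{n+1}^z(r)|\cF_n]=R_n(r)$. Expanding both squares then yields
\[
\E[(R_{n+1}^z(r)-R(r))^2|\cF_n]=R_n(r)-R_n(r)^2-\Delta_n^z(r),
\]
and since the first two terms are $\cF_n$-measurable and do not depend on $z$, minimizing the left-hand side over $z\in\cD_{\bx_n}$ is equivalent to maximizing $\Delta_n^z(r)$.

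For (ii), the key observation is that the extra information carried by $\cF_{n+1}^z$ on top of $\cF_n$ is precisely $\cG'=\sigma(y_z,\cK_{zy_z},R(y_z))$, produced by the uniformly random match started from $z$. Lemma \ref{cru}(i) states that conditionally on $\cF_n$, the law of $(\cT_z,(R(u))_{u\in\cL\cap\cT_z})$ is $G_{\cK_z,z}$, i.e.\ the law under $A_{\cK_z}$. Since the random match is independent of $\cF_n$ and of $(\cT,(R(u))_{u\in\cL})$, the joint conditional law of $(R(z),y_z,\cK_{zy_z},R(y_z))$ given $\cF_n$ coincides with the joint law of $(R(z),y,\cK_{zy},R(y))$ given $A_{\cK_z}$ used in Definition \ref{dfms}. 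Since $R_{n+1}^z(z)=\Pr(R(z)=1|\cF_n\vee\cG')$ and $R_n(z)=m(\cK_z,z)$ by Proposition \ref{rn}, the identification $\Delta_n^z(z)=s(\cK_z,z)$ is immediate.

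For (iii), I would apply \eqref{rnrec} to $f(x)$ at both levels $n$ and $n+1$. The crucial structural fact is that for every sibling $u\in\cH_x$ the subtree $\TT_u$ is disjoint from $\TT_z$, since $u$ branches off the path $B_{rz}$. Hence the extra information about $\TT_z$ encoded in $\cF_{n+1}^z$ does not concern $\TT_u$, and a short backward induction down $\TT_u$ shows $R_{n+1}^z(u)=R_n(u)$ for every $u\in\cH_x$. Splitting on $t(f(x))\in\{0,1\}$, the products over $\cC_{f(x)}=\{x\}\cup\cH_x$ in \eqref{rnrec} separate the term involving $x$ (which differs between the two levels) from the sibling product (which equals $U_n(x)$ by \eqref{dfu} and is unchanged). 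A direct computation in both cases gives $R_{n+1}^z(f(x))-R_n(f(x))=U_n(x)(R_{n+1}^z(x)-R_n(x))$. Since $U_n(x)$ is $\cF_n$-measurable, squaring and taking $\E[\cdot|\cF_n]$ yields (iii). Part (iv) then follows by iterating (iii) from $z$ up to $r$, with (ii) providing the base case.

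The genuinely delicate step is (ii), where one must pin down carefully that the joint conditional law of $(R(z),\cG')$ given $\cF_n$ matches the law under $A_{\cK_z}$ that features in Definition \ref{dfms}; this is where Lemma \ref{cru}(i) does the real work. The rest is either a routine expansion of a square (part (i)) or a clean algebraic manipulation enabled by the disjointness of sibling subtrees from $\TT_z$ (parts (iii) and (iv)).
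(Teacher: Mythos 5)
Your proposal is correct and follows essentially the same route as the paper: part (i) is the same orthogonal-decomposition identity (your $R_n(r)-R_n(r)^2$ is just the paper's $\E[(R_n(r)-R(r))^2|\cF_n]$ evaluated explicitly), part (ii) rests on Lemma \ref{cru}(i) exactly as in the paper, and part (iii) uses the same conditional independence of sibling subtrees (the paper recomputes $\Pr(R(f(x))=1|\cF_n\lor\cG)$ directly, while you invoke Proposition \ref{rn} at level $n+1$ together with $R_{n+1}^z(u)=R_n(u)$ for $u\in\cH_x$ — equivalent uses of the same facts). Part (iv) is handled identically by iterating (iii) from the base case (ii).
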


\begin{proof}
Point (i) is not difficult: for all $z \in \cD_{\bx_n}$,
\begin{align*}
\E[(R_{n}(r)-R(r))^2\vert\cF_n]=& \E[(R_{n+1}^z(r)-R(r))^2\vert\cF_n] 
+ \E[(R_{n+1}^z(r)-R_n(r))^2\vert\cF_n]\\
&+2\E[(R_{n+1}^z(r)-R(r))(R_n(r)-R_{n+1}^z(r))\vert\cF_n].
\end{align*}
Using that $\cF_n \subset \cF_{n+1}^z$ and that $R_{n+1}^z(r)=\E[R(r)|\cF_{n+1}^z]$, we conclude that
$$
\E[(R_{n}(r)-R(r))^2\vert\cF_n]= \E[(R_{n+1}^z(r)-R(r))^2\vert\cF_n] 
+ \E[(R_{n+1}^z(r)-R_n(r))^2\vert\cF_n].
$$
Since the left hand side does not depend on $z$, minimizing $\E[(R_{n+1}^z(r)-R(r))^2\vert\cF_n]$
is equivalent to maximizing $\E[(R_{n+1}^z(r)-R_n(r))^2\vert\cF_n]$.

Also, point (iv) immediately follows from points (ii) and (iii). 

To check points (ii) and (iii), we fix $z\in\cD_{x_n}$. We recall that
$\cF_{n+1}^z=\cF_n \lor \cG$, where $\cG=\sigma(y,\cK_{zy},R(y))$, where $y$ is the leave resulting
from a uniformly random match starting from $z$ and where 
$\cK_{zy}=\cK_y \cap \TT_z$. We also recall
that $R_{n+1}^z(x)=\Pr(R(x)=1|\cF_{n+1}^z)$ for all $x \in B_{\bx_n}\cup\cD_{\bx_n}$.

We know from Lemma \ref{cru} that 
the law of $(\cT_z,(R(y))_{y \in \cL \cap \cT_z})$ knowing $\cF_n$ is $G_{\cK_z,z}$.
Recalling Definition \ref{dfms} and Remark \ref{ts}, we immediately deduce that
$R_n(z)=m(\cK_z,z)$ and that 
$$
\Delta_n^z(z)=\E[(R_{n+1}^z(z)-R_n(z))^2\vert\cF_n]=s(\cK_z,z).
$$
This proves (ii). To prove (iii), we fix $x \in B_{rz}\setminus \{r\}$ and 
we set $v=f(x)$. By Lemma \ref{cru},
the family $((\cT_y,(R(u))_{u \in \cL \cap \cT_y}),y \in \cC_v)$ is independent
conditionally on $\cF_n$. Furthermore, $\cG$, which only concerns 
$(\cT_x,(R(u))_{u \in \cL \cap \cT_x})$ is independent of the family 
$((\cT_y,(R(u))_{u \in \cL \cap \cT_y}),y \in \cH_x)$. Finally,
we recall that for all $y \in \cC_v$, $R(y)$ is $\sigma(\cT_y,(R(u))_{u \in \cL \cap \cT_y})$-measurable.

If $t(v)=0$,
$$
R_{n+1}^z(v)=\Pr(R(v)=1 | \cF_n\lor \cG)
=\Pr(\min\{R(y):y \in \cC_v\}=1 \vert \cF_n \lor \cG)
$$
whence, by conditional independence, $R_{n+1}^z(v)= \prod_{y \in \cC_v} 
\Pr(R(y)=1\vert \cF_n\lor \cG)$ and thus
\begin{align*}
R_{n+1}^z(v)=
\Big(\prod_{y \in \cH_x} \Pr(R(y)=1\vert \cF_n)\Big)\Pr(R(x)=1 | \cF_n\lor \cG)
=\Big(\prod_{y \in \cH_x} R_n(y)\Big)R_{n+1}^z(x),
\end{align*}
whence $R_{n+1}^z(v)=U_n(x)R_{n+1}^z(x)$.
Since now $R_n(v)=\prod_{y \in \cC_v} R_n(y)=U_n(x)R_n(x)$ by Proposition
\ref{rn}, we conclude that $(R_{n+1}(v)-R_n(v))^2= U_n^2(x)(R_{n+1}^z(x)-R_n(x))^2$, whence,
taking expectations conditionally on $\cF_n$, $\Delta^z_{n}(v)=U_n^2(x)\Delta^z_{n}(x)$.

If next $t(v)=1$,
\begin{align*}
R_{n+1}^z(v)=&\Pr(R(v)=1 | \cF_n\lor \cG)
=\Pr(\max\{R(y):y \in \cC_v\}=1\Big\vert \cF_n \lor \cG),
\end{align*}
so that $R_{n+1}^z(v)=1-\prod_{y \in \cC_v} \Pr(R(y)=0\vert \cF_n\lor \cG)$. We conclude that
\begin{align*}
R^z_{n+1}(v)=&
1 - \Big(\prod_{y \in \cH_x} \Pr(R(y)=0\vert \cF_n)\Big)\Pr(R(x)=0 | \cF_n\lor \cG)\\
=&1- \Big(\prod_{y \in \cH_x} (1-R_n(y))\Big) (1-R_{n+1}^z(x)),
\end{align*}
so that $R^z_{n+1}(v)= 1- U_n(x)(1-R_{n+1}^z(x))$.
Since finally $R_n(v)=1-\prod_{y \in \cC_v} (1-R_n(y))=1-U_n(x)(1-R_n(x))$ by Proposition
\ref{rn}, $(R_{n+1}^z(v)-R_n(v))^2= U_n^2(x)(R_{n+1}^z(x)-R_n(x))^2$, whence,
taking expectations conditionally on $\cF_n$,
$\Delta^z_{n}(v)=U_n^2(x)\Delta^z_{n}(x)$.
\end{proof}

We now have all the weapons to give the

\begin{proof}[Proof of Theorem \ref{mr}]
Recall that we work under Setting \ref{n1} and that we adopt Notation \ref{n2}
in which $U_n$, $Z_n$ and $\cF^{n+1}_z$ are defined.
For $x\in B_{\bx_n}\cup\cD_{\bx_n}$, we set $\bar U_n(x)=\prod_{B_{rx}\setminus\{r\}} U_n(y)$,
with the convention that $\bar U_n(r)=1$.

{\it Step 1.} In view of the explicit formula for $\Delta_n^z(r)$ checked in Proposition \ref{minoumax}-(iv), the
natural way to find $z_*$ maximizing $\Delta_n^z(r)$ is to start from $r$ and to go down in 
$B_{\bx_n} \cup\cD_{\bx_n}$ following the maximum values of $(N_n(x))_{x\in B_{\bx_n} \cup\cD_{\bx_n}}$ defined as follows.
Set $N_n(x)=0$ for $x\in \bx_n$, set $N_n(x)=(\bar U_n(x))^2 s(\cK_x,x)$ 
for $x\in\cD_{\bx_n}$ and put
$N_n(x)=\max\{N_n(y):y \in \cC_x\}$ for $x \in B_{\bx_n}\setminus\bx_n$.

We claim that $N_n(x)=(\bar U_n(x))^2 Z_n(x)$ for all $x\in B_{\bx_n}\cup\cD_{\bx_n}$.

Indeed, set $\tN_n(x)=(\bar U_n(x))^2 Z_n(x)$ and recall Notation \ref{n2}-(ii).
We obviously have 
$\tN_n(x)=N_n(x)$ for $x\in \bx_n$ (because then $Z_n(x)=0$) and for $x\in\cD_{\bx_n}$
(because then $Z_n(x)=s(\cK_x,x)$). 
And for $x\in B_{\bx_n}\setminus\bx_n$,
we have $\tN_n(x)=(\bar U_n(x))^2\max \{(U_n(y))^2Z_n(y) : y\in\cC_x\}
=\max \{\tN_n(y) : y\in\cC_x\}$, because for $y \in \cC_x$, we have
$\bar U_n(x) U_n(y)=\bar U_n(y)$. The claim follows by (backward) induction.

{\it Step 2.} We define $z^* \in \cD_{\bx_n}\cup\bx_n$ as follows. Put $y_0=r$. Find
$y_1=\arg\!\max\{N_n(y):y\in\cC_{y_0}\}$. If $y_1 \in \cD_{\bx_n}\cup\bx_n$, set $z^*=y_1$.
Else, put $y_2=\arg\!\max\{N_n(y):y\in\cC_{y_1}\}$. If $y_2 \in \cD_{\bx_n}\cup\bx_n$, 
set $z^*=y_2$.
Else, put $y_3=\arg\!\max\{N_n(y):y\in\cC_{y_2}\}$, etc.

By construction, $z^*=\arg\!\max\{N_n(z): z \in \cD_{\bx_n}\cup\bx_n\}$.
Also, $N_n(x)=N_n(r)$ for all $x \in B_{rz^*}$.

{\it Step 3.} We recall that  $z_* \in \cD_{\bx_n}\cup\bx_n$ was defined, similarly, as follows: put $y_0=r$ and find
$y_1=\arg\!\max\{U_n^2(y)Z_n(y):y\in\cC_{y_0}\}$. If $y_1 \in \cD_{\bx_n}\cup\bx_n$, set 
$z_*=y_1$. Else, put $y_2=\arg\!\max\{U_n^2(y)Z_n(y):y\in\cC_{y_1}\}$. 
If $y_2 \in \cD_{\bx_n}\cup\bx_n$, set $z_*=y_2$.
Else, put $y_3=\arg\!\max\{U_n^2(y)Z_n(y):y\in\cC_{y_2}\}$, etc.

{\it Step 4.} We now prove that if $N_n(r)>0$, then $z_*=z^*$. 

First observe that 
for any $x\in B_{\bx_n}\setminus\bx_n$ such that $\bar U_n(x)>0$,
\begin{align}\label{ttaacc}
\arg\!\max\{(U_n(y))^2Z_n(y):y \in \cC_x\}
=&\arg\!\max\{ (\bar U_n(x))^2(U_n(y))^2Z_n(y):y \in \cC_x\}\\
=&\arg\!\max\{ (\bar U_n(y))^2Z_n(y):y \in \cC_x \}\notag\\
=&\arg\!\max\{ N_n(y):y \in \cC_x \}.\notag
\end{align}
Furthermore, we have $N_n(x)=N_n(r)$ for all $x\in B_{rz^*}$. Hence if 
$N_n(r)>0$, then $\bar U_n(x)>0$ for all $x\in B_{rz^*}$ (recall that $N_n(x)=(\bar U_n(x))^2 Z_n(x)$). Consequently, 
\eqref{ttaacc} holds true during the whole computation of 
$z^*$, so that $z_*=z^*$.

{\it Step 5.} 
By Steps 2 and 4, $z_*=z^*=\arg\!\max\{N_n(z): z \in \cD_{\bx_n}\cup\bx_n\}$ on $\{N_n(r)>0\}$.
And we know from Proposition \ref{toujconv} and Lemma \ref{msok} that on $\{R_n(r)\notin \{0,1\}\}$,
$z_* \in \cD_{\bx_n}$. We also know that $R_n(r)\notin \{0,1\}$ implies that
$Z_n(r)>0$ (see Step 2 of the proof of Proposition \ref{toujconv}), whence $N_n(r)=Z_n(r)>0$
(recall Step 1 and that $\bar U_n(r)=1$).
Thus on $\{R_n(r)\notin \{0,1\}\}$, we have $z_*\in\cD_{\bx_n}$ and 
$z_*=\arg\!\max\{N_n(z): z \in \cD_{\bx_n}\}=\arg\!\max\{(\bar U_n(z))^2 s(\cK_z,z): z \in \cD_{\bx_n}\}$
by definition of $N_n$, see Step 1.
By Proposition
\ref{minoumax}-(iv), we conclude that on $\{R_n(r)\notin \{0,1\}\}$
\begin{align*}
z_*=
\arg\!\max\{\E[(R_{n+1}^z(r)-R_n(r))^2\vert \cF_n] : z\in \cD_{\bx_n}\},
\end{align*}
which equals $\arg\!\min\{\E[(R_{n+1}^z(r)-R(r))^2\vert \cF_n] : z\in \cD_{\bx_n}\}$
by Proposition \ref{minoumax}-(i). We have verified that on $\{R_n(r)\notin \{0,1\}\}$,
$z_* \in \cD_{\bx_n}$ and $z_*=\arg\!\min\{\E[(R_{n+1}^z(r)-R(r))^2\vert \cF_n] : z\in \cD_{\bx_n}\}$,
which was our goal.
\end{proof}

\begin{rk}\label{update}
As seen in the proof, the natural way to find $z_*$ would be to start from $r$
and to go down in the tree following the highest values of $N_n$.
Recalling the discussion of Subsection \ref{qu},
this would lead to an algorithm with cost of order $Kdn^2$: since (generally)
$R_{n+1}(x)\neq R_n(x)$ for $x \in B_{rx_{n+1}}$, this (generally) modifies the value of 
$\bar U_n(x)$ for all $x\in B_{\bx_n}\setminus B_{rx_{n+1}}$
(actually, except for $x\in B_{\bx_n}\cap B_{rx_{n+1}}$) and thus the values of $N_n(x)$ for all
$x$ on the whole explored tree $B_{\bx_n}\cup\cD_{\bx_n}$. 
The observation \eqref{ttaacc}, which asserts that $\arg\!\max\{N_n(y) : y \in\cC_x\}=
\arg\!\max\{(U_n(y))^2Z_n(y):y \in \cC_x\}$ is thus crucial, 
as well as the fact that $U_n$ and $Z_n$ enjoy a quick update property.
\end{rk}

\section{Computation of the parameters for a few specific models}\label{comput}

Here we present a few models for the tree $\cT$ and the outcomes $(R(x))_{x\in\cL}$
where our assumptions are met and where we can compute, at least numerically, the functions
$m$ and $s$ introduced in Definition \ref{dfms}. Recall that these functions
are necessary to implement Algorithm \ref{ouralgo}.

\subsection{Inhomogeneous Galton-Watson trees}\label{stm1}

We assume that $\cT$ is the realization of an inhomogeneous Galton-Watson tree
with reproduction laws $\mu_0,\dots,\mu_K$: the number of children of the root $r$ follows the law
$\mu_0\in\cP(\nn)$, the number of children of these children are independent and $\mu_1$-distributed, etc.
We assume that
$\mu_K=\delta_0$, so that any individual of generation $K$ is a leave and thus $K$ is the maximal
depth of $\cT$. 

We also consider a family $q_0,\dots,q_K$ of numbers in $[0,1]$.
Conditionally on $\cT$, we assume that the family
$(R(x))_{x\in\cL}$ is independent and that  $R(x)\sim$ Bernoulli$(q_{|x|})$ for all $x\in\cL$.

\begin{ex}\label{tm1}
With such a model, Assumption \ref{as} is fulfilled, and for any $S\in S_f$ and $x \in L_S$
such that $\Pr(A_S)>0$, we have 
$m(S,x)=\cm(|x|)$ and $s(S,x)=\cs(|x|)$, where 

(i) $\cm$ is defined by backward induction by $\cm(K)=q_K$ and, for $k=0,\dots,K-1$,
$$
\cm(k)=\mu_k(0)q_k+ \sum_{\ell\geq 1}\mu_k(\ell) \Big(\indiq_{\{k \hbox{ \tiny is odd}\}}(\cm(k+1))^\ell 
+ \indiq_{\{k \hbox{ \tiny is even}\}}(1-[1-\cm(k+1)]^\ell)
\Big),
$$

(ii) $\cs$ is defined by backward induction by $\cs(K)=q_K(1-q_K)$ and, 
for $k=0,\dots,K-1$,
\begin{align*}
\cs(k)=&\mu_k(0)[q_k(1-\cm(k))^2+(1-q_k)(\cm(k))^2]\\
& +\indiq_{\{k \hbox{ \tiny is odd}\}} \sum_{\ell\geq 1}\mu_k(\ell) \Big[(\cm(k+1))^{2\ell-2}\cs(k+1) + 
\big((\cm(k+1))^\ell-\cm(k)\big)^2\Big] \\
& + \indiq_{\{k \hbox{ \tiny is even}\}}\sum_{\ell\geq 1}\mu_k(\ell) \Big[(1-\cm(k+1))^{2\ell-2}\cs(k+1) + 
\big((1-\cm(k+1))^\ell-(1-\cm(k))\big)^2\Big].
\end{align*}
\end{ex}

These quantities can be computed once for all if one knows the parameters
$\mu_0,\dots,\mu_K$ and $q_0,\dots,q_K$ of the model. If unknown, as is generally the case, these parameters 
can be evaluated
numerically quite precisely, handling an important number of uniformly random matches. From these
evaluations, we can derive some approximations of $\cm$ and $\cs$.
However, the main problem is of course that in general, assuming that the true game is the realization
of such a model is not very realistic.

\begin{proof}
First, Assumption \ref{as} is satisfied, thanks to the classical branching property of Galton-Watson
trees. Indeed,
consider $S\in\cS_f$ and $x\in L_S$ such that $\Pr(A_S)>0$. Conditionally on $A_S$,
we can write $\cT=S \cup \bigcup_{x\in L_S}\cT_x$ and
the family $((\cT_x,(R(y))_{y\in\cL \cap \cT_x}, x\in L_S)$ is independent by construction.
Furthermore, for any $x\in L_S$, the law $G_{S,x}$ of  $(\cT_x,(R(y))_{y\in\cL \cap \cT_x}$ knowing $A_S$
depends only on the depth $|x|$.

Consequently, there are $(\cm(k))_{k=0,\dots,K}$ and $(\cs(k))_{k=0,\dots,K}$ such that
for $S\in\cS_f$ and $x\in L_S$ with $\Pr(A_S)>0$, $m(S,x)=\cm(|x|)$ and $s(S,x)=\cs(|x|)$.

If $|x|=K$, then $x$ is necessarily a leave, so that $\Pr(x\in\cL|A_S)=1$, whence, by Lemma \ref{hor},
$\cm(K)=m(S,x)=\Pr(x\in\cL,R(x)=1|A_S)=q_K$ and
$$
\cs(K)=s(S,x)=\Pr(x\in\cL,R(x)=1|A_S)(1-m(S,x))^2+\Pr(x\in\cL,R(x)=0|A_S)(m(S,x))^2,$$
which equals $q_K(1-q_K)^2+(1-q_K)q_K^2=q_K(1-q_K)$ as desired.

Finally, the proof can be completed by using Lemma \ref{hor} and that 
if $|x|=k\in\{0,\dots,K-1\}$ and if for example $t(x)=0$ (i.e. $k$ is odd),
for any $\ell\geq1$,

$\bullet$ $\Pr(x\in\cL,R(x)=1|A_S)=\mu_k(0)q_k$ and $\Pr(x\in\cL,R(x)=0|A_S)=\mu_k(0)(1-q_k)$,

$\bullet$ $\sum_{\by\subset \CC_x,|\by|=\ell}\Pr(\cC_x=\by|A_S)=\mu_k(\ell)$,

$\bullet$ $\Theta(S,x,\by)$ and $\Gamma(S,x,\by,y)$
depend only on $k$ and on $\ell=|\by|$ and (since $t(x)=0$), 
$\Theta(S,x,\by)=[\cm(k+1)]^{\ell}$ and $\Gamma(S,x,\by,y) = \cs(k+1) [\cm(k+1)]^{2\ell-2}+ 
[(\cm(k+1))^\ell - \cm(k)]^2$.
\end{proof}

\begin{rk}\label{pearl}
In Pearl's model \cite{p}, $\cT$ is the deterministic regular tree with degree $d\geq 2$ and depth $K\geq 1$
and the family $(R(x))_{x\in\cL}$ is i.i.d. Bernoulli$(p)$-distributed.
This is a particular case of Example \ref{tm1} with 
$\mu_0=\dots=\mu_{K-1}=\delta_d$ and $\mu_K=\delta_0$ and $q_K=p$ (the values of $(q_k)_{k=0,\dots,K-1}$ being
irrelevant). One thus finds $\cm(K)=p$, $\cs(K)=p(1-p)$ and, for $k=0,\dots,K-1$,
\begin{align*}
\cm(k)=&\indiq_{\{k \hbox{ \tiny is odd}\}}(\cm(k+1))^d+\indiq_{\{k \hbox{ \tiny is even}\}}(1-[1-\cm(k+1)]^d),\\
\cs(k)=&\indiq_{\{k \hbox{ \tiny is odd}\}}(\cm(k+1))^{2d-2}\cs(k+1)
+\indiq_{\{k \hbox{ \tiny is even}\}}(1-\cm(k+1))^{2d-2}\cs(k+1).
\end{align*}
\end{rk}

\subsection{Inhomogeneous Galton-Watson trees of order two}\label{stm2}

Here we mention that we can also deal
with random trees that enjoy some independence properties without being Galton-Watson trees. 
For example, the following model \emph{of order $2$} allows one to build a broad 
class of random trees
with non-increasing degree (along each branch), which might be useful for real games. 
It is possible
to treat some models of higher order, but the functions $m$ and $s$ then become really 
tedious to compute 
theoretically and to approximate in practice.

We consider a family of probability measures on $\nn$: $\mu_0$ and $\mu_{k,d}$ for $k=1,\dots,K$ and $d\geq 1$.
We assume that $\mu_{K,d}=\delta_0$ for all $d\geq 1$
and $K$ will represent the maximum depth of the tree. 

We build the random tree $\cT$ as follows:
the root has $D_r\sim \mu_0$ children. Conditionally on $D_r$,
all the children $x$ of the root produce, independently, a number $D_x\sim \mu_{1,D_r}$ of children.
Once everything is built up to generation $k\in\{0,\dots,K-1\}$,
all the individuals $x$ with $|x|=k$ produce, independently (conditionally on what is 
already built), a number $D_x\sim \mu_{k,D_{f(x)}}$ of children. 

We also consider a family $q_0,\dots,q_K$ of numbers in $[0,1]$.
Conditionally on $\cT$, we assume that the family
$(R(x))_{x\in\cL}$ is independent and that $R(x)\sim$ Bernoulli$(q_{|x|})$ for all $x\in\cL$.

\begin{ex}\label{tm2}
With such a model, Assumption \ref{as} is fulfilled, and for any $S\in S_f$ (with $\{r\}\subsetneq S$)
such that $\Pr(A_S)>0$ and any $x\in L_S$, we have 
we have $m(S,x)=\cm(|x|,|C^S_{f(x)}|)$ and $s(S,x)=\cs(|x|,|C^S_{f(x)}|)$, where 
$\cm$ and $\cs$ can be computed by
backward induction as follows.

(i) $\cm(K,d)=q_K$ for all $d\geq 1$ and, for $k=1,\dots,K-1$ and $d\geq 1$,
$$
\cm(k,d)=\mu_{k,d}(0)q_k+ \sum_{\ell\geq 1}\mu_{k,d}(\ell) \Big[\indiq_{\{k \hbox{ \tiny is odd}\}}
(\cm(k+1,\ell))^\ell 
+ \indiq_{\{k \hbox{ \tiny is even}\}}(1-[1-\cm(k+1,\ell)]^\ell)
\Big].
$$

(ii) $\cs(K,d)=q_K(1-q_K)$ for all $d\geq 1$ and, 
for $k=1,\dots,K-1$ and $d\geq 1$,
\begin{align*}
\cs(&k,d)=\mu_{k,d}(0)[q_k(1-\cm(k,d))^2+(1-q_k)(\cm(k,d))^2]\\
& +\indiq_{\{k \hbox{ \tiny is odd}\}} \sum_{\ell\geq 1}\mu_{k,d}(\ell) 
\Big[(\cm(k+1,\ell))^{2\ell-2}\cs(k+1,\ell) + 
\big((\cm(k+1,\ell))^\ell-\cm(k,d)\big)^2\Big] \\
& + \indiq_{\{k \hbox{ \tiny is even}\}}\sum_{\ell\geq 1}\mu_{k,d}(\ell) 
\Big[(1-\cm(k+1,\ell))^{2\ell-2}\cs(k+1,\ell) + 
\big((1-\cm(k+1,\ell))^\ell-(1-\cm(k,d))\big)^2\Big].
\end{align*}
\end{ex}

We could easily express $m(\{r\},r)$ and $s(\{r\},r)$, but these values are useless as far as
Algorithm \ref{ouralgo} is concerned.

\begin{proof} First, Assumption \ref{as} is satisfied.
Indeed, consider $S\in\cS_f$ and $x\in L_S$ such that $\Pr(A_S)>0$. Conditionally on $A_S$,
we can write $\cT=S \cup \bigcup_{x\in L_S}\cT_x$ and
the family $((\cT_x,(R(y))_{y\in\cL \cap \cT_x}, x\in L_S)$ is independent by construction.
Furthermore, for any $x\in L_S$, the law $G_{S,x}$ of  $(\cT_x,(R(y))_{y\in\cL \cap \cT_x}$ knowing $A_S$
depends only on the depth $|x|$ and of $|C_{f(x)}^S|$ (except if $S=\{r\}$ and $x=r$).

Consequently, there are $(\cm(k,d))_{k=1,\dots,K,d\geq 1}$ and $(\cs(k,d))_{k=1,\dots,K,d\geq 1}$ such that
for $S\in\cS_f$ and $x\in L_S$ with $|C^S_{f(x)}|=d$,
$\Pr(A_S)>0$, $m(S,x)=\cm(|x|,d)$ and $s(S,x)=\cs(|x|,d)$.

If $|x|=K$, then $x$ is necessarily a leave, so that $\Pr(x\in\cL|A_S)=1$, whence, by Lemma \ref{hor},
$\cm(K,d)=m(S,x)=\Pr(x\in\cL,R(x)=1|A_S)=q_K$ and
$$
\cs(K,d)=s(S,x)=\Pr(x\in\cL,R(x)=1|A_S)(1-m(S,x))^2+\Pr(x\in\cL,R(x)=0|A_S)(m(S,x))^2,$$
which equals $q_K(1-q_K)^2+(1-q_K)q_K^2=q_K(1-q_K)$ as desired.

Finally, the proof can be completed by using Lemma \ref{hor} and that 
if $|x|=k\in\{0,\dots,K-1\}$ (and $|C^S_{f(x)}|=d$) and if for example $t(x)=0$ (i.e. $k$ is odd),
for any $\ell\geq 1$,

\noindent $\bullet$ $\Pr(x\in\cL,R(x)=1|A_S)=\mu_{k,d}(0)q_k$ and $\Pr(x\in\cL,R(x)=0|A_S)=\mu_{k,d}(0)(1-q_k)$,

\noindent $\bullet$ $\sum_{\by\subset \CC_x,|\by|=\ell}\Pr(\cC_x=\by|A_S)=\mu_{k,d}(\ell)$,

\noindent $\bullet$ $\Theta(S,x,\by)$ and $\Gamma(S,x,\by,y)$
depend only on $k,d$ and $\ell=|\by|$ and, if e.g. $t(x)=0$ (i.e. $k$ is odd), 
$\Theta(S,x,\by)=[\cm(k+1,\ell)]^{\ell}$ and $\Gamma(S,x,\by,y) = \cs(k+1,\ell) [\cm(k+1,\ell)]^{2\ell-2}
+ {[(\cm(k+1,\ell))^\ell - \cm(k,d)]^2}$.

The last point uses that if $\cC_x=\by$ with $|\by|=\ell$, then $|\cC_{f(y)}|=|\cC_x|=\ell$
for all $y \in\by$.
\end{proof}

\subsection{Symmetric minimax values}\label{stm3}
Here we discuss the formulas introduced in Subsection \ref{pchoice}.

We fix some value $a\in (0,1)$. For $S\in\cS_f$, we build the family 
$(m_a(S,x))_{x\in S}$ by induction, starting from the root, setting $m_a(S,r)=a$ and, 
for all $x\in S\setminus \{r\}$,
\begin{equation}\label{q}
m_a(S,x)=\indiq_{\{t(f(x))=0\}} [m_a(S,f(x))]^{1/|C^S_{f(x)}|}+\indiq_{\{t(f(x))=1\}}
(1-[1-m_a(S,f(x))]^{1/|C^S_{f(x)}|}).
\end{equation}
Observe that $m_a(S,x)$ actually depends only on $K_x^S$, i.e. $m_a(S,x)=m_a(K^S_x,x)$

\begin{ex}\label{tm3}
Consider a possibly random tree $\cT\in \cS_f$ enjoying the property that for any 
$S\in\cS_f$ with leaves $L_S$, the family
$(\cT_x)_{x\in L_S}$ is independent conditionally on $A_S=\{S\subset \cT,\cD_{L_S}=\emptyset\}$
as soon as $\Pr(A_S)>0$.
Fix $a\in (0,1)$ and assume that conditionally on $\cT$,
the family $(R(y))_{y\in \cL}$ is independent and $R(y)\sim$ Bernoulli$(m_a(\cT,y))$ 
for all $y\in\cL$.
Then Assumption \ref{as} is fulfilled, and 
for any $S\in S_f$ such that $\Pr(A_S)>0$ and any $x\in L_S$,
we have $m(S,x)=m_a(S,x)$. We are generally not able to compute $s(S,x)$.
\end{ex}

Observe that this is a qualitative \emph{symmetry} assumption, 
saying that knowing $\cT$, for any $v\in\cT\setminus \cL$,
the family of the minimax values $(R(x),x \in \cC_v)$ is i.i.d. Once this is assumed,
the only remaining parameter is the mean minimax rating
of the root (which we set to $a$).

Once the value $a=m_a(\cT,r)$ is chosen (even if not knowing $\cT$), it is easy to make the algorithm 
compute the necessary values of $m_a$, as explained in Subsection \ref{pchoice}: 
each time a new node $x$ of $\cT$ is created by the algorithm,
we can compute $m_a(\cK_x,x)$ from $m_a(\cK_{f(x)},f(x))$ and $|\cC_{f(x)}|$.

\begin{proof}
Assumption \ref{as} is satisfied because (a) the 
random tree $\cT$ is supposed to satisfy the required independence property and (b) conditionally on $\cT$,
for any $x\in\cL$, $m_a(\cT,x)$ depends only on $\cK_x$. 

It remains to verify that $m(S,x)=m_a(S,x)$
for all $S\in\cS_f$ of which $x$ is a leave.

We first show by backward induction that $\Pr(R(x)=1|\cT)=m_a(\cT,x)$ for all $x\in \cT$. 
First, this is obvious
if $x\in \cL$ by construction. Next, if this is true for all the children (in $\cT$) of
$x\in\cT\setminus\cL$ with e.g. $t(x)=1$, then we have 
$\Pr(R(x)=1|\cT)=1-\prod_{y\in\cC_x}\Pr(R(y)=0|\cT)
=1-\prod_{y\in\cC_x}(1-m_a(\cT,y))$. We first used that the family $(R(y))_{y\in\cC_x}$
is independent conditionally on $\cT$ and then the induction assumption.
Using finally \eqref{q} (recall that $t(x)=1$ and that $f(y)=x$ for all $y\in\cC_x$), we find
$\Pr(R(x)=1|\cT)=1-\prod_{y\in\cC_x}(1-[1-(1-m_a(\cT,x))^{1/|\cC_x|}])=m_a(\cT,x)$.

Fix now $S\in S_f$ such that $\Pr(A_S)>0$, where $A_S=\{S\subset\cT, \cD_{L_S}=\emptyset\}$ and $x\in L_S$.
Since $A_S\in\sigma(\cT)$, we deduce that $m(S,x)=\Pr(R(x)=1|A_{S})=\E[\Pr(R(x)=1|\cT)|A_S]
=\E[m_a(\cT,x)|A_S]$. But we know that $m_a(\cT,x)=m_a(\cK_x,x)$. Since $\cK_x=K_x^S$ on $A_S$,
we conclude that $m(S,x)=m_a(K_x^S,x)=m_a(S,x)$ as desired.
\end{proof}

Let us mention that Pearl's model, which we already interpreted as a particular case of
Example \ref{tm1}, can also be seen as a particular case of Example \ref{tm3}, where we can furthermore
compute $s$.

\begin{rk}\label{pearl2} 
Consider again Pearl's model \cite{p}: $\cT$ is the deterministic regular tree with degree $d\geq 2$ 
and depth $K\geq 1$ and the family $(R(x))_{x\in\cL}$ is i.i.d. Bernoulli$(p)$-distributed.
Then we already know that for all $S\in \cS_f$ and $x \in L_S$ such that
$\Pr(A_S)>0$, we have $m(S,x)=\cm(|x|)$ and $s(S,x)=\cs(|x|)$, with $\cm$ and $\cs$ as in 
Remark \ref{pearl}. 
One then also has, for all
$S\in \cS_f$ and $x \in L_S$ such that $\Pr(A_S)>0$, if $x\neq r$, denoting by $v=f(x)$
and $S_v=S\setminus C^S_v$,
\begin{align*}
m(S,x)=&\indiq_{\{t(v)=0\}}[m(S_v,v)]^{1/|C^S_{v}|}+\indiq_{\{t(v)=1\}}
(1-[1-m(S_v,v)]^{1/|C^S_v|}),\\
s(S,x)=&\Big(\indiq_{\{t(v)=0\}}m(S_v,v)+\indiq_{\{t(v)=1\}}[1-m(S_v,v)])\Big)
^{2(|C^S_v|-1)}\!\!s(S_v,v).
\end{align*}
Setting $a=\cm(0)$, which can be computed from $p,K,d$, we thus have $m(S,x)=m_a(S,x)$ as defined in \eqref{q},
and we can compute $s(S,x)$.
Note that it is not necessary to determine precisely $s(\{r\},r)$: we can set
$s(\{r\},r)=1$ (or any other positive constant) by Remark \ref{nvi}.
\end{rk}

Indeed, the above formulas are nothing but a complicated version of the ones in Remark \ref{pearl}, since
we have $m(S,x)=\cm(|x|)$, $s(S,x)=\cs(|x|)$, $|C^S_{v}|=d$, $m(S_v,v)=\cm(|v|)$ and  $s(S_v,v)=\cs(|v|)$.

There are other cases where we can characterize $s$, which should thus be numerically computable.

\begin{ex}\label{tm4}
Assume that $\cT$ is a homogeneous Galton-Watson tree with reproduction 
law $\mu$ such that 
$\sum_{\ell \geq 1} \ell \mu(\ell)\leq 1$ and $\mu(0)>0$, so that $\cT$ is a.s. finite.
Fix $a\in (0,1)$ and assume that conditionally on $\cT$, the family $(R(y))_{y\in \cL}$ 
is independent and that
$R(y)\sim$ Bernoulli$(m_a(\cT,y))$ for all $y\in\cL$. Then
for all $S\in S_f$ such that $\Pr(A_S)>0$ and all $x \in L_S$, 
we have $m(S,x)=m_a(S,x)$ 
and $s(S,x)=\cs(1-m_a(S,x))\indiq_{\{t(x)=0\}}+\cs(m_a(S,x))\indiq_{\{t(x)=1\}}$, 
where $\cs$ is the unique function from $[0,1]$ into $[0,1]$ such that for all $\alpha\in[0,1]$,
\begin{equation}\label{tosolve}
\cs(\alpha)=\mu(0)\alpha(1-\alpha)+ \sum_{\ell\geq 1}\mu(\ell) 
(1-\alpha)^{2(\ell-1)/\ell}\cs((1-\alpha)^{1/\ell}).
\end{equation}
\end{ex}

\begin{proof}
We already know from Example \ref{tm3} that Assumption \ref{as} is satisfied and that 
$m(S,x)=m_a(S,x)$.
Next, \eqref{tosolve} has a unique solution because the map $F:E\mapsto E$, where
$E$ is the set of all functions from $[0,1]$ into $[0,1]$, defined by
$$
F(\cs)(\alpha)=\mu(0)\alpha(1-\alpha)+ \sum_{\ell\geq 1}\mu(\ell) 
(1-\alpha)^{2(\ell-1)/\ell}\cs((1-\alpha)^{1/\ell}),
$$
is a contraction. Indeed,
$||F(\cs_1)-F(\cs_2)||_\infty \leq \kappa ||\cs_1-\cs_2||_\infty$ with
$\kappa=\sum_{\ell\geq 1}\mu(\ell)<1$.

Let us denote by $\cs(a)=s(\{r\},r)$, which clearly depends only on $a$ (and $\mu$).

For any $S \in S_f$ such that $\Pr(A_S)>0$ and $x \in L_S$,
we have $s(S,x)=\cs(m_a(S,x))$ if $t(x)=1$ and $s(S,x)=\cs(1-m_a(S,x))$ if $t(x)=0$.
Indeed, the law of $(\cT_x,(R(u))_{u\in \cL\cap\cT_x})$ conditionally on $A_S$ is the same
as that of $(\cT,(R(u))_{u\in \cL})$ (re-rooted at $x$), replacing $a$ by $m_a(S,x)$:
$\cT_x$ is a Galton-Watson tree with reproduction law $\mu$ and, knowing $A_S$ and $\cT_x$,
one easily checks that $m_a(\cT,y)=m_{m_a(S,x)}(\cT_x,y)$ for all $y\in \cL\cap \cT_x$.
Hence we have $s(S,x)=\cs(m_a(S,x))$ if $t(x)=1$. If now $t(x)=0$,
we see that $s(S,x)=\cs(1-m_a(S,x))$ by exchanging the roles of the two players.

It remains to verify that $\cs$ satisfies \eqref{tosolve}. To this end, it suffices to 
apply the formula
of Lemma \ref{hor} concerning $s$ with $S=\{r\}$ (whence $A_S=\Omega$) and $x=r$ (with $t(r)=1$)
and to observe that

\noindent $\bullet$ $s(\{r\},r)=\cs(a)$ and $m(\{r\},r)=a$,

\noindent $\bullet$ $\Pr(r\in\cL,R(r)=1)=\mu(0)a$ and $\Pr(r\in\cL,R(r)=0)=\mu(0)(1-a)$,

\noindent $\bullet$ $\sum_{\by\subset \CC_r,|\by|=\ell}\Pr(\cC_r=\by)=\mu(\ell)$,

\noindent $\bullet$ for any $\by\subset \CC_r$ with $|\by|=\ell\geq 1$, conditionally on $\cC_r=\by$,
for all $y\in\by$,
we have $m_a(\{r\}\cup \by,y)= 1-(1-a)^{1/\ell}$ and thus, since $t(y)=1$, 
$s(\{r\}\cup \by,y)=\cs((1-a)^{1/\ell})$,
whence $\Gamma(\{r\},r,\by,y)=\cs((1-a)^{1/\ell})(1-a)^{2(\ell-1)/\ell} + [(1-a)^{\ell/\ell}- (1-a)]^2
=(1-a)^{2(\ell-1)/\ell}\cs((1-a)^{1/\ell})$.
\end{proof}

It does not seem easy to solve \eqref{tosolve}. However, here is one possibility.

\begin{rk}\label{rtm3}
Assume that $\cT$ is a homogeneous Galton-Watson tree with reproduction law $\mu=(1-p)\delta_0+p\delta_d$, 
with $d\geq 2$ and $p\in (0,1/d]$. Consider the unique solution $a_0\in (0,1)$ to $a_0=(1-a_0)^{1/d}$.
Assume that conditionally on $\cT$, the family $(R(y))_{y\in \cL}$ 
is independent and that
$R(y)\sim$ Bernoulli$(m_{a_0}(\cT,y))$ for all $y\in\cL$. Then
for all $S\in S_f$ such that $\Pr(A_S)>0$ and all $x \in L_S$, 
we have $m(S,x)=m_{a_0}(S,x)$ and $s(S,x)=\cs(a_0)=(1-p)a_0^{d+1}/[1-p a_0^{2d-2}]$ is constant.
\end{rk}

Indeed, in such a case \eqref{tosolve} rewrites as 
$$
\cs(\alpha)=(1-p)\alpha(1-\alpha)+p(1-\alpha)^{2(d-1)/d}\cs((1-\alpha)^{1/d}),
$$ 
whence $\cs(a_0)=(1-p)a_0^{d+1}/[1-p a_0^{2d-2}]$.
Also,
one easily checks that for any $S\in\cS_f$ such that $\Pr(A_S)>0$ (so that $S$ is $d$-regular) 
and any $x \in L_S$,
we have $m_{a_0}(S,x)=a_0$ if $t(x)=1$ and  $m_{a_0}(S,x)=1-a_0$ if $t(x)=0$. This of course uses \eqref{q} 
and that $1-(1-a_0)^{1/d}=1-a_0$ and $(1-a_0)^{1/d}=a_0$.
Consequently, $s(S,x)$ always equals $\cs(a_0)$: $s(S,x)=\cs(m_{a_0}(S,x))=\cs(a_0)$ if $t(x)=1$ and 
$s(S,x)=\cs(1-m_{a_0}(S,x))=\cs(a_0)$ if $t(x)=0$.

\section{Global optimality fails}\label{gof}

\begin{proof}[Proof of Remark \ref{not}]
We assume here that $\cT$ is the binary tree with depth $3$.
We thus have the eight leaves $111$, $112$, $121$, $122$, $211$, $212$, $221$, $222$
(recall Subsection \ref{ssn}). We also assume that 
the family $(R(x))_{x\in \cL}$ is i.i.d. with common law Bernoulli$(1/2)$.

Observe that $\cT$ can be seen as an inhomogeneous Galton-Watson tree with reproduction laws
$\mu_0=\mu_1=\mu_2=\delta_2$ and $\mu_3=\delta_0$.
Applying Example \ref{tm1} (with $q_3=1/2$, the values of $q_0,q_1,q_2$ being irrelevant), 
we can compute the functions $m$ and $s$:
for any $S\in\cS_f$ such that $\Pr(A_S)>0$ and $x\in L_S$, we have 
$m(S,x)=\cm(|x|)$ and $s(S,x)=\cs(|x|)$, where
$$
\cm(1)=\frac{9}{16},\;\;\cm(2)=\frac{3}{4},\;\;\cm(3)=\frac 12,\quad
\cs(1)=\frac{9}{256},\;\;\cs(2)=\frac{1}{16},\;\;\cs(3)=\frac 14.
$$
The value of $\cm(0)$ and $\cs(0)$ are not useful to the algorithm. Let us however notice that
$\E[R(r)]=\cm(0)=207/256$. 

By symmetry, we can replace the uniformly random matches (starting from some $z$)
used in any admissible algorithm, see Definition \ref{proc},
by the visit of any deterministic leave (under $z$), without changing (at all) the 
performance of the algorithm.
With this slight modification,
some tedious computations show that Algorithm \ref{ouralgo},
using the above function $m$ and $s$, leads to the following strategy (and results) 
for the three first steps.

{\tt

\noindent Visit the leave $x_1=111$.

\noindent If $R(x_1)=1$ (whence $R_1(r)=228/256$), then

$\{$ visit the leave $x_2=121$.

\hskip0.35cm  If $R(x_2)=1$ (whence $R_2(r)=1$), then 

\hskip0.7cm $\{$ stop here (or visit any other leave). We have  $R_3(r)=1.$ $\}$

\hskip0.35cm  If $R(x_2)=0$ (whence $R_2(r)=200/256$), then 

\hskip0.7cm $\{$ visit the leave $x_3=122$. \!\!We have $R_3(r)=1$ if $R(x_3)=1$ 
and $R_3(r)=144/256$ 

\hskip1.1cm if $R(x_3)=0$. $\}$ $\}$

\noindent If $R(x_1)=0$ (whence $R_1(r)=186/256$), then

$\{$ visit the leave $x_2=112$.

\hskip0.35cm  If $R(x_2)=1$ (whence $R_2(r)=228/256$), then 

\hskip0.7cm $\{$ visit the leave $121$. We have  $R_3(r)=1$ if $R(x_3)=1$ and $R_3(r)=200/256$

\hskip1.1cm if $R(x_3)=0$. $\}$

\hskip0.35cm  If $R(x_2)=0$ (whence $R_2(r)=144/256$), then 

\hskip0.7cm $\{$ visit the leave $x_3=211$. We have $R_3(r)=192/256$ if $R(x_3)=1$

\hskip1.1cm and $R_3(r)=96/256$  if $R(x_3)=0$. $\}$ $\}$
}

Noting that $\E[(R_n(r)-R(r))^2]=\E[(R(r))^2]-\E[(R_n(r))^2]=207/256
-\E[(R_n(r))^2]$ because $\E[R(r)R_n(r)]=\E[(R_n(r))^2]$ (since $R_n(r)=\E[R(r)|\cF_n]$)
and since $\E[(R(r))^2]=\E[R(r)]=\cm(0)=207/256$, we conclude that 
\begin{align*}
\E[(R_1(r)-R(r))^2]=&\frac{207}{256}- \frac 12 \Big[\Big(\frac{228}{256}\Big)^2
+\Big(\frac{186}{256}\Big)^2 \Big]
=\frac{4851}{32768} ,\\
\E[(R_2(r)-R(r))^2]=&\frac{207}{256}- \frac 14 \Big[1+\Big(\frac{200}{256}\Big)^2
+\Big(\frac{228}{256}\Big)^2 
+\Big(\frac{144}{256}\Big)^2 \Big]
=\frac{2107}{16384} ,\\
\E[(R_3(r)-R(r))^2]=&\frac{207}{256}- \frac 18 \Big[1+1+1+\Big(\frac{144}{256}\Big)^2
+1+\Big(\frac{200}{256}\Big)^2+\Big(\frac{192}{256}\Big)^2+\Big(\frac{96}{256}\Big)^2    \Big]
= \frac{859}{8192}.
\end{align*}

The following strategy, that we found with the help of a computer, is less efficient in two steps 
but more efficient
in three steps. We use the notation $\tR_n(r)$ as in the statement.

{\tt
\noindent Visit the leave $\tx_1=111$.

\noindent If $R(\tx_1)=1$ (whence $\tR_1(r)=228/256$), then

$\{$ visit the leave $\tx_2=121$.

\hskip0.35cm  If $R(\tx_2)=1$ (whence $\tR_2(r)=1$), then 

\hskip0.7cm $\{$ stop here (or visit any other leave). We have  $\tR_3(r)=1.$ $\}$

\hskip0.35cm  If $R(\tx_2)=0$ (whence $\tR_2(r)=200/256$), then 

\hskip0.7cm $\{$ visit the leave $\tx_3=122$. \!\!We have $\tR_3(r)=1$ if $R(\tx_3)=1$ 
and $\tR_3(r)=144/256$ 

\hskip1.1cm if $R(\tx_3)=0$. $\}$ $\}$

\noindent If $R(\tx_1)=0$ (whence $\tR_1(r)=186/256$), then

$\{$ visit the leave $\tx_2=211$.

\hskip0.35cm  If $R(\tx_2)=1$ (whence $\tR_2(r)=216/256$), then

\hskip0.7cm $\{$ visit the leave $221$. We have  $\tR_3(r)=1$ if $R(\tx_3)=1$ and $\tR_3(r)=176/256$

\hskip1.1cm if $R(\tx_3)=0$. $\}$

\hskip0.35cm  If $\tR(\tx_2)=0$ (whence $\tR_2(r)=156/256$), then 

\hskip0.7cm $\{$ visit the leave $\tx_3=112$. We have $\tR_3(r)=216/256$ if $R(\tx_3)=1$

\hskip1.1cm and $\tR_3(r)=96/256$  if $R(\tx_3)=0$. $\}$ $\}$
}

We conclude, using the same argument as previously, that 
\begin{align*}
\E[(\tR_1(r)-R(r))^2]=&\frac{207}{256}- \frac 12 \Big[\Big(\frac{228}{256}\Big)^2
+\Big(\frac{186}{256}\Big)^2 \Big]
=\frac{4851}{32768} ,\\
\E[(\tR_2(r)-R(r))^2]=&\frac{207}{256}- \frac 14 \Big[1+\Big(\frac{200}{256}\Big)^2
+\Big(\frac{216}{256}\Big)^2 
+\Big(\frac{156}{256}\Big)^2 \Big]
=\frac{2215}{16384} ,\\
\E[(\tR_3(r)-R(r))^2]=&\frac{207}{256}- \frac 18 \Big[1+1+1+\Big(\frac{144}{256}\Big)^2
+1+\Big(\frac{176}{256}\Big)^2+\Big(\frac{216}{256}\Big)^2+\Big(\frac{96}{256}\Big)^2    \Big]
= \frac{847}{8192}.
\end{align*}
The proof is complete.
\end{proof}

\section{Numerical results}\label{numer}

\subsection{Numerical problems}
Algorithm \ref{ouralgo} is subjected to numerical problems due to the fact that it 
proceeds to a high number of multiplications of reals in $[0,1]$. 
For example, the algorithm continuously computes products of the form 
$\prod_{k=1}^d r_k$ and $1-\prod_{k=1}^d (1-r_k)$, with
$r_1,\dots,r_d \in [0,1]$. If coded naively, it immediately finds $0$ or $1$
and does not work at all. We overcame 
such problems with the change of variables $\phi(r)=\log[r/(1-r)]$.
Everywhere, we used $\phi(R)$, $\phi(U)$ and $\phi(Z)$ instead of $R$, $U$ and $Z$ 
(we mean, concerning the values $R_n(x)$, $U_n(x)$ and the $Z_n(x)$).
Actually, for large games, some numerical problems persist: at some steps,
we have numerically $(R_{n+1}(r),U_{n+1}(r),Z_{n+1}(r))=(R_n(r),U_{n}(r),Z_{n}(r))$
(even after the change of variables), which should never be the case.
However, the above trick eliminates most of them. Instead of using $\phi$, one could manipulate
simultaneously $\log r$ and $\log(1-r)$. This would be more or less equivalent, the use of $\phi$ is just
slightly more concise.

We used the following expressions. We carefully separated different cases,
because e.g. for $u$ very large (say, $u\geq 750$), the computer answers $\log(1+\exp(u))=+\infty$
but $u+\log(1+\exp(-u))=u$, these two quantities being theoretically equal.
Consider $r,s \in [0,1]$ and $u=\phi(r)$, $v=\phi(s)$. 
We e.g. assume that $0\leq s \leq r \leq 1$ (whence $-\infty\leq v\leq u\leq +\infty$) 
and we naturally allow 
$\phi(0)=-\infty$, $\phi(1)=+\infty$, $\exp(-\infty)=0$, etc.
Observe that $r+s\leq 1$ if and only if $u+v\leq 0$.

(a) $\phi(1-r)=-\phi(r)$,

(b) $\phi(rs)=\begin{cases}u+v-\log(1+e^{u}+e^{v}) & \hbox{if $u<0$,}\\
+\infty&\hbox{if $v=+\infty$,}\\
v-\log(1+e^{-u}+e^{v-u})& \hbox{if $v<+\infty$ and $u\geq 0$}.
\end{cases}$

(c) $\phi(r+s)=\begin{cases} -\infty & \hbox{if $u=-\infty$},\\
+\infty & \hbox{if $v=-\infty$ and $u=+\infty$},\\
u+\log(1+e^{v-u}+2e^v)-\log(1-e^{u+v}) &\hbox{if $u>-\infty$ and $u+v\leq 0$}. \end{cases}$

(d) $\phi(s/r)=\begin{cases} +\infty & \hbox{if $u=v$},\\
v & \hbox{if $v<u=+\infty$},\\
\log(1+e^{u})-u+v-\log(1-e^{v-u}) &\hbox{if $v<u<0$}, \\
\log(1+e^{-u})+v-\log(1-e^{v-u}) &\hbox{if $0\leq u < +\infty$ and $v<u$}.\end{cases}$

(e) $\log r = \begin{cases}
u-\log(1+e^u)&\hbox{if $u<0$,}\\
-\log(1+e^{-u})&\hbox{if $u\geq 0$.}
\end{cases}$

We can compute $\phi(r-s)=\phi(r(1-s/r))$ from $u,v$ using (a), (b) and (d).
For $r_1,\dots,r_d \in [0,1]$, we can compute
$\phi(\prod_1^d r_k)$ from the values of $u_k=\phi(r_k)$ recursively using (b), and we can get
$\phi(1-\prod_1^d (1-r_k))=-\phi(\prod_1^d (1-r_k))$ using furthermore (a). Etc.

\subsection{The algorithms}\label{thealgo}

We will make play some versions of our algorithm against the two versions of MCTS recalled
in Section \ref{smcts} on a few real games (variations of Connect Four) and on Pearl's model.

We call MCTS($a,b$) Algorithm \ref{mcts} and MCTS'($a,b$) Algorithm \ref{mctsinf} with the parameters $a,b>0$.

We call GW (resp. GW2) Algorithm \ref{ouralgo} with the functions $m$ and $s$ of Example \ref{tm1} (resp.
Example \ref{tm2}),
assuming that the game is the realization of an inhomogeneous Galton-Watson tree
(resp. inhomogeneous Galton-Watson tree of order $2$). The functions
$\cm$ and $\cs$ are computed as follows, for example in the case of GW. 
We handle a large number ($10^8$) of
uniformly random matches starting from the initial configuration of the game, this allows us
to estimate $\mu_0,\dots,\mu_K$. The values $q_0,\dots,q_K$ are trivial for 
Connect Four, since we simply have $q_k=\indiq_{\{k \hbox{ \tiny is odd}\}}$.
We then use the formulas stated in Example \ref{tm1}.
We obtain rather stable results. Of course, this is done once for all for each game.

We call Sym Algorithm \ref{ouralgo} with the function $m=m_a$ defined by \eqref{q},
with the choice $a=1/2$,
and with $s\equiv 1$. Recall that $a\in (0,1)$ is the expected minimax rating of the root.
This is the most simple and universal algorithm, although not fully theoretically justified
(see however Remark \eqref{rtm3} in Subsection \ref{stm3}).

We finally call SymP Algorithm \ref{ouralgo} with the functions $m$ and $s$ defined
in Remark \ref{pearl2}, here again with $a=1/2$. 
This is the theoretical algorithm furnished by our study in the case of Pearl's game (if $a=1/2$)
and it is precisely the same as GW in this case, see Remarks \ref{pearl} and \ref{pearl2}.

Let us now give a few precisions. 

(i) In all the experiments below, each algorithm keeps the information provided by its own simulations
handled to decide its previous moves. 
In practice, this at most doubles the quantity of information (when compared to the case
where we would delete everything at each new move), because most of the previous simulations led
to other positions.

(ii) Concerning Sym and SymP, we actually use $a=1/2$ as expected minimax value of the {\it true} root
of the game, that is the true initial position. When in another configuration $x$, we use 
$m_a(\cT,x)$ (with $a=1/2$, here $\cT$ is the tree representing the whole game) 
as expected minimax value of the {\it current} root $x$
(i.e. the current position of the game). Such a value is automatically computed when playing the game.

\subsection{The numerical experiments}

First, let us mention that we handled many trials using GW and GW2. They almost always worked
less well than Sym concerning Connect Four, so we decided
not to present those results.
Also, concerning Sym and SymP, we tried other values for the expected minimax value
$a \in (0,1)$ of the root without observing significantly better results, so we always use $a=1/2$.
Similarly, we experimented other values for $b\in \rr$ (see Subsection \ref{pchoice}, 
Sym corresponds to the case
$b=0$ and SymP to the case where $b=2$), here again without clear success.

In each subsection below (except Subsection \ref{rat}), which concerns one given game, we proceed as follows.

For each given amount of time per move, we first fit the parameters of MCTS. To this aim,
we perform a championship involving MCTS($a,b$), for all $a=k/2$, $b=\ell/2$ with $1\leq k\leq \ell \leq 10$
(we thus have $55$ players).
Each player competes $40$ times against all the other ones ($20$ times as first player, $20$ times as second one), 
and we select the player with the highest number of victories.
Observe that each player participates to $2160$ matches.
The resulting best player is rather unstable, but we believe
this is due to the fact that the competition is very tight among a few players. 
Hence even if we do not select the true best player,
we clearly select a very good one. Note that we impose $a\leq b$ because, after many trials allowing $a>b$,
the best player was always of this shape.

Of course, we do exactly the same thing to fit the parameters of MCTS'.

Then, we make our algorithm (Sym or SymP) compete against the best MCTS and the best MCTS',
$10000$ times as first player and $10000$ times as second one.

Also, we indicate the (rounded) mean number of iterations made by each algorithm {\it at the first move}.
This sometimes looks ridiculous: when e.g. playing a version of Connect Four with large degree with
$1$ millisecond per move, this mean 
number of iterations is $8$ for Sym and $11$ for MCTS. 
However, after sufficiently many moves, this mean number of iterations
becomes much higher. In other words, the algorithms more or less play at random at the beginning,
but become more and more {\it clever} as the game progresses. So in some sense,
the algorithm that wins is the one becoming clever before the other.

Finally, let us explain how to read the tables below, which are all of the same shape. 
For example, the first table, when playing a large Pearl game, is as follows.

\btp
\hline
\multicolumn{2}{|c|}
{$t=1$ ms. MCTS(3.5,5): 498, MCTS'(2,3): 442, SymP: 163}\\
\hline
SymP vs MCTS(3.5,5) & SymP vs MCTS'(2,3)\\
\hline
5882/4118(0), 4340/5660(0) & {\bf 4992/5008(0), 5221/4779(0)} \\
\hline
\etp

Each player had $1$ millisecond to decide each of its moves. The championships were won by MCTS(3.5,5)
and MCTS'(2,3). When playing its first move, MCTS(3.5,5) (resp. MCTS'(2,3), resp. SymP) 
proceeded in mean to 498 (resp. 442, resp. 163) iterations.

When playing first, SymP won 5882 times and lose 4118 times against MCTS(3.5,5), 
and there has been 0 draw.
When playing first, MCTS(3.5,5) won 4340 times and lose 5660 times against SymP,
and there has been 0 draw.

When playing first, SymP won 4992 times and lose 5008 times against MCTS'(2,3),
and there has been 0 draw. When playing first, MCTS'(2,3) won 5221 times and lose 4779 times against SymP, 
and there has been 0 draw.

Finally, the results {\bf in bold} mean that our algorithm (Sym or SymP) is beaten.

\subsection{Large Pearl game}

We first consider Pearl's game, that is the game of Example \ref{pearl} with the regular tree
with degree $d=2$ and depth $K=32$, with i.i.d. Bernoulli$(p)$ random variables on the leaves and with
$p$ such that $\E[R(r)]=1/2$ ($p$ is easily computed numerically using the formula
of Remark \ref{pearl} and that $p=\cm(K)$ and $1/2=\cm(0)$). 

Here and in the next subsection, to be as fair as possible, in each cell of the tables below,
each match is played in both senses on a given realization of the model,
so that if the two opponents were playing perfectly, one would find twice the same results
in each cell.

\btp
\hline
\multicolumn{2}{|c|}
{$t=1$ ms. MCTS(3.5,5): 498, MCTS'(2,3): 442, SymP: 163}\\
\hline
SymP vs MCTS(3.5,5) & SymP vs MCTS'(2,3)\\
\hline
5882/4118(0), 4340/5660(0) & {\bf 4992/5008(0), 5221/4779(0)} \\
\hline
\etp

\btp
\hline
\multicolumn{2}{|c|}
{$t=2$ ms. MCTS(3.5,5): 717, MCTS'(3,4.5): 579, SymP: 238}\\
\hline
SymP vs MCTS(3.5,5) & SymP vs MCTS'(3,4.5)\\
\hline
5751/4249(0), 4267/5733(0) & {\bf 4727/5273(0), 5372/4628(0)} \\
\hline
\etp

\btp
\hline
\multicolumn{2}{|c|}
{$t=4$ ms. MCTS(4,4.5): 1137, MCTS'(3.5,5): 848, SymP: 395}\\
\hline
SymP vs MCTS(4,4.5) & SymP vs MCTS'(3.5,5)\\
\hline
5921/4079(0), 4044/5956(0) & {\bf 4676/5324(0), 5306/4694(0)} \\
\hline
\etp

\btp
\hline
\multicolumn{2}{|c|}
{$t=8$ ms. MCTS(3,3.5): 2210, MCTS'(3.5,5): 1561, SymP: 693}\\
\hline
SymP vs MCTS(3,3.5) & SymP vs MCTS'(3.5,5)\\
\hline
5926/4074(0), 4080/5920(0) & {\bf 4641/6359(0), 5438/4562(0)} \\
\hline
\etp

\btp
\hline
\multicolumn{2}{|c|}
{$t=16$ ms. MCTS(4,5): 4229, MCTS'(4,4.5): 2893, SymP: 1307}\\
\hline
SymP vs MCTS(4,5) & SymP vs MCTS'(4,4.5)\\
\hline
5998/4002(0), 4012/5988(0) & {\bf 5000/5000(0), 5112/4888(0)} \\
\hline
\etp

\btp
\hline
\multicolumn{2}{|c|}
{$t=32$ ms. MCTS(4,4.5): 8277, MCTS'(4.5,5): 5922, SymP: 2473}\\
\hline
SymP vs MCTS(4,4.5) & SymP vs MCTS'(4.5,5) \\
\hline
6154/3846(0), 3781/6219(0)
& 5059/4941(0), 4923/5077(0)\\
\hline
\etp

\btp
\hline
\multicolumn{2}{|c|}
{$t=64$ ms. MCTS(4.5,4.5): 15983, MCTS'(3.5,4.5): 14399, SymP: 4766}\\
\hline
SymP vs MCTS(4.5,4.5) & SymP vs MCTS'(3.5,4.5) \\
\hline
6380/3620(0), 3628/6372(0) & 5244/4756(0), 4800/5200(0)\\
\hline
\etp

We observe that SymP is better than MCTS but beats MCTS' only when the amount of time per play is high enough.

For this game, SymP performs around 4 times fewer iterations per unit of time than MCTS.

\subsection{Small Pearl game}

We next consider a much smaller Pearl game: as previously, $d=2$ and $p$ is chosen is such that
$\E[R(r)]=1/2$, but the depth of the tree is $K=16$ (instead of $K=32$).

Here, and only here, due to the smallness of the game, we needed to modify the way we fit the parameters of 
MCTS and MCTS'. Namely, we perform a championship involving MCTS($a,b$), for all $a=k/2$, $b=\ell/2$ with 
$1\leq k\leq \ell \leq 20$ (we thus have $210$ players).
Each player competes $20$ times against all the other ones ($10$ times as first player, $10$ times as second one), 
and we select the player with the highest number of victories.
Each player participates to $4180$ matches.

Proceeding as in the previous subsection, we found the following results.

\btp
\hline
\multicolumn{2}{|c|}
{$t=1$ ms. MCTS(5,8.5): 1100, MCTS'(5,7.5): 954, SymP: 301}\\
\hline
SymP vs MCTS(5,8.5) & SymP vs MCTS'(5,7.5)\\
\hline
5242/4758(0), 3966/6034(0)
& {\bf 4334/5666(0), 4868/5132(0)} \\
\hline
\etp

\btp
\hline
\multicolumn{2}{|c|}
{$t=2$ ms. MCTS(6,10): 1637, MCTS'(5.5,9.5): 1584, SymP: 444}\\
\hline
SymP vs MCTS(6,10) & SymP vs MCTS'(5.5,9.5)\\
\hline
5104/4896(0), 4027/5973(0) & {\bf 4346/5654(0), 4899/5101(0)}\\
\hline
\etp

\btp
\hline
\multicolumn{2}{|c|}
{$t=4$ ms. MCTS(5.5,9): 2829, MCTS'(5.5,9): 2776, SymP: 725}\\
\hline
SymP vs MCTS(5.5,9) & SymP vs MCTS'(5.5,9)\\
\hline
5307/4693(0), 3953/6047(0)
& {\bf 4296/5704(0), 4797/5203(0)}\\
\hline
\etp

\btp
\hline
\multicolumn{2}{|c|}
{$t=8$ ms. MCTS(7,9.5): 4876, MCTS'(5.5,9): 5745, SymP: 1128}\\
\hline
SymP vs MCTS(7,9.5) & SymP vs MCTS'(5.5,9)\\
\hline
4968/5032(0), 3817/6183(0) & {\bf 4434/5566(0), 4826/5174(0)}\\
\hline
\etp

\btp
\hline
\multicolumn{2}{|c|}
{$t=16$ ms. MCTS(6.5,7.5): 10303, MCTS'(4.5,6): 12729, SymP: 1692}\\
\hline
SymP vs MCTS(6.5,7.5) & SymP vs MCTS'(4.5,6)\\
\hline
5132/4868(0), 4519/5481(0)
& 4950/5050(0), 4943/5057(0)\\
\hline
\etp

\btp
\hline
\multicolumn{2}{|c|}
{$t=32$ ms. MCTS(6.5,8.5): 25929, MCTS'(7.5,9): 27585, SymP: 1840}\\
\hline
SymP vs MCTS(6.8,8.5) & SymP vs MCTS'(7.5,9)\\
\hline
5034/4966(0), 5010/4990(0)
& 5083/4917(0), 5083/4917(0)\\
\hline
\etp

Here it seems that SymP almost always finds the winning strategy at the first move with $16$ ms, 
this explains why the number of iterations is so small (at $16$ and $32$ ms): 
SymP stops before $16$ ms are elapsed. We thus believe it always finds it with $32$ ms.

At $32$ ms, it seems that MCTS' also always found the winning strategy among $5083$ times it started the game
with a possible winning strategy.
MCTS missed $24$ times the winning strategy among $5034$ times it started the game
with a possible winning strategy.

Observe that SymP is better than MCTS but, here again, 
beats MCTS' only when the amount of time per play is high enough.

Also, even if the game is theoretically fair (because $\E[R(r)]=1/2$), it seems easier, for
all the algorithms, to find the winning strategy when being the second player
(this can be observed until $8$ ms). This might be explained
by the fact that the second player is the one playing the last move.

Here also, SymP performs around $4$ times fewer iterations per unit of time than MCTS. 

\subsection{Standard Connect Four}
We now play Connect Four in its usual version: we have $7$ columns, $6$ lines, 
and the goal is to connect (horizontally, vertically, or diagonally) $4$ discs.

\btp
\hline
\multicolumn{2}{|c|}
{$t=1$ ms. MCTS(1.5,1.5): 53, MCTS'(1,1): 50, Sym: 33}\\
\hline
Sym vs MCTS(1.5,1.5) & Sym vs MCTS'(1,1)\\
\hline
{\bf 4174/5809(17), 6806/3146(48)}  & {\bf 4066/5904(30), 6983/2966(51)} \\
\hline
\etp

\btp
\hline
\multicolumn{2}{|c|}
{$t=2$ ms. MCTS(1,1): 82, MCTS'(2,2.5): 76, Sym: 48}\\
\hline
Sym vs MCTS(1,1) & Sym vs MCTS'(2,2.5)\\
\hline
{\bf 3099/6873(28), 7558/2392(50)}  & {\bf 3184/6799(17), 7632/2317(51)} \\
\hline
\etp

\btp
\hline
\multicolumn{2}{|c|}
{$t=4$ ms. MCTS(2,2): 137, MCTS'(1.5,1.5): 124, Sym: 81}\\
\hline
Sym vs MCTS(2,2) & Sym vs MCTS'(1.5,1.5)\\
\hline
{\bf 2460/7527(13), 7989/1966(45)}&{\bf 2485/7500(15), 7908/2043(49)} \\
\hline
\etp

\btp
\hline
\multicolumn{2}{|c|}
{$t=8$ ms. MCTS(3,3.5): 254, MCTS'(3.5,4): 225, Sym: 146}\\
\hline
Sym vs MCTS(3,3.5) & Sym vs MCTS'(3.5,4)\\
\hline
{\bf 1923/8074(3), 8163/1791(46)}  & {\bf 2004/7991(5), 8211/1751(38)} \\
\hline
\etp

\btp
\hline
\multicolumn{2}{|c|}
{$t=16$ ms. MCTS(4,4.5): 491, MCTS'(3.5,4): 441, Sym: 280}\\
\hline
Sym vs MCTS(4,4.5) & Sym vs MCTS'(3.5,4)\\
\hline
{\bf 1353/8644(3), 8341/1599(60)}  & {\bf 1520/8477(3), 8312/1647(41)} \\
\hline
\etp

We are largely beaten, and this is worse and worse as the given amount of time increases.
Observe however the high number of draws, which indicates that even if MCTS almost always wins at the end,
the competition is tight.

Let us mention that with $1024$ ms per move, we found, for Sym vs MCTS(10,10): 
14/986(0), 925/74(1) and, for Sym vs MCTS'(10,10): 14/986(0), 924/74(2):
we are destroyed.

Sym performs here around twice fewer iterations per unit of time than MCTS.

\subsection{A version of Connect Four with small degree}

We next consider the variation of Connect Four with $4$ columns, $10$ lines, and where the goal is
to connect (horizontally, vertically, or diagonally) $3$ discs.

\btp
\hline  
\multicolumn{2}{|c|}
{$t=1$ ms. MCTS(4,5): 223, MCTS'(2,2.5): 257, Sym: 107}\\
\hline
Sym vs MCTS(4,5) & Sym vs MCTS'(2,2.5)\\
\hline
{\bf 7591/2409(0), 8685/1315(0)}&{\bf 7082/2918(0), 8944/1056(0)}\\
\hline
\etp

\btp
\hline  
\multicolumn{2}{|c|}
{$t=2$ ms. MCTS(1.5,2): 441, MCTS'(2.5,3): 432, Sym: 167}\\
\hline
Sym vs MCTS(1.5,2) & Sym vs MCTS'(2.5,3)\\
\hline
{\bf 7941/2059(0), 8805/1195(0)}
&{\bf 7585/2415(0), 9286/714(0)}\\
\hline
\etp

\btp
\hline  
\multicolumn{2}{|c|}
{$t=4$ ms. MCTS(2,2): 722, MCTS'(3.5,4): 921, Sym: 294}\\
\hline
Sym vs MCTS(2,2) & Sym vs MCTS'(3.5,4)\\
\hline
{\bf 8647/1353(0), 9181/819(0)}
&{\bf 8445/1555(0), 9553/447(0)}\\
\hline
\etp

\btp
\hline  
\multicolumn{2}{|c|}
{$t=8$ ms. MCTS(4.5,5): 2215, MCTS'(4.5,5): 2966, Sym: 530}\\
\hline
Sym vs MCTS(4.5,5) & Sym vs MCTS'(4.5,5)\\
\hline
9920/80(0), 9694/306(0)
& 9908/92(0), 9879/121(0)\\
\hline
\etp

\btp
\hline  
\multicolumn{2}{|c|}
{$t=16$ ms. MCTS(3.5,4): 7409, MCTS'(4,4): 6705, Sym: 982}\\
\hline
Sym vs MCTS(3.5,4) & Sym vs MCTS'(4,4)\\
\hline
10000/0(0), 9952/48(0) & 10000/0(0), 9983/17(0)\\
\hline
\etp

Here we observe that MCTS and MCTS' win when having a lot time (for such a small game,
$1$ millisecond is much), but when the amount of time is so high that we are close to finding
the winning strategy at the first move, Sym is better. 
This might be due to the fact that Sym automatically does some pruning.

Here Sym performs around $3$ or $4$ times fewer iterations per unit of time than MCTS.
This is not the case with $16$ ms because it finds the right move, 
and thus stops, before the 16 ms are elapsed.

\subsection{A first version of connect 4 with large degree}
Here we consider a very simple version of Connect Four, with
$15$ columns, $15$ lines, and where the goal is to connect (horizontally, vertically, or diagonally) $3$ discs.
A human player immediately finds the winning strategy when playing first.
However, for a computer, the situation is not so easy, because there are many possibilities (if not taking 
advantage of the symmetries of the game).

\btp
\hline
\multicolumn{2}{|c|}
{$t=1$ ms. MCTS(2.5,4): 21, MCTS'(1.5,2.5): 20, Sym: 16}\\
\hline
Sym vs MCTS(2.5,4) & Sym vs MCTS'(1.5,2.5)\\
\hline
8396/1604(0), 5236/4764(0) & 8453/1547(0), 5292/4708(0)\\
\hline
\etp

\btp
\hline
\multicolumn{2}{|c|}
{$t=2$ ms. MCTS(2,3.5): 32, MCTS'(2,3.5): 31, Sym: 24}\\
\hline
Sym vs MCTS(2,3.5) & Sym vs MCTS'(2,3.5)\\
\hline
8368/1632(0), 5652/4348(0) & 8355/1645(0), 5812/4188(0)   \\
\hline
\etp

\btp
\hline
\multicolumn{2}{|c|}
{$t=4$ ms. MCTS(1.5,2.5): 54, MCTS'(3,5): 52, Sym: 40}\\
\hline
Sym vs MCTS(1.5,2.5) & Sym vs MCTS'(3,5)\\
\hline
8792/1208(0), 5898/4102(0) & 8749/1251(0), 5893/4107(0) \\
\hline
\etp

\btp
\hline
\multicolumn{2}{|c|}
{$t=8$ ms. MCTS(2.5,4): 100, MCTS'(3,5): 96, Sym: 72}\\
\hline
Sym vs MCTS(2.5,4) & Sym vs MCTS'(3,5)\\
\hline
9207/793(0), 5894/4106(0)& 9219/781(0), 6032/3968(0)   \\
\hline
\etp

\btp
\hline
\multicolumn{2}{|c|}
{$t=16$ ms. MCTS(1.5,2): 194, MCTS'(3.5,5): 182, Sym: 134}\\
\hline
Sym vs MCTS(1.5,2) & Sym vs MCTS'(3.5,5)\\
\hline
9388/612(0), 7048/2952(0) & 9417/583(0), 7153/2847(0)\\
\hline
\etp

\btp
\hline  
\multicolumn{2}{|c|}
{$t=32$ ms. MCTS(3,3.5): 379, MCTS'(3,4): 368, Sym: 260}\\
\hline
Sym vs MCTS(3,3.5) & Sym vs MCTS'(3,4)\\
\hline
9656/344(0), 8430/1570(0) & 9626/374(0), 8512/1488(0)\\
\hline
\etp

\btp
\hline  
\multicolumn{2}{|c|}
{$t=64$ ms. MCTS(4,5): 781, MCTS'(2,2.5): 785, Sym: 523}\\
\hline
Sym vs MCTS(4,5) & Sym vs MCTS'(2,2.5)\\
\hline
9851/149(0), 8953/1047(0) & 9830/170(0), 8784/1216(0) \\
\hline
\etp

\btp
\hline  
\multicolumn{2}{|c|}
{$t=128$ ms. MCTS(2.5,3): 1735, MCTS'(2,2.5): 1704, Sym: 1032}\\
\hline
Sym vs MCTS(2.5,3) & Sym vs MCTS'(2,2.5)\\
\hline
9919/81(0), 9221/779(0) & 9928/72(0), 9117/883(0)\\
\hline
\etp

Here we are really better than MCTS. We believe this is due to the fact that the degree of the game is very 
large.

For this game, Sym performs around $2$ times fewer iterations per unit of time than MCTS.

\subsection{A second version of Connect Four with large degree}

We consider here the version of Connect Four with $15$ columns, $6$ lines, and where the goal is to 
connect (horizontally, vertically, or diagonally) $5$ discs. Here the situation is intractable
for a normal human player.

\btp
\hline
\multicolumn{2}{|c|}
{$t=1$ ms. MCTS(2.5,4): 11, MCTS'(3,5): 10, Sym: 8}\\
\hline
Sym vs MCTS(2.5,4) & Sym vs MCTS'(3,5) \\
\hline
6368/3631(1), 4892/5106(2) & 6521/3477(2), 4892/5104(4)\\
\hline
\etp

\btp
\hline
\multicolumn{2}{|c|}
{$t=2$ ms. MCTS(0.5,0.5): 16, MCTS'(1,1.5): 15, Sym: 12}\\
\hline
Sym vs MCTS(0.5,0.5) & Sym vs MCTS'(1,1.5) \\
\hline
6730/3270(0), 4412/5586(2) & 6793/3205(2), 4537/5458(5)\\
\hline
\etp

\btp
\hline
\multicolumn{2}{|c|}
{$t=4$ ms. MCTS(0.5,0.5): 27, MCTS'(0.5,0.5): 25, Sym: 19}\\
\hline
Sym vs MCTS(0.5,0.5) & Sym vs MCTS'(0.5,0.5) \\
\hline
6305/3692(3), 5056/4936(8) & 6479/3515(6), 5038/4959(3)\\
\hline
\etp

\btp
\hline
\multicolumn{2}{|c|}
{$t=8$ ms. MCTS(0.5,0.5): 50, MCTS'(0.5,0.5): 45, Sym: 33}\\
\hline
Sym vs MCTS(0.5,0.5) & Sym vs MCTS'(0.5,0.5) \\
\hline
6160/3821(19), 4925/5044(31) & 5966/4022(12), 5159/4808(33)\\
\hline
\etp

\btp
\hline
\multicolumn{2}{|c|}
{$t=16$ ms. MCTS(1.5,2): 95, MCTS'(1.5,2): 88, Sym: 61}\\
\hline
Sym vs MCTS(1.5,2) & Sym vs MCTS'(1.5,2) \\
\hline
{\bf 4377/5598(25), 6499/3427(74)} & {\bf 4286/5692(22), 6526/3388(86)}\\
\hline
\etp

The number of iterations seems very small here, but let us recall
that this concerns only the beginning of the game. As already mentioned, this number of iterations 
actually increases considerably when approaching the end of the game.

We observe that Sym is really better than MCTS and MCTS' when the amount of time per move is small. 
When this amount of time 
increases, we are beaten. We have
two main explanations for this, see Subsection \ref{numconcl}.

Here Sym performs around twice fewer iterations per unit of time than MCTS.

\subsection{Inverse and large Connect Four}

We next consider the inverse version of Connect Four with $15$ columns, $6$ lines, and where the first player
that connects (horizontally, vertically, or diagonally) $5$ discs looses.
Although this modification is coded very easily (only one line has to be modified so that 
{\it win} and {\it loss} are exchanged), 
this considerably modifies the game. The algorithms (MCTS, MCTS' and Sym) all take this into account 
immediately: at the beginning of a match, one usually sees the algorithms play in the middle of the board,
while they play near the extremities in the {\it inverse} case.

\btp
\hline
\multicolumn{2}{|c|}
{$t=1$ ms. MCTS(1.5,3): 10, MCTS'(2,4): 9, Sym: 8}\\
\hline
Sym vs MCTS(1.5,3) & Sym vs MCTS'(2,4) \\
\hline
5094/4889(17), 3793/6182(25) & 4954/5037(9), 3792/6185(23)\\
\hline
\etp

\btp
\hline
\multicolumn{2}{|c|}
{$t=2$ ms. MCTS(1.5,3): 15, MCTS'(1,2): 14, Sym: 12}\\
\hline
Sym vs MCTS(1.5,3) & Sym vs MCTS'(1,2) \\
\hline
4478/5507(15), 4338/5628(34) &{\bf 4305/5681(14), 4444/5526(30)}\\
\hline
\etp

\btp
\hline
\multicolumn{2}{|c|}
{$t=4$ ms. MCTS(0.5,0.5): 25, MCTS'(2,4): 23, Sym: 19}\\
\hline
Sym vs MCTS(0.5,0.5) & Sym vs MCTS'(2,4) \\
\hline
{\bf 4058/5931(11), 5084/4887(29)}
&{\bf 4236/5757(7), 4607/5356(37)}\\
\hline
\etp

Here again, we observe that Sym wins when the amount of time per move is very small.

\subsection{Back to Pearl's game: rate of convergence}\label{rat}

Here we consider Pearl's game with $d=2$, $K$ even and with the value $p=(\sqrt 5 - 1)/2\simeq 0.618$. 
With this particular value of $p$, Pearl \cite{p} showed that $\E[R(r)]=p$ and that
the expected required number of visited leaves for AlphaBeta to determine $R(r)$ equals $(2/(\sqrt 5 -1))^{K}$.
Here $r$ is the true root of the game. 
In the whole subsection, we use SymP with the correct value of $a$, i.e. $a=p$.

First, we call $\tau_K$ the number of leaves that SymP needs to determine $R(r)$. 
Denoting by $\bar \tau_K$ the average value over $10000$ trials, we found

\begin{center}\begin{tabular}{|C{2.2cm}|C{2cm}|C{2cm}|C{2cm}|C{2cm}|}
\hline
$K$ & 4 & 8 & 12 & 16 \\
\hline
$\bar \tau_K$  & 6.84 & 47.14 & 323.51 & 2207.89 \\
\hline
$(2/(\sqrt 5 -1))^{K}$ & 6.8541 & 46.9787 & 321.9969 & 2206.9995\\
\hline\end{tabular}
\end{center}

I thus seems highly plausible that our algorithm visits the leaves in the same order as AlphaBeta
(for a Pearl game), up to some
random permutation. This is rather satisfying, since Tarsi \cite{tarsi} showed that for a Pearl game,
AlphaBeta is optimal in the sense of the expected number of leaves necessary to determine $R(r)$.

Finally, we plot a Monte-Carlo approximation (with $10000$ trials)
of $\E[(R_n(r)-R(r))^2]$ as a function of the number $n$ of iterations, when $K=8$ and $K=16$,
as well as one trajectory of $n\mapsto R_n(r)$ when $K=16$.

\begin{center}
\noindent\fbox{\begin{minipage}{0.9\textwidth}
\begin{center}
\includegraphics[width=6.5cm]{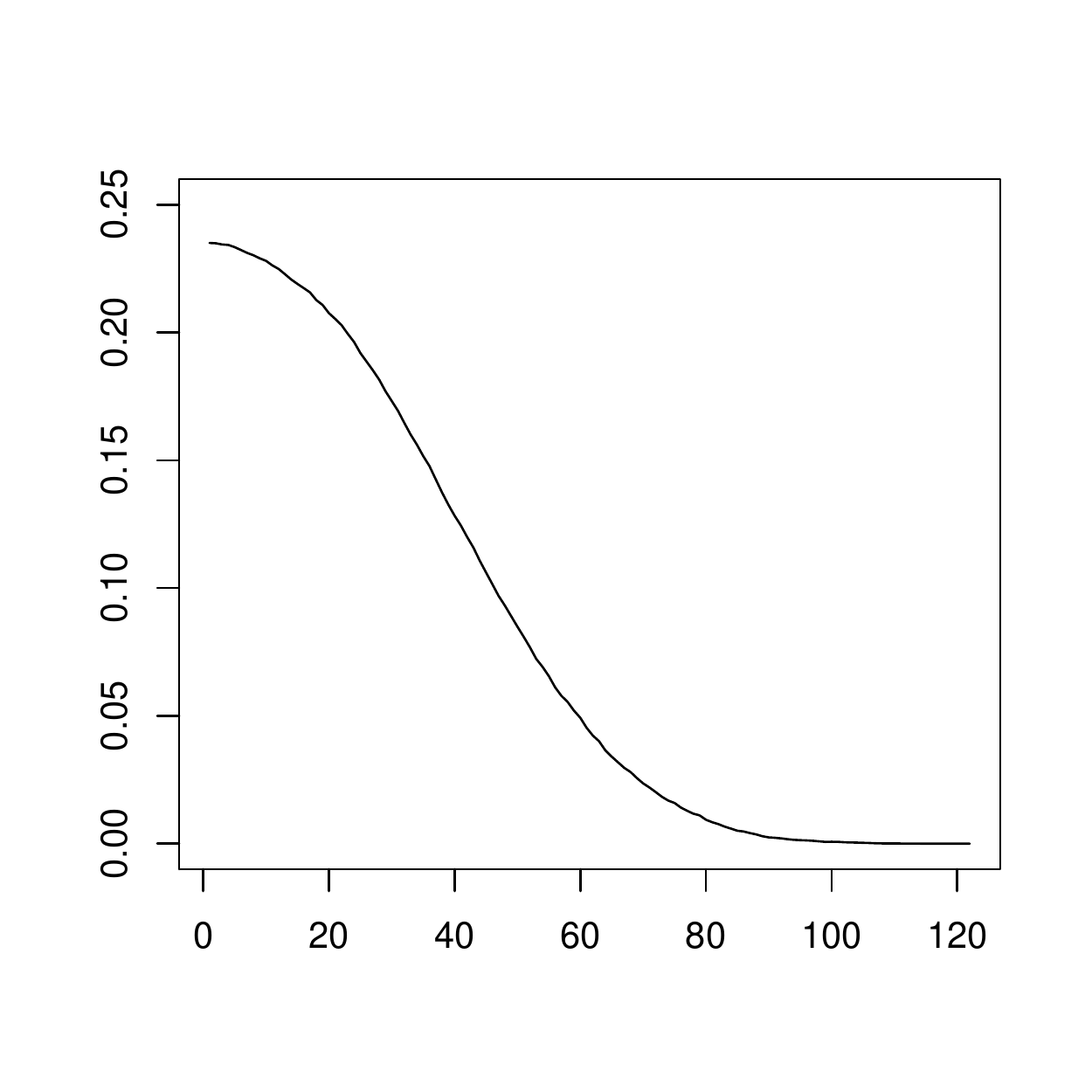}
\includegraphics[width=6.5cm]{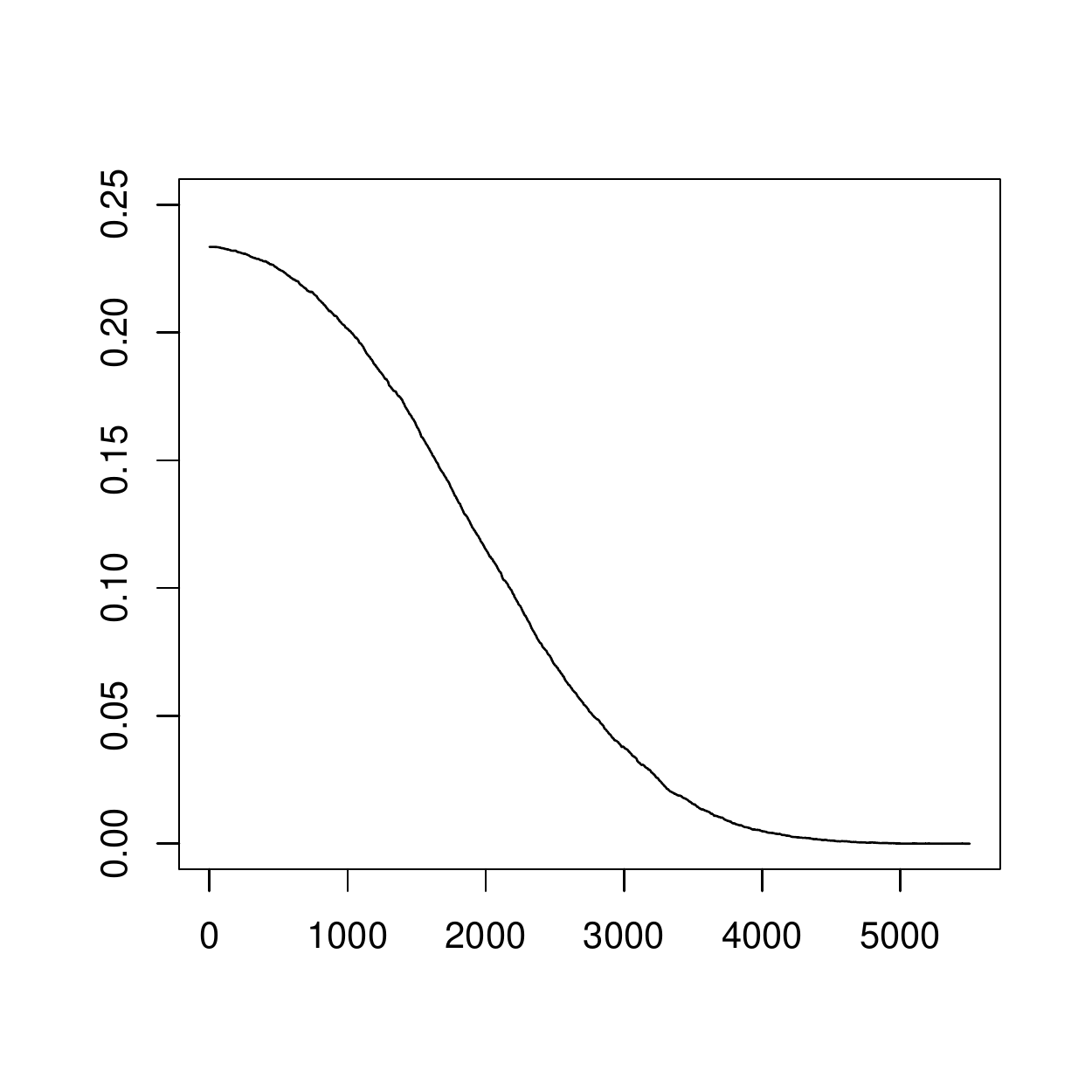}
\vskip-5.3cm $\left.\right.$ \hskip3.5cm $K=8$ \hskip5.5cm $K=16$ 
\vskip-1.5cm $\E[(R_n(r)-R(r))^2]$ as a function of the number $n$ of iterations.
\vskip4.9cm $\left.\right.$
\end{center}
\end{minipage}}
\end{center}

\begin{center}
\noindent\fbox{\begin{minipage}{0.9\textwidth}
\begin{center}
\includegraphics[width=6.5cm]{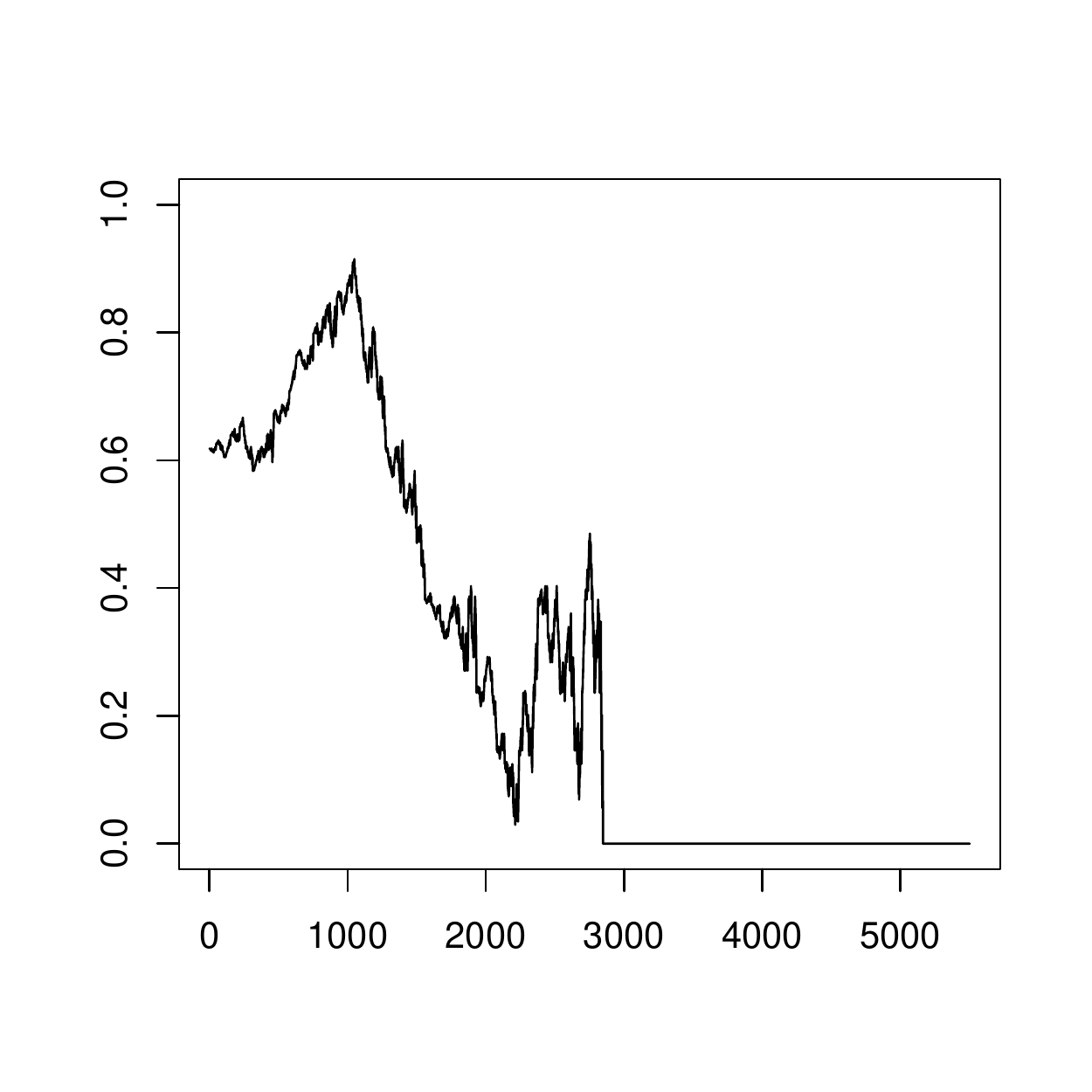}
\vskip-5.3cm $\left.\right.$ \hskip3.6cm $K=16$ 
\vskip-1.5cm One trajectory of $n \mapsto R_n(r)$.
\vskip4.9cm $\left.\right.$
\end{center}
\end{minipage}}
\end{center}

\subsection{Conclusion}\label{numconcl}

When playing Pearl's game, SymP beats MCTS and seems competitive against MCTS'.
This is reassuring, since our algorithms are typically designed for such games.

On true games, MCTS and MCTS' seem globally much better
than Sym. However, we found two situations where Sym may win.

The first and  most interesting situation is the one where 
the game is so large (or the amount of time so small) that very few iterations can be performed
by the challengers. This is quite natural, and there are two possible reasons for that.

\noindent $\bullet$ Our algorithm is only optimal {\it step by step}. So, 
it is absolutely not clear that it works well when we have enough time to handle many iterations.

\noindent $\bullet$ Assumption \ref{as} imposes some independence properties. While this is reasonable,
on any game, in some sense to be precised, 
after a small number of iterations (when playing a small number of uniformly
random matches, it is rather clear that the issues will be almost independent), this is clearly not the case
when performing a large number of well-chosen matches.

The second situation is that where the game is so small that we can hope to find the winning strategy
at the first move, and where Sym may find it before MCTS and MCTS'. As already mentioned,
we believe this is due to the fact that Sym does some pruning.

From a theoretical point of view, it would be very interesting to study more relevant models such as
the one proposed by Devroye-Kamoun \cite{dk}. Clearly, this falls completely out of our scope.
On the contrary, it does not seem completely desesperate to find empirically variants of
our algorithm that work much better in practise. For example, it may be relevant to use other choices 
of functions $m$ and $s$, to try a clever default policy, etc.

\section{Appendix: Monte Carlo Tree Search algorithms}\label{smcts}

In this subsection, we write down precisely the versions of the MCTS algorithm we used
to test our algorithm. We start with a modified version, more close to our study, where we do not throw down
any information.

\begin{algo}[Modified MCTS]\label{mctsinf}
Consider $\phi:\nn\times\nn\mapsto \rr$, e.g.
$\phi(w,c)=\frac{w+a}{c+b}$ for some $a,b>0$.

{\bf Step 1.} 
Simulate a uniformly random match from $r$, call $x_1$ the resulting leave and put $\bx_1=\{x_1\}$.

During this random match, keep track of $R(x_1)$, of $B_{\bx_1}=B_{rx_1}$ and of 
$\cD_{\bx_1}=\cup_{y\in B_{rx_1}}\cH_y$ and set $C_1(x)=W_1(x)=0$ for all $x\in\cD_{\bx_1}$.

For all  $x\in B_{x_1}$, set $C_1(x)=1$, $W_1(x)=R(x_1)$.

{\bf Step n+1.} Put $z=r$ and do
$z=\arg\!\max\{\phi(V_n(y),C_n(y)):y \in \cC_z\}$
until $z\in \bx_n\cup\cD_{\bx_n}$, where
$$ V_n(y)=\indiq_{\{t(f(y))=1\}} W_{n}(y)+\indiq_{\{t(f(y))=0\}}(C_{n}(y)-W_{n}(y)).$$

Set $z_n=z$.

(i) If $z_n\in \bx_n$ (this will almost never occur if $n$ is reasonable for a large game), set
$\bx_{n+1}=\bx_n$, $B_{\bx_{n+1}}=B_{\bx_n}$ and $\cD_{\bx_{n+1}}=\cD_{\bx_n}$

For all $x\in B_{rz_{n}}$, set $C_{n+1}(x)=C_n(x)+1$,  
$W_{n+1}(x)=W_n(x)+R({z_{n}})$.

For all $x\in (B_{\bx_{n+1}}\cup \cD_{\bx_{n+1}})\setminus B_{rz_n}$, set 
$C_{n+1}(x)=C_n(x)$, $W_{n+1}(x)=W_n(x)$.

(ii) Else (then $z_n\in\cD_{\bx_n}$), simulate a uniformly random match from $z_n$, call 
$x_{n+1}$ the resulting leave, set $\bx_{n+1}=\bx_n\cup\{x_{n+1}\}$.

During this random match, keep track of $R(x_{n+1})$, of 
$B_{\bx_{n+1}}=B_{\bx_n}\cup B_{rx_{n+1}}$ and of $\cD_{\bx_{n+1}}=(\cD_{\bx_n}\setminus \{z_n\}) \cup \bigcup_{y\in B_{z_nx_{n+1}}
\setminus \{z_n\}}\cH_y$ and set $R_{n+1}(x)=W_{n+1}(x)=0$ for all 
$x \in \bigcup_{y\in B_{z_nx_{n+1}}\setminus \{z_n\}}\cH_y$.

For all $x\in B_{rx_{n+1}}$, set $C_{n+1}(x)=C_n(x)+1$,  $W_{n+1}(y)=W_n(x)+R(x_{n+1})$.

Finally, set $C_{n+1}(x)=C_n(x)$, $W_{n+1}(x)=W_n(x)$ 
for all $x\in (B_{\bx_{n}}\cup \cD_{\bx_{n}})\setminus B_{rx_{n+1}}$.

{\bf Conclusion.} Stop after a given number of iterations $n_0$ 
(or after a given amount of time). As \emph{best child} of $r$, choose
$x_*=\arg\!\max\{\phi(V_{n_0}(x),C_{n_0}(x)) : x$ child of $r\}$.
\end{algo}

Algorithm \ref{mctsinf} updates the information on the whole visited branch at each new rollout. Here is a more 
standard version: it only creates, after each new simulation, one new node (together with its brothers)
and updates the information only on the branch from the root to this new node.
It seems clear that Algorithm \ref{mctsinf} should be better, but it may lead to 
memory problems if the game is very large.
We do not discuss such memory problems in the present paper.

\begin{algo}[MCTS]\label{mcts}
Consider  $\phi:\nn\times\nn\mapsto \rr$, e.g.
$\phi(w,c)=\frac{w+a}{c+b}$ for some $a,b>0$.

{\bf Step 1.} 
Simulate a uniformly random match from $r$, call $u$ the resulting leave.

During this random match, keep track of $R(u)$ and of $T_1=\{r\}\cup \cC_r$.

Set $C_1(x)=W_1(x)=0$ for all $x\in T_1 \setminus B_{ru}$.

Set $C_1(x)=1$ and $W_1(x)=R(u)$ for all $x\in T_1 \cap B_{ru}$.

{\bf Step n+1.} Put $z=r$ and do
$z=\arg\!\max\{\phi(V_n(y),C_n(y)):y \in \cC_z\}$
until $z\in L_{T_n}$, where
$$ V_n(y)=\indiq_{\{t(f(y))=1\}} W_{n}(y)+\indiq_{\{t(f(y))=0\}}(C_{n}(y)-W_{n}(y)).$$

Set $z_n=z$.

(i) If $z_n\in \cL$ (this will never occur if $n$ is reasonable for a large game), set
$T_{n+1}=T_n$.

For all
$x\in B_{rz_n}$, set $C_{n+1}(x)=C_n(x)+1$,  $W_{n+1}(x)=W_n(x)+R(z_n)$.

For all $x\in T_{n+1}\setminus B_{rz_n}$, set $C_{n+1}(x)=C_n(x)$, $W_{n+1}(x)=W_n(x)$.

(ii) If $z_n\notin\cL$, simulate a uniformly random match from $z_n$, call $u$ the resulting leave,
define $y$ as the child of $z_n$ belonging to $B_{z_nu}$ and set $T_{n+1}=T_n\cup\cC_{z_n}$.

For all $x\in B_{rz_{n}}$, set $C_{n+1}(x)=C_n(x)+1$,  $W_{n+1}(y)=W_n(x)+R(u)$.

Set $C_{n+1}(y)=1$,  $W_{n+1}(y)=\indiq_{\{R(u)=1\}}$ and, for all $x\in \cH_{y}$, set $C_{n+1}(x)=W_{n+1}(x)=0$.

For all $x \in T_{n}\setminus B_{rz_n}$, set $C_{n+1}(x)=C_n(x)$ and $W_{n+1}(x)=W_n(x)$.

{\bf Conclusion.} Stop after a given number of iterations $n_0$ 
(or after a given amount of time). As \emph{best child} of $r$, choose
$x_*=\arg\!\max\{\phi(V_{n_0}(x),C_{n_0}(x)) : x$ child of $r\}$.
\end{algo}

Of course, in both algorithms, the choice of the function $\phi$ is debatable.
The choice $\phi(w,c)=(w+a)/(c+b)$, with $a>0$ and $b>0$ chosen empirically,
seems to be a very good choice and was proposed by Lee et al \cite[Subsection II-B-1]{teytetal}.

Algorithm \ref{mctsinf} is not admissible in the sense of 
Definition \ref{proc} because it may take different decisions with the same information. Indeed,
it might visit twice the same leave consecutively:
this does not modify the information but changes the values of $W_n$ and $C_n$.
However, it is \emph{almost} admissible: 
it would suffice to forbid two visits at the same leave
(or alternatively to set $C_{n+1}(x)=C_n(x)$ and $W_{n+1}(x)=W_n(x)$ for all 
$x\in B_{\bx_n}\cup\cD_{\bx_n}$ in the case where $x_{n+1}\in\bx_n$) to make it admissible.
Since such a double visit almost never happens in practice, we decided not to complicate
the definition of admissible algorithms nor to modify Algorithm \ref{mctsinf}.

Algorithm \ref{mcts} is not admissible, because it has not the good structure
(it does not keep track of the whole observed information),
but we see it as an truncated version of Algorithm \ref{mctsinf},
which is itself almost admissible.

Observe that in Algorithm \ref{mctsinf}, $C_n(x)$ is the number of times (iterations)
where the node $x$ has been crossed and $W_n(x)$ is the number of times where
$x$ has been crossed and where this has led to a victory of $J_1$, 
all this after $n$ matches. The $(n+1)$-th match is as follows:
we start from the root and make $J_1$ play the most promising move
(for itself, i.e. the child with the highest $\phi(W_n,C_n)$) and $J_0$ play
the most promising move (for itself, i.e. the child with the highest $\phi(C_n-W_n,C_n)$)
until we reach an uncrossed position $z_n \in \cD_{\bx_n}$. From there,
we end the match at uniform random, until we arrive at some leave $u$.
We finally update the explored tree as well as its boundary and the values
of the numbers of crosses and of victories of each node of the branch from $r$ to $u$.


\begin{thebibliography}{99}

\bibitem{a}{{\sc B. Abramson}, {\it Expected-Outcome: A General Model of Static
Evaluation}, IEEE Trans. Pattern Anal. Mach. Intell. 12 (1990), 182--193.}

\bibitem{akdn}{{\sc T. Ali Khan, L. Devroye, R. Neininger}, {\it A limit law for the root value of minimax trees,}
Electron. Comm. Probab. 10 (2005), 273--281.}

\bibitem{acbf}{{\sc P. Auer, N. Cesa-Bianchi, P. Fischer}, {\it Finite-time Analysis
of the Multiarmed Bandit Problem,} Mach. Learn. 47 (2002), 235--256.}

\bibitem{bs}{{\sc E.B. Baum, W.D. Smith}, {\it A Bayesian approach to relevance in game playing,}
Artificial Intelligence 97 (1997), 195--242.}

\bibitem{bcb}{{\sc S. Bubeck, N. Cesa-Bianchi}, {\it Regret analysis of stochastic and nonstochastic 
multi-armed bandit problems}, Foundations and Trends in Machine Learning 5 (2012), 1--122.}

\bibitem{bmp}{{\sc L. Bu\c soniu, R. Munos, E. P\'all}, {\it An analysis of optimistic, 
best-first search for minimax sequential decision making}, IEEE International Symposium on Approximate 
Dynamic Programming and Reinforcement Learning, 2014.}

\bibitem{mctsurvey}{{\sc C. Browne, E. Powley, D.  Whitehouse, S. Lucas, P.I. Cowling, P. Rohlfshagen,
S. Tavener, D. Perez, S. Samothrakis, S. Colton,} {\it A Survey of Monte Carlo Tree Search Methods},
IEEE transactions on computantional intelligence and AI in games 4 (2012), 1--43.}

\bibitem{cbsetc}{{\sc G.M.J.B. Chaslot, S. Bakkes, I. Szita, P. Spronck},
{\it Monte-Carlo Tree Search: A New Framework for Game AI}, 
in Proc. Artif. Intell. Interact. Digital Entert. Conf., Stanford Univ.,
California, 2008, 216--217.}

\bibitem{cwhetc}{{\sc G.M.J.B. Chaslot, M.H.M. Winands, H.J. van den Herik,
J.W.H.M. Uiterwijk, B. Bouzy}, {\it Progressive Strategies for Monte-Carlo Tree Search,} 
New Math. Nat. Comput. 4 (2008), pp. 343--357.}

\bibitem{cm}{{\sc P.A. Coquelin, R. Munos}, Bandit algorithms for tree search. Technical report, 
INRIA RR-6141, 2007.}

\bibitem{c}{{\sc R. Coulom}, {\it Efficient Selectivity and Backup Operators in
Monte-Carlo Tree Search}, in Proc. 5th Int. Conf. Comput. and
Games, Turin, Italy, 2006, pp. 72--83.}

\bibitem{dk}{{\sc L. Devroye, O. Kamoun}, {\it Random minimax game trees}, 
Random discrete structures (Minneapolis, MN, 1993), 55--80, IMA Vol. Math. Appl., 76, Springer, New York, 1996.}

\bibitem{gkk}{{\sc A. Garivier, E. Kaufmann, W.M. Koolen}, {\it Maximin Action Identification: 
A New Bandit Framework for Games}, JMLR: Workshop and Conference Proceedings vol 49, 1--23, 2016.}

\bibitem{mogo}{{\sc S. Gelly, Y. Wang, R. Munos, O. Teytaud}, {\it Modification of
UCT with Patterns in Monte-Carlo Go,} Inst. Nat. Rech. Inform. Auto. (INRIA), Paris, Tech. Rep., 2006.}

\bibitem{g}{{\sc M.L. Ginsberg,} {\it GIB: Imperfect Information in a Computationally Challenging Game,} 
J. Artif. Intell. Res. 14 (2001), 303--358.}

\bibitem{gk}{{\sc D. Golovin, A. Kraus}, {\it Adaptive submodularity: theory and applications in active 
learning and stochastic optimization}, J. Artificial Intelligence Res. 42 (2011), 427--486.}

\bibitem{ks}{{\sc L. Kocsis, C. Szepesv\'ari}, {\it Bandit based Monte-Carlo planning}, 
Machine learning: ECML 2006, 282--293,
Lecture Notes in Comput. Sci., 4212, Springer, Berlin, 2006.}

\bibitem{teytetal}{{\sc C.S. Lee, M.H. Wang, G.M.J.B. Chaslot, J.B. Hoock, A. Rimmel, O. Teytaud, S.R. Tsai, 
S.C. Hsu, T.P. Hong}, {\it The Computational Intelligence of MoGo Revealed in Taiwan’s
Computer Go Tournaments,} IEEE Trans. Comp. Intell. AI Games 1 (2009), 73--89.}

\bibitem{mlivre}{{\sc R. Munos}, {\it From bandits to Monte-Carlo Tree Search: The optimistic principle 
applied to optimization and planning}, Foundations and Trends in Machine Learning (Book 21),
Now Publishers Inc, 146 pp, 2014.}

\bibitem{p}{{\sc J. Pearl}, {\it Asymptotic properties of minimax trees and game-searching procedures},
Artificial Intelligence 14 (1980), 113--138.}

\bibitem{s}{{\sc B. Sheppard,} {\it World-championship-caliber Scrabble}, Artif.
Intell. 134 (2002), 241–275.}


\bibitem{alphago}{{\sc D. Silver, A. Huang, C.J. Maddison, A. Guez, L. Sifre, G. van den Driessche,
J. Schrittwieser, I. Antonoglou, V. Panneershelvam, M. Lanctot, S. Dieleman, D. Grewe,
J. Nham, N. Kalchbrenner, I. Sutskever, T. Lillicrap, M. Leach, K. Kavukcuoglu,
T. Graepel, D. Hassabis}, {\it Mastering the game of Go with deep
neural networks and tree search}, Nature 529 (2016), 484--489.}

\bibitem{tarsi}{{\sc M. Tarsi}, {\it Optimal search on some game trees,} 
J. Assoc. Comput. Mach. 30 (1983), no. 3, 389--396.}

\bibitem{trs}{{\sc G. Tesauro, V.T. Rajan, R. Segal}, {\it Bayesian Inference in Monte-Carlo Tree Search},
UAI'10 Proceedings of the Twenty-Sixth Conference on Uncertainty in Artificial Intelligence, 2010, 580--588.}


\end{thebibliography}
\end{document}